\newcommand{\ar}{\mathrm{r}}
\renewcommand{\tocsection}[3]{%
  \indentlabel{\@ifnotempty{#2}{\bfseries\ignorespaces#1 #2\quad}}\bfseries#3}
\renewcommand{\tocsubsection}[3]{%
  \indentlabel{\@ifnotempty{#2}{\ignorespaces#1 #2\quad}}#3}
\newcommand\@dotsep{4.5}
\def\@tocline#1#2#3#4#5#6#7{\relax
  \ifnum #1>\c@tocdepth % then omit
  \else
    \par \addpenalty\@secpenalty\addvspace{#2}%
    \begingroup \hyphenpenalty\@M
    \@ifempty{#4}{%
      \@tempdima\csname r@tocindent\number#1\endcsname\relax
    }{%
      \@tempdima#4\relax
    }%
    \parindent\z@ \leftskip#3\relax \advance\leftskip\@tempdima\relax
    \rightskip\@pnumwidth plus1em \parfillskip-\@pnumwidth
    #5\leavevmode\hskip-\@tempdima{#6}\nobreak
    \leaders\hbox{$\m@th\mkern \@dotsep mu\hbox{.}\mkern \@dotsep mu$}\hfill
    \nobreak
    \hbox to\@pnumwidth{\@tocpagenum{\ifnum#1=1\bfseries\fi#7}}\par% <-- \bfseries for \section page
    \nobreak
    \endgroup
  \fi}
\renewcommand\csname r@tocindent0\endcsname{0pt}
\def\l@subsection{\@tocline{2}{0pt}{2.5pc}{5pc}{}}
\begin{document}

\title[A study guide to ``Kaufman and Falconer estimates"]{A study guide to ``Kaufman and Falconer estimates \\ for radial projections"}
\author[P.~Bright]{Paige Bright}
\address{Department of Mathematics \\ Massachusetts Institute of Technology, 77 Massachusetts Avenue \\ Cambridge, MA, USA 02139-4307}
\email{paigeb@mit.edu}
\author[R.~Bushling]{Ryan Bushling}
\address{Department of Mathematics \\ University of Washington, Box 354350 \\ Seattle, WA, USA 98195-4350}
\email{reb28@uw.edu}
\author[C.~Marshall]{Caleb Marshall}
\address{Department of Mathematics \\ University of British Columbia, 1984 Mathematics Road \\ Vancouver, BC, Canada V6T 1Z2}
\email{cmarshall@math.ubc.ca}
\author[A.~Ortiz]{Alex Ortiz}
\address{Department of Mathematics \\ Massachusetts Institute of Technology, 77 Massachusetts Avenue \\ Cambridge, MA, USA 02139-4307}
\email{aeortiz@mit.edu}

\subjclass[2020]{Primary: 28-02, 42-02; Secondary: 28A78}
\keywords{Exceptional sets, Hausdorff dimension, Incidence geometry}

\begin{abstract}
    This expository piece expounds on the major themes and clarifies technical details of the paper ``Kaufman and Falconer estimates for radial projections and a continuum version of Beck's theorem" of Orponen, Shmerkin, and Wang.
\end{abstract}

\maketitle

\vs{-0.15}
\parskip=0.1em
\tableofcontents \vs{-1.2}
%\rule{\textwidth}{0.75pt}
%\addtocontents{toc}{\rule{\textwidth}{0.75pt}\par}
\parskip=0.75em

\section{Introduction} \label{s:intro}

In 2022, Tuomas Orponen, Pablo Shmerkin, and Hong Wang wrote the article 
\begin{center}
    ``Kaufman and Falconer estimates for radial projections \\ and a continuum version of Beck's theorem"\footnote{This study guide is based on the \textit{arXiv} version of this paper (which is cited in this document). Recently, this paper has appeared in the \textit{Journal of Geometric and Functional Analysis} (2024): 1--38 \cite{orponen2024kaufman}.}
\end{center}
(henceforth referred to as ``OSW"). The authors applied $\epsilon$-improvements to the Furstenberg set problem and incidence estimates for balls and tubes in the plane to a bootstrapping scheme to prove lower bounds for radial projections that are sharp in some regimes. Here, we define \emph{radial projections} via the maps $\pi_x\colon \R^n \setminus \{x\} \to \mathbb{S}^{n-1}$ where 
\[
\pi_x(y) := \frac{y-x}{|y-x|}, \qquad y\in \R^n \setminus \{x\}.
\]

Their main results in the plane are the ``Kaufman-type'' theorem
\begin{thm}[\cite{orponen2022kaufman} Theorem 1.1] \label{thm:mainthm1.1}
    Let $X\subset \R^2$ be a (non-empty) Borel set which is not contained on any line. Then, for every Borel set $Y\subset \R^2$, 
    \[
    \sup_{x\in X} \dim \pi_x(Y\setminus \{x\}) \geq \min \{\dim X, \dim Y, 1\}.
    \]
\end{thm}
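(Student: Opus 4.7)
The plan is to argue by contradiction. Suppose there exist $s < \min\{\dim X, \dim Y, 1\}$ and a Borel set $Y$ witnessing $\sup_{x \in X} \dim \pi_x(Y \setminus \{x\}) < s$. I would select Frostman measures $\mu$ on $X$ and $\nu$ on $Y$ with exponents slightly above $s$, and then exploit the hypothesis that $X$ is not contained in any line by locating (via pigeonholing) a triangle of positive $\mu$-density, or equivalently, a subset $X_0 \subset X$ of nearly full dimension whose points are not simultaneously collinear. This is the only place the non-degeneracy hypothesis on $X$ enters, but it enters quantitatively.

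The core reduction is to recast the small-projection hypothesis as a statement about Furstenberg-like incidences. For each $x \in X$, the bound $\dim \pi_x(Y \setminus \{x\}) \leq s$ means that $Y \setminus \{x\}$ is essentially covered by lines through $x$ whose directions form a set of dimension $\leq s$. At a discretization scale $\delta > 0$, this translates into a family of $\delta$-tubes through $x$ that cover the $\delta$-thickening of $Y$. Summing over $x$ in a large subset of $X_0$, one obtains a Furstenberg-type configuration: many $\delta$-balls drawn from $Y$, each lying in many $\delta$-tubes, with the tubes passing through many viewpoints drawn from $X_0$.

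The next step is to apply the Furstenberg $\epsilon$-improvement together with the sharpened ball--tube incidence bounds in the plane alluded to in the introduction. When $s$ is strictly below $1$, these estimates show that the number of $\delta$-incidences in such a configuration is strictly smaller than the naive bound by a factor $\delta^{-\epsilon(s)}$ with $\epsilon(s) > 0$. Inserting this into the incidence count from the previous paragraph forces either $\dim X \leq s$, $\dim Y \leq s$, or the structural degeneracy that (most of) $X$ lies on a single line, each of which contradicts the hypotheses. To close the gap between whatever initial lower bound a single application yields and the sharp target $\min\{\dim X, \dim Y, 1\}$, one bootstraps: iteratively feed the improved lower bound on $\sup_{x \in X} \dim \pi_x(Y \setminus \{x\})$ back into the argument, gaining $\epsilon(s_k) > 0$ at the $k$-th step and driving $s_k$ up to the target.

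The main obstacle I anticipate is the bootstrap bookkeeping. One must ensure that the $\epsilon$-improvement is uniform in the current dimension $s_k$ (otherwise the iterates stall), and that the pigeonholing over the many scales entering the Furstenberg/incidence estimates does not erode the Frostman exponents of $\mu$ or $\nu$ as the induction proceeds. A secondary subtlety is the conversion of the non-line hypothesis on $X$ from its topological form into the robust quantitative form actually used at the incidence step; concretely, one needs a Frostman measure on $X$ whose support is not too close to any single line at the relevant scales, which requires an extra layer of pigeonholing on top of the one used to produce $X_0$.
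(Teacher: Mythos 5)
Your high-level plan---Frostman measures, discretize to a Furstenberg-type configuration, invoke the $\epsilon$-improved Furstenberg bound, then bootstrap and worry about uniformity---is aimed in the right direction and correctly anticipates the crucial role of uniformity of $\epsilon$. However, there are several genuine gaps in how you would carry it out, and one of them is structural.

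The structural gap is the bootstrap mechanism itself. You propose to ``iteratively feed the improved lower bound on $\sup_x \dim\pi_x(Y\setminus\{x\})$ back into the argument,'' but in your setup the Furstenberg configuration is built only from the \emph{upper} bound hypothesis $\dim\pi_x(Y\setminus\{x\}) < s$. Nothing in your sketch explains how the \emph{current} lower bound $s_k$ is actually used in constructing the next configuration, so a single application yields some $s_1$ and then the process stalls---each subsequent application starts from scratch. The paper solves exactly this problem with the notion of \emph{$\sigma$-thin tubes} (Definition~\ref{defn:thin-tubes-2}): the assertion that $(\mu,\nu)$ has $(\sigma,K,c)$-thin tubes supplies a quantitative Frostman-type \emph{upper} bound $\nu(T\cap G|_x)\le K r^\sigma$ on tube masses, and it is precisely this upper bound that certifies the ``bad'' tube families $\mathcal T_x$ as $(r,\sigma,\cdot)$-sets---a \emph{hypothesis} of the discretized Furstenberg Theorem~\ref{thm:furstenberg}. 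So $\sigma$-thin tubes is simultaneously the input (via the $(r,\sigma)$-set condition) and the target to improve (to $(\sigma+\eta)$-thin tubes), and carrying the triple $(\sigma,K,c)$ forward is what makes the iteration run. Without an analogous carrier, your contradiction argument has no way to convert an established lower bound into usable structure at the next step.

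Second, you have no base case. The bootstrap must start at some strictly positive $\sigma$, and this is not a consequence of the Furstenberg $\epsilon$-improvement. The paper outsources the base case (Lemma~\ref{lem:prop2.4}, Shmerkin--Wang Theorem B.1), which says that a pair of $s$-Frostman measures has $\beta$-thin tubes for some $\beta(s)>0$ provided $\nu$ gives small mass to $\delta$-tubes. Note also that this hypothesis is about $Y$ not being tube-concentrated, which is not the same as your condition on $X$ (``triangle of positive $\mu$-density''). The actual role of the non-line hypothesis on $X$ is handled in the paper through the case analysis of Corollary~\ref{cor:cor2.21}: if $\spt\nu$ lies on a line $\ell$, project about a point of $X\setminus\ell$; if $\mu(\ell)>0$ but $\nu(\ell)<1$, a Kaufman-style energy computation works; only when both measures vanish on lines does the bootstrap machinery kick in.

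Third, the Furstenberg $\epsilon$-improved lower bound, by itself, does not give the contradiction. It produces a lower bound on the number of tubes, but closing the argument requires a competing \emph{upper} bound, and that requires a dichotomy. In Lemma~\ref{lem:lem2.8}, tubes are split into ``concentrated'' and ``non-concentrated'' families; the Furstenberg bound applies only to the non-concentrated case and is contradicted by a two-ends counting argument (pairs of well-separated points in a tube pin down its direction to within $\delta/\delta^{\kappa}$). The concentrated case is handled by an entirely different annulus-density argument against the Frostman condition on $\nu$, and uses the \emph{other} thin-tubes assumption (on $(\nu,\mu)$). Your sketch omits this dichotomy entirely.

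A smaller point: you mix the $\epsilon$-improved Furstenberg set bound (Orponen--Shmerkin, Theorem~\ref{thm:furstenberg}) with the ``sharpened ball--tube incidence bounds.'' The latter refers to the Fu--Ren incidence theorem, which is the engine for the Falconer-type Theorem~\ref{thm:mainthm1.2}, not for Theorem~\ref{thm:mainthm1.1}; the two theorems are genuinely different black boxes applied to different bootstraps.
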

and the Falconer-type theorem
\begin{thm}
    [\cite{orponen2022kaufman} Theorem 1.2]\label{thm:mainthm1.2}
    Let $X,Y\subset \R^2$ be Borel sets with $X\neq \varnothing$ and $\dim Y>1$. Then, 
    \[
    \sup_{x\in X} \dim \pi_x(Y\setminus \{x\}) \geq \min \{\dim X+ \dim Y-1, 1\}.
    \]
\end{thm}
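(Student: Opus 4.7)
The plan is to bootstrap Theorem~\ref{thm:mainthm1.1} using the excess $\dim Y - 1 > 0$. Fix $s < \dim X$ and $t < \dim Y$ with $t > 1$, and select Frostman measures $\mu$ on $X$ and $\nu$ on $Y$ of exponents $s$ and $t$ respectively. I aim to show
\[
\sup_{x\in X}\dim\pi_x(Y\setminus\{x\}) \geq \min\{s+t-1,\,1\},
\]
from which the theorem follows by letting $s\nearrow \dim X$ and $t\nearrow \dim Y$.

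First I would dispose of the case in which $X$ lies on a single line $\ell$. Radial projections from points $x\in\ell$ are parameterized by angle, and since $t > 1$, a standard $L^2$-energy computation in the spirit of Marstrand (or a direct radial projection theorem for Frostman measures of exponent greater than $1$) shows $\dim\pi_x(Y) = 1$ for all $x\in\ell$ outside a set of controlled dimension, which closes the line case modulo a separate treatment of the degenerate configuration $X\subset Y\cap\ell$.

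When $X$ is not contained on a line, Theorem~\ref{thm:mainthm1.1} provides the base estimate $\sup_{x\in X}\dim\pi_x(Y\setminus\{x\}) \geq \min\{s,t,1\}$, which falls short of the target $\min\{s+t-1,1\}$ by up to $t-1$ when $s < 1$. To close the gap I would run a discretization-and-bootstrap scheme. Assume toward contradiction that $\sup_{x\in X} \dim\pi_x(Y\setminus\{x\}) < \sigma$ for some $\sigma < \min\{s+t-1,1\}$, and discretize at scale $\delta > 0$: for each $x\in X$, the cover of $\pi_x(Y)$ by $\lesssim \delta^{-\sigma}$ arcs of length $\delta$ pulls back through $x$ to a family of $\delta$-tubes through $x$ capturing most of the $\nu$-mass of $Y$. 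A contradiction then follows from an incidence estimate for balls and tubes in the plane, asserting that a $(t,C)$-Frostman measure with $t > 1$ cannot be too concentrated on a sparse family of tubes anchored at an $s$-Frostman set, combined with the $\epsilon$-improvement for Furstenberg sets that drives the bound past $\min\{s,t,1\}$.

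The main obstacle is calibrating the bootstrap: the Furstenberg and tube-incidence inputs each produce only an $\epsilon$-gain per iteration, and these gains must accumulate to exactly the full deficit $t-1$ rather than dissipating into error terms. This is the technical heart of the argument and is precisely why the sharp Furstenberg set estimates and the planar point-tube incidence bounds — the two inputs flagged in the introduction of OSW — are essential, rather than the cruder Marstrand-type exceptional set bounds.
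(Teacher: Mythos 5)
Your high-level sketch captures the right flavor (discretize, find ``bad'' tube families, feed them into a planar incidence estimate, bootstrap), but several structural choices diverge from OSW's proof and one of them opens a genuine gap.

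First, the base case. You propose to start from Theorem~\ref{thm:mainthm1.1}, which forces you to split into ``$X$ on a line'' vs.\ ``$X$ not on a line'' since Theorem~\ref{thm:mainthm1.1} requires the latter. OSW's proof of Theorem~\ref{thm:mainthm1.2} does not use Theorem~\ref{thm:mainthm1.1} at all, and does not need any dichotomy on $X$: because $\nu$ is $t$-Frostman with $t>1$, a one-line covering argument (cover an $r$-tube by $\sim r^{-1}$ balls of radius $r$) shows that $(\mu,\nu)$ already has $(t-1)$-thin tubes for \emph{any} $\mu$, even if $X$ is a single point on a line (Remark~\ref{rem:start thin tubes}). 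This is the base case, and it is elementary. By contrast, your line case is under-specified: a Marstrand-type energy computation shows the set $\{x:\dim\pi_x(Y)<1\}$ has dimension at most $2-\dim Y$, which only beats $\dim X$ when $\dim X+\dim Y>2$, precisely when the target is already $1$. When $\dim X+\dim Y-1<1$, you would need a sharper exceptional set bound of Falconer type (essentially Theorem~\ref{thm:bright thm1} in the plane), which is tantamount to the result you are trying to prove. So the line case does not close as written.

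Second, the driving incidence input. You invoke both ``an incidence estimate for balls and tubes'' and ``the $\epsilon$-improvement for Furstenberg sets.'' The Furstenberg $\epsilon$-improvement (Theorem~\ref{thm:furstenberg}) is the engine for Theorem~\ref{thm:mainthm1.1} and plays no role in the proof of Theorem~\ref{thm:mainthm1.2}. For Theorem~\ref{thm:mainthm1.2}, the bootstrap (Lemma~\ref{lem:falconer epsilon improvement}) is driven entirely by the Fu--Ren incidence theorem (Theorem~\ref{thm:fu-ren}): if $P_X$ is an $(r,s)$-set, $P_Y$ an $(r,t)$-set, and through each $x\in P_X$ there is an $(r,\sigma)$-set of tubes each capturing $\geq r^{\sigma+\eta}|P_Y|$ points of $P_Y$, then $\sigma\geq s+t-1-\zeta$. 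The dependence on $t$ here is essential: the Furstenberg lower bound of $2\sigma+\epsilon$ alone cannot produce the $s+t-1$ endpoint because it does not see $t$. Assuming $(\mu,\nu)$ has $\sigma$- but not $(\sigma+\eta)$-thin tubes for $\sigma<s+t-1$ produces exactly this discretized configuration and contradicts Fu--Ren. The key technical point you gesture at, that the gain $\eta$ must not dissipate, is handled because $\eta=\eta(s,t,\sigma)$ can be taken uniformly bounded below on compact subsets of $\{(s,t,\sigma):\sigma<\min\{s+t-1,1\}\}$, so finitely many iterations suffice.

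In short: the thin-tubes bootstrap framework is the right one, but you should (i) drop the appeal to Theorem~\ref{thm:mainthm1.1} and the line dichotomy and instead use the trivial $(t-1)$-thin tubes base case, and (ii) replace the Furstenberg $\epsilon$-improvement with the Fu--Ren ball-tube incidence theorem as the sole driver of the bootstrap.
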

In the important special case where $X = Y$, Theorem \ref{thm:mainthm1.1} says that there must be a point in $X$ such that the radial projection $\pi_x(X\setminus\{x\})$ has dimension as large as possible, $\min\{\dim X, 1\}$.

% Theorem \ref{thm:mainthm1.1} also yields a satisfactory continuum analog of Beck's theorem in incidence geometry. Theorems \ref{thm:mainthm1.1} and 
% % (see \S\ref{ss:becks theorem}).

In this study guide, we discuss the proofs of these two theorems in OSW's paper and hope to highlight the flexibility of the authors' device of ``thin tubes.'' In the process we will clarify certain (we think) key details in the arguments we found helpful while reading. We also describe some of the background on radial projections and the Furstenberg set problem as necessary to underline how small but meaningful improvements to this important problem contribute not only to small improvements in radial projections, but actually sharp radial projection theorems.

Finally, we will end by briefly exploring ongoing work in the area of radial projections.

\begin{rem}[Notation]
    Throughout we use the convention if $A, B$ are positive quantities, $A \lesssim B$ if $A \le CB$ for an absolute constant $C > 0$, and $A\sim B$ if both $A\lesssim B$ and $B\lesssim A$ hold.
\end{rem}

\subsection{Beck's theorem} \label{ss:becks theorem}

As a corollary of Theorem \ref{thm:mainthm1.1}, the authors obtained a continuum version of Beck's theorem from 1983, which states the following:

\begin{thm}[\cite{beck1983lattice} Beck's theorem]\label{becktheorem}
    Let $P \subset \R^2$ be a finite set of $N>1$ points in the plane. Then either 
    \begin{enumerate}[label={\normalfont \textbf{\arabic*.}},topsep=0pt]
        \item There is a line containing about $N$ many points, or
        \item $P$ spans about $N^2$ many lines.
    \end{enumerate}
\end{thm}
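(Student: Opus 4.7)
The plan is to deduce Beck's theorem from the Szemerédi--Trotter incidence bound
\[
I(P,L) \lesssim (|P|\,|L|)^{2/3} + |P| + |L|,
\]
applied dyadically to the family of lines spanned by $P$ grouped by how many points of $P$ each line contains. Assume for contradiction that neither alternative holds: no line contains more than $N/C$ points of $P$, and $P$ spans fewer than $cN^2$ lines, with $C$ large and $c$ small to be chosen.

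First I would, for each integer $k \geq 2$, let $L_k$ denote the set of lines meeting $P$ in between $k$ and $2k$ points. Szemerédi--Trotter, combined with the trivial lower bound $k|L_k| \leq I(P, L_k)$, yields
\[
|L_k| \lesssim \frac{N^2}{k^3} + \frac{N}{k}.
\]
Since each line in $L_k$ accounts for at most $\binom{2k}{2} \sim k^2$ pairs of collinear points, the number of point-pairs lying on \emph{some} line of $L_k$ is $\lesssim N^2/k + Nk$.

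Next I would fix a large constant $k_0$ and sum these bounds over dyadic scales $k_0 \leq k \lesssim N/C$: the first term contributes a convergent geometric tail of order $N^2/k_0$, while the second contributes $\lesssim N^2/C$. For $k_0$ and $C$ chosen large, the total count of point-pairs sharing a line of richness at least $k_0$ is at most $\tfrac{1}{10}\binom{N}{2}$. Consequently a positive fraction of the $\binom{N}{2}$ point-pairs must lie on lines with fewer than $k_0$ points. Since any such line absorbs at most $\binom{k_0}{2}$ pairs, $P$ must span $\gtrsim N^2/k_0^2 \sim N^2$ distinct lines, contradicting the second hypothesis.

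The main obstacle is organizing the dyadic sum to avoid a logarithmic loss: naively summing $\sum_{k \geq 2} N^2/k$ produces an extraneous $\log N$ factor, so one must separate the ``medium'' richness regime $2 \leq k < k_0$ (which cheaply yields $\sim N^2$ lines via the final counting step) from the tail $k \geq k_0$ (where Szemerédi--Trotter does the heavy lifting and the geometric series closes). Beyond this bookkeeping, the Szemerédi--Trotter theorem itself is used as a black box.
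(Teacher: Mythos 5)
Your proof is correct, and its overall skeleton matches the paper's Appendix~\ref{s:Beck}: decompose the spanned lines dyadically by richness, bound each dyadic layer via an incidence theorem combined with the trivial lower bound $r|\mathcal L_r|\le |I(P,\mathcal L_r)|$, count pairs per layer, close a geometric series, and conclude that if no line is very rich then most pairs lie on low-richness lines, each of which absorbs $O(1)$ pairs. The place where you genuinely diverge from the paper is the choice of incidence input. You invoke the full Szemer\'edi--Trotter bound
\[
I(P,\mathcal L)\lesssim (|P|\,|\mathcal L|)^{2/3}+|P|+|\mathcal L|,
\]
which gives the cleaner layer estimate $|L_k|\lesssim N^2/k^3 + N/k$, whereas the paper deliberately uses only an $\epsilon$-improvement of the elementary Cauchy--Schwarz incidence bound,
\[
|I(P,\mathcal L)| \lesssim m + n + m^{1/2+\epsilon}\,n^{1-2\epsilon},
\]
which interpolates between Cauchy--Schwarz ($\epsilon=0$) and Szemer\'edi--Trotter ($\epsilon=1/6$) and yields the weaker, but still summable, layer estimate $|T_r|\lesssim n^2/r^{2\epsilon}+nr$. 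Both inputs make the geometric tail converge, so both proofs are valid; your version is slightly tighter quantitatively. What your route gives up is exactly the point the appendix is trying to illustrate: Beck's theorem survives on an arbitrarily small $\epsilon$-gain over the trivial incidence bound, in precise parallel with the way OSW's Theorem~\ref{thm:mainthm1.1} survives on Orponen--Shmerkin's $\epsilon$-improvement over Wolff's Furstenberg bound rather than the sharp Furstenberg estimate. Treating Szemer\'edi--Trotter as a black box is fine for proving Beck, but it erases the ``an $\epsilon$ suffices'' moral that motivates this appendix's placement in the study guide.
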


A reasonable continuum analog for Beck's theorem is that either \textbf{1.}~$X$ contains a lot of ``mass'' on some line $\ell$, or \textbf{2.}~the line set $\mathcal L(X)$ of lines spanned by $X$ has dimension as large as possible, namely $2$ if $\dim X > 1$ and $2\dim X$ otherwise.

\begin{rem}
    Note that $N$ points in $\R^2$ in generic position span $\binom{N}{2} \sim N^2$ many lines. In this sense, $2\dim X$ is the continuum analog of the bound $N^2$ from Beck's theorem.
\end{rem}

OSW are able to prove that the natural continuum analog of Beck's theorem is true. 

\begin{thm}[\cite{orponen2022kaufman} Corollary 1.3, Continuum version of Beck's theorem]
Let $X\subset \R^2$ be a Borel set. Then, either 
\begin{enumerate}[label={\normalfont \textbf{\arabic*.}}]
    \item there exists a line $\ell \subset \R^2$ such that $$\dim (X\setminus \ell) < \dim X, \qquad \text{or} $$
    \item the line set $\mathcal L(X)$ satisfies the inequality 
    \[
    \dim \mathcal L(X) \geq \min \{2\dim X, 2\}.
    \]
\end{enumerate}
\end{thm}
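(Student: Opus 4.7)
Assume condition (1) fails, so that $\dim(X\setminus \ell) = \dim X$ for every line $\ell\subset \R^2$. Write $s := \min\{\dim X, 1\}$. The plan is to carry out the argument in two phases: first, upgrade the existential conclusion of Theorem \ref{thm:mainthm1.1} to a pointwise estimate on a subset of $X$ of full dimension; second, combine those pointwise estimates into a lower bound on $\dim \mathcal L(X)$ using the two-dimensional geometry of the space of lines.

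For the first phase, fix $\epsilon > 0$ and set $B_\epsilon := \{x\in X : \dim \pi_x(X\setminus\{x\}) \le s-\epsilon\}$. I would show that $G_\epsilon := X\setminus B_\epsilon$ has $\dim G_\epsilon = \dim X$ via a dichotomy. If $B_\epsilon$ is not contained in any line, Theorem \ref{thm:mainthm1.1} applied with the pair $(B_\epsilon, X)$ forces $\min\{\dim B_\epsilon, \dim X, 1\} \le s-\epsilon$, and since $\min\{\dim X, 1\} = s$ this gives $\dim B_\epsilon \le s - \epsilon < \dim X$. If instead $B_\epsilon \subset \ell$ for some line $\ell$, the assumed failure of (1) gives $\dim(X\setminus B_\epsilon) \ge \dim(X\setminus \ell) = \dim X$. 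Either way, $\dim G_\epsilon = \dim X$ and $\dim \pi_x(X\setminus\{x\}) > s - \epsilon$ for every $x \in G_\epsilon$.

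For the second phase, observe that for each $x \in \R^2$ the pencil $P_x$ of lines through $x$ is naturally identified (via a locally $2$-to-$1$ map) with $\mathbb{S}^1$, under which $\mathcal L(X) \cap P_x$ corresponds to $\pi_x(X\setminus\{x\})$. So the step-one bound translates to $\dim(\mathcal L(X) \cap P_x) > s-\epsilon$ for every $x \in G_\epsilon$. Since distinct pencils $P_x, P_{x'}$ intersect in the single point $\ell(x,x')$ of the two-dimensional space of lines, $\{P_x\}_{x\in G_\epsilon}$ is a transverse $(\dim X)$-parameter family of curves in that space, and a Marstrand-type slicing argument would give
\[
\dim \mathcal L(X) \ge \dim G_\epsilon + (s-\epsilon) = \dim X + \min\{\dim X, 1\} - \epsilon.
\]
Letting $\epsilon \to 0$ then yields $\dim \mathcal L(X) \ge \min\{2\dim X, 2\}$, which is the desired bound.

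The main obstacle is making the slicing estimate rigorous. A naive Fubini argument on the incidence set $\{(x,\ell) \in G_\epsilon \times \mathcal L(X) : x \in \ell\}$ loses too much dimension, because the $\ell$-fibers can have dimension up to $\min\{1, \dim X\}$, and subtracting this from the product lower bound $\dim X + (s-\epsilon)$ annihilates the gain. To recover the correct exponent one must invoke the failure of (1) a second time, to rule out any single line that concentrates so much of $X$ as to spoil the transversality, and then close the argument with an energy integral estimate on a Frostman measure transported from $G_\epsilon$ onto $\mathcal L(X)$ through the pencils.
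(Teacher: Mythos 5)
Your Phase~1 matches the paper's approach exactly: define the bad set $B_\epsilon$ of points whose radial projections are too small, and run the dichotomy (if $B_\epsilon$ escapes every line apply Theorem~\ref{thm:mainthm1.1} to $(B_\epsilon,X)$; if $B_\epsilon\subset\ell$ use the assumed failure of~(1)) to conclude that the good set $G_\epsilon$ has full dimension. This is precisely the argument OSW run, and that the study guide reproduces for the Erd\H{o}s--Beck variant in Theorem~\ref{thm:progressEB}.

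Phase~2 is where the proposal breaks. The paper does \emph{not} attempt a slicing/Fubini argument on the pencils $P_x$; instead it observes that $\bigcup_{x\in G_\epsilon}\mathcal L_x$ is, by construction, an $(s-\epsilon,\dim X)$-Furstenberg set of lines in the sense of Definition~\ref{def:furst-lines} (for each $x$ in the $(\dim X)$-dimensional base $G_\epsilon$, the pencil $\mathcal L_x\subset\mathcal L(X)$ has dimension $>s-\epsilon$), and then simply \emph{cites} the classical Furstenberg lower bound $\dim F\ge s'+\min\{s',t'\}$ from Wolff/Lutz--Stull/H\'era--Shmerkin--Yavicoli. Since $\dim X\ge s-\epsilon$, this yields $\dim\mathcal L(X)\ge 2(s-\epsilon)$, and letting $\epsilon\to 0$ gives $\min\{2\dim X,2\}$. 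Your attempt to replace this citation with a hand-rolled ``Marstrand-type slicing argument'' has a genuine gap, as you yourself flag in the final paragraph: the natural incidence-set/Fubini estimate loses a full dimension (it gives $\dim X+s-1$, not $2s$, when $\dim X\le 1$), and there is no Marstrand slicing theorem that runs in the direction you need here. Recovering the exponent $2s$ from a family of curves with transverse one-point intersections \emph{is} the (dual) Furstenberg set lower bound --- it is a nontrivial incidence result, not something an energy-integral bookkeeping argument will produce. Also note that your displayed inequality $\dim\mathcal L(X)\ge\dim X+\min\{\dim X,1\}-\epsilon$ cannot be correct as written: for $\dim X>1$ the right-hand side exceeds~$2$, the dimension of $\mathcal A(2,1)$. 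The fix is simply to recognize the Furstenberg structure and invoke the known bound, exactly as OSW do.
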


While the prospect of such a continuum analog actually \textit{motivated} the author's work in \cite{orponen2022kaufman}, in this paper it appears as a humble corollary of their main theorems on radial projections. From the point of view of the problem, it is natural to use radial projections to understand Beck's theorem, since for $X \subset \R^2$,
the set of points in $\pi_x(X\setminus \{x\})$ corresponds to the set of lines through $x\in X$.

Another natural thing to consider in the context of a continuum Beck's theorem is a version of the \emph{Furstenberg set problem} in the plane.

\begin{defn}\label{def:furst-lines}
    Let $\mathcal{A}(2,1)$ denote the affine Grassmannian of lines in $\R^2$. A subset $F \subset \mathcal{A}(2,1)$ is called an \define{$\bm{(s,t)}$-Furstenberg set of lines} (or a \emph{dual $(s,t)$-Furstenberg set}) if there exists a $(\ge t)$-dimensional set $X \subset \R^2$ of points such that for each $x \in X$, the collection $\mathcal{L}_x$ of lines in $F$ containing $x$ has dimension\footnote{See Section \ref{ss:Furstenberg Intro} for a bare-bones definition of the dimension of a set of lines, and the discussion following Definition \ref{def:delta-s-set} for the ``coordinate-free'' definition.} at least $s$. 
\end{defn}

The main goal regarding Furstenberg sets of lines is to establish a lower bound on the dimension of an $(s,t)$-Furstenberg set of lines which depends on $s$ and $t$ in an optimal way, and this is known as the \emph{(dual) Furstenberg set problem.}

By the correspondence sending the points of $\pi_x(X\setminus\{x\})$ to the collection $\mathcal L_x$ of lines passing through $x$, dimensional lower bounds for $(s,t)$-Furstenberg sets of lines could plausibly translate into lower bounds on the dimension of the line set $\mathcal L(X)$. In OSW's work, the authors manage to use Orponen and Shmerkin's $\epsilon$-improvement to the lower bound of an $(s,t)$-Furstenberg set of lines to establish strong lower bounds on radial projections in their main Theorem \ref{thm:mainthm1.1}. We discuss background on the Furstenberg set problem and the connection between the two different but equivalent forms of the problem in \S \ref{ss:Furstenberg Intro} below.

\subsubsection{Beck's theorem as motivation}

In incidence geometry, the Szemer\'edi--Trotter theorem gives a sharp bound for the number of incidences between finite sets of points and lines in the plane, and it also yields Beck's theorem as a corollary. As OSW note, the full strength of the Szemer\'edi--Trotter theorem is not required to prove Beck's theorem, and an appropriate $\epsilon$-improvement to simple lower bounds of the incidences suffices for the proof. In Appendix \ref{s:Beck}, we provide the details showing how this works for Beck's original theorem.
% In particular, the proof of Beck's theorem motivated their work.

 % In 2021, Orponen and Shmerkin proved the dimension of an $(s,t)$-Furstenberg set of lines is $\epsilon(s,t)>0$ more the best known bounds attained to that point. This motivated them to ask if the $\epsilon$-improvement in the Furstenberg set problem could be used to prove strong dimension lower bounds for sets of lines. The work of OSW answers this question in the affirmative.

\subsection{Recent developments in radial projections} \label{ss:radial projections}

In this section, we provide some context behind the main theorems of OSW. Many researchers have been studying the effects of radial projections on Hausdorff dimension in recent years. The most recent thread of developments started in the context of vector spaces over finite fields, studied by Lund, Thang, and Huong Thu \cite{lund2022radial} and was motivated by work of Liu and Orponen \cite{liu2020hausdorff}. In particular, they showed the following:

\begin{thm}[\cite{lund2022radial} Theorem 1.1]\label{thm:lund-exceptional}
    Let $Y\subset \mathbb{F}_q^n$ and $M$ be a positive integer with $|Y|\gtrsim q^{n-1}$ and $M\lesssim q^{n-1}$. Then, 
    \[
    |\{x\in \mathbb{F}_q^n : |\pi_x(Y)| \leq M\}| \lesssim q^{n-1} M |Y|^{-1}.
    \]
\end{thm}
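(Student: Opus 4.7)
The natural strategy is to double count the quantity
\[
T := \big|\{(x, y_1, y_2) \in E \times Y \times Y : x, y_1, y_2 \text{ are collinear}\}\big|,
\]
where $E := \{x \in \mathbb{F}_q^n : |\pi_x(Y \setminus \{x\})| \leq M\}$ is the exceptional set.

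For the lower bound, I would fix $x \in E$ and apply Cauchy--Schwarz to the fiber sizes $r_x(d) := |\pi_x^{-1}(d) \cap Y|$, $d \in \mathbb{P}^{n-1}(\mathbb{F}_q)$. Since $\sum_d r_x(d) \asymp |Y|$ while the support of $r_x$ has cardinality at most $M$, one obtains $\sum_d r_x(d)^2 \gtrsim |Y|^2/M$. Summing over $x \in E$ and recognizing that $\sum_d r_x(d)^2$ counts ordered pairs $(y_1, y_2) \in Y^2$ collinear with $x$ yields
\[
T \;\gtrsim\; |E| \, |Y|^2 / M.
\]

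For the upper bound, I would split $T$ along the diagonal $y_1 = y_2$, which contributes $\lesssim |E| |Y|$, and reorganize the off-diagonal part over lines as $\sum_{\ell} |\ell \cap E| \cdot |\ell \cap Y|(|\ell \cap Y| - 1)$. This is a weighted point--line incidence count in $\mathbb{F}_q^n$, which I would attack with a spectral / Vinh-type incidence bound for points and lines in $\mathbb{F}_q^n$: such a bound expresses this sum as a ``main term'' proportional to $|E| \sum_\ell |\ell \cap Y|^2 / q^{n-1}$, plus an error governed by the second eigenvalue of the incidence graph. Under the hypothesis $|Y| \gtrsim q^{n-1}$, the second moment $\sum_\ell |\ell \cap Y|^2$ is dominated by $|Y|^2$ (rather than by its lower-order cousin $|Y| q^{n-1}$), so the main term is of order $|E| |Y|^2 / q^{n-1}$. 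Substituting into the two-sided inequality for $T$ and rearranging then produces $|E| \lesssim q^{n-1} M / |Y|$.

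\textbf{Main obstacle.} The crux is the upper bound on $T$. The most naive estimate — ``choose $(y_1, y_2) \in Y^2$, then $x$ on the unique line through them'' — yields $T \lesssim q |Y|^2$ and, combined with the lower bound, only the weaker conclusion $|E| \lesssim q M$. To recover the correct $q^{n-1}/|Y|$ factor, one must exploit the fact that $|Y|$ is comparable to the maximum possible number of directions, which both controls $\sum_\ell |\ell \cap Y|^2$ tightly and forces the incidence graph between $\mathbb{F}_q^n$ and lines to behave pseudorandomly on $E$. Extracting the bound on $|E|$ from the resulting spectral inequality — where the ``error'' term, not the main term, is what ultimately constrains $|E|$ — is the delicate part.
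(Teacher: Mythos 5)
The study guide never proves this result: Theorem \ref{thm:lund-exceptional} is quoted from Lund--Thang--Huong Thu \cite{lund2022radial} purely as historical context for the continuum exceptional-set estimates, so there is no ``paper's own proof'' to compare against. I can only assess your proposal on its own terms.

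Your lower bound on $T$ is correct and standard: Cauchy--Schwarz on the fiber counts $r_x(d)$ gives $\sum_d r_x(d)^2 \geq (\sum_d r_x(d))^2 / M \gtrsim |Y|^2/M$ for each $x\in E$, and summing gives $T \gtrsim |E|\,|Y|^2/M$.

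The gap is in the upper bound, exactly where you flag the ``main obstacle,'' and the issue is quantitative rather than merely technical. Writing $T \approx |E||Y| + \sum_\ell |\ell\cap E|\,w(\ell)$ with $w(\ell) := |\ell\cap Y|\big(|\ell\cap Y|-1\big)$, the weighted expander-mixing / Vinh-type estimate for the point--line incidence graph of $\mathbb{F}_q^n$ gives
\[
\Bigl|\sum_\ell |\ell\cap E|\,w(\ell) - \tfrac{|E|}{q^{n-1}}\sum_\ell w(\ell)\Bigr| \;\lesssim\; \lambda\,\|\mathbf 1_E\|_2\,\|w\|_2, \qquad \lambda \sim q^{(n-1)/2},
\]
and here $\|w\|_2$ is the obstruction. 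Since $|\ell\cap Y|\leq q$, the best uniform bound is $\sum_\ell w(\ell)^2 \leq q^2\sum_\ell |\ell\cap Y|^2 \sim q^2|Y|^2$ — and this is saturated whenever $Y$ contains (or nearly contains) a line, which is allowed under the hypotheses. The error term is then $\sim q^{(n+1)/2}|Y|\sqrt{|E|}$. Feeding this back into $|E||Y|^2/M \lesssim T$ and discarding the main term (which, as you observe, cannot help since $M\lesssim q^{n-1}$) gives only $|E| \lesssim q^{n+1} M^2/|Y|^2$, which falls short of the target $q^{n-1}M/|Y|$ by a factor of up to $q^2\,M/|Y|$. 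Dyadic pigeonholing on $|\ell\cap Y|$ does not save you: the top scale $|\ell\cap Y|\sim q$ dominates and reproduces the same loss. So ``$\sum_\ell |\ell\cap Y|^2$ is dominated by $|Y|^2$'' is true at the level of $L^1$, but the spectral estimate is governed by the $L^2$ norm of the weights, which is much larger. To make the double count close, you would need either a genuinely sharper weighted incidence inequality (one that avoids the fourth moment of $|\ell\cap Y|$), or a different decomposition that isolates the pairs lying on $Y$-rich lines before invoking pseudorandomness — this is the missing idea.
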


This lead to the following conjecture in $\R^n$ (when $k = n-1$), which was later resolved by the first author and Gan in \cite{bright2023exceptional}:
\begin{thm}[\cite{bright2023exceptional} Theorem 1] \label{thm:bright thm1}
    Let $Y\subset \R^n$ be a Borel set with $\dim Y\in (k,k+1]$ for some $k \in \{1,\dots, n-1\}$. Then, for $0<s<k$,
    \[
    \dim (\{x\in \R^n \setminus Y : \dim \pi_x(Y) < s\}) \leq \max \, \{k+s - \dim Y, 0\}.
    \]
\end{thm}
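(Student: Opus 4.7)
The plan is to argue by contradiction: let $E_s := \{x \in \R^n \setminus Y : \dim \pi_x(Y) < s\}$ and suppose $\dim E_s > u := \max\{k+s-\dim Y,\,0\}$. Fix a small scale $\delta > 0$ and discretize: take $\delta$-separated subsets of $E_s$ and $Y$ of cardinalities $\sim \delta^{-\dim E_s}$ and $\sim \delta^{-\dim Y}$ respectively. For each point $x$ of the discretized exceptional set, the hypothesis $\dim \pi_x(Y) < s$ means that $Y$ can be covered by $\lesssim \delta^{-s}$ $\delta$-tubes through $x$. The goal is then to push this tube count against a Furstenberg/incidence lower bound to contradict $\dim E_s > u$.

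The next step is a reduction to the critical ambient dimension $n = k+1$. Because $\dim Y \in (k,k+1]$, a Marstrand-type slicing theorem gives a positive-measure family of affine hyperplanes $H$ with $\dim(Y \cap H) \geq \dim Y - 1 \in (k-1, k]$, and a Fubini-in-the-affine-Grassmannian argument preserves a positive fraction of $\dim E_s$ inside $H$; crucially, for any $x \in H$ the restriction $\pi_x|_{Y \cap H}$ \emph{is} the radial projection from $x$ inside $H$, so the exceptional-set hypothesis descends to the slice. Iterating $n-(k+1)$ times (inducting on $n-k$) brings me to the case $n = k+1$, where $\dim Y$ is within $1$ of the full ambient dimension, and the conclusion $\dim E_s \leq \max\{k + s - \dim Y,0\}$ has exactly the same form in the slice.

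The heart of the argument is then a thin-tubes bootstrap in $\R^{k+1}$, in the spirit of the proofs of Theorems \ref{thm:mainthm1.1} and \ref{thm:mainthm1.2}. In $\R^{k+1}$ each exceptional $x$ contributes $\lesssim \delta^{-s}$ $\delta$-tubes through it that together cover $Y$; counting incidences between the $\sim \delta^{-\dim E_s}$ exceptional points and these tubes, and combining with an $\epsilon$-improvement for the Furstenberg set problem in the ambient dimension $k+1$, one would produce a matching upper bound of the form
\[
\dim E_s + \dim Y \leq k + s,
\]
which contradicts $\dim E_s > k + s - \dim Y$ (the trivial regime $u = 0$ is handled separately by the ``not on a line'' style non-degeneracy hypothesis, analogously to the role of $X$ in Theorem \ref{thm:mainthm1.1}).

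The main obstacle is that the OSW framework is sharply planar: its Furstenberg-set $\epsilon$-improvement and tube-incidence inputs are known to be sharp only in $\R^2$. Establishing the genuine $\R^{k+1}$ version of the thin-tubes mechanism---either by a further slicing of the $(k+1)$-dimensional problem down to a judicious family of $2$-planes, or by a higher-dimensional Furstenberg estimate proved independently---is the key technical hurdle, and is where the bulk of the work in \cite{bright2023exceptional} is concentrated. A secondary technical point is ensuring that the slicing reduction does not destroy the non-degeneracy hypotheses (e.g.\ that the exceptional set in the slice is not contained in a lower-dimensional affine subspace) needed to invoke the OSW-style estimate in the slice.
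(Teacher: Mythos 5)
First, a caveat on what I can compare against: the study guide does not prove Theorem~\ref{thm:bright thm1}. It is quoted purely as background in \S\ref{ss:radial projections}, cited to \cite{bright2023exceptional}, with no argument supplied anywhere in the document. So there is no ``paper's own proof'' for your sketch to track, and the comparison can only be made against the source reference and against internal consistency.

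With that said, the slicing reduction is the step I would object to, and it is a gap you did not flag. Marstrand slicing lowers the dimension of the set being sliced as well as the ambient dimension. After one hyperplane slice you are in $\R^{n-1}$ with $\dim(Y\cap H)\approx\dim Y-1\in(k-1,k]$, so the statement you need in the slice is the one with parameter $k-1$, not $k$. Iterating $n-(k+1)$ times lands you in a $(k+1)$-plane $V$ with $\dim(Y\cap V)\in(2k-n+1,\,2k-n+2]$; the relevant parameter there is $k'=2k-n+1$, and the theorem in $\R^{k+1}$ is only asserted for $0<s<k'=2k-n+1$, a strictly smaller window than the target $0<s<k$ whenever $n>k+1$. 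Worse, for $k<n/2$ the parameter $k'$ is not even positive, so no instance of the theorem is available in the slice. The arithmetic coincidence you observe---that $\max\{k'+s-\dim(Y\cap V),0\}$ equals $\max\{k+s-\dim Y,0\}$---does not repair this, because it is the hypothesis ``$s<k'$'' that fails, not the form of the conclusion. The reduction actually used in \cite{bright2023exceptional} is genuinely different: rather than intersecting $Y$ with a plane, one pushes $Y$ forward by orthogonal projection $P_V$ onto a generic $(k+1)$-plane $V$, which preserves $\dim P_V(Y)=\dim Y$ (since $\dim Y\le k+1$) and hence keeps the parameter $k$ intact; the point is that for $x\in V$, radially projecting $Y$ from $x$ and then spherically projecting onto $V\cap\mathbb S^{n-1}$ agrees with the radial projection of $P_V(Y)$ from $x$ inside $V$, so the exceptional condition descends with the correct dimensional bookkeeping. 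Finally, you were right to flag the second gap: even after a correct reduction, the core Furstenberg/incidence input needed in $\R^{k+1}$ is not supplied by OSW's planar machinery, and supplying it (together with handling the degenerate cases, e.g.\ when the exceptional set lies in an affine subspace, which your sketch defers to a ``non-degeneracy hypothesis'' that the theorem does not actually assume) is where the cited work does its real work.
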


One can prove the following result (conjectured by Liu \cite{liu2020hausdorff}) as a consequence.

\begin{thm}[\cite{bright2023exceptional} Theorem 2]
\label{thm:bright thm2}
    Let $Y\subset \R^n$ be a Borel set with $\dim Y\in (k-1,k]$ for some $k \in \{1,\dots, n-1\}.$ Then we have 
    \[
    \dim (\{x\in \R^n\setminus Y : \dim \pi_x(Y) < \dim Y\})\leq k.
    \]
\end{thm}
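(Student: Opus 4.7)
Let $d := \dim Y \in (k-1,k]$ and set $E_s := \{x \in \R^n \setminus Y : \dim \pi_x(Y) < s\}$, so that the exceptional set of interest is $E = \bigcup_{j\ge 1} E_{s_j}$ for any rational sequence $s_j \nearrow d$, with $\dim E = \sup_j \dim E_{s_j}$.

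The first move is to apply Theorem \ref{thm:bright thm1} with the shifted index $k-1$, which is legal because $\dim Y \in ((k-1),(k-1)+1]$ and (for $k \ge 2$) the shifted index $k-1 \in \{1,\dots,n-1\}$. For every $0 < s < k-1$ this yields
\[
\dim E_s \;\le\; \max\{(k-1) + s - d,\; 0\}.
\]
Letting $s \nearrow k-1$ and using $d > k-1$, the right-hand side tends to $\max\{2k-2-d,0\} < k-1 \le k$, so the sub-exceptional part $\{x : \dim \pi_x(Y) < k-1\}$ is handled immediately.

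The main obstacle is the annular remainder
\[
F := \{x : k-1 \le \dim \pi_x(Y) < d\},
\]
since at index $k-1$ Theorem \ref{thm:bright thm1} only controls $s < k-1$, and index $k$ is unavailable because $\dim Y \le k$ rules out $\dim Y \in (k,k+1]$. My plan to bound $\dim F \le k$ is by contradiction through a Frostman-measure and thin-tubes scheme of the sort central to OSW. Supposing $\dim F > k$, one picks $\mu$ a Frostman measure on $F$ of exponent exceeding $k$ and $\nu$ a Frostman measure on $Y$ of exponent close to $d$. For $\mu$-a.e.\ $x$, the inequality $\dim \pi_x(Y) < d$ forces $\pi_{x*}\nu$ to concentrate on small spherical caps, equivalently, $\nu$ concentrates in thin tubes through $x$; a Furstenberg/incidence bound of the kind driving Theorem \ref{thm:bright thm1} should then contradict the combined mass count implied by $\dim \mu + \dim \nu > 2k-1$. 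I expect this incidence-to-dimension contradiction to be the technical heart of the argument, in contrast to the essentially immediate application of Theorem \ref{thm:bright thm1} handling the sub-exceptional part.

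Combining the two bounds yields $\dim E \le k$. The borderline case $k=1$ (so $\dim Y \in (0,1]$) falls outside the shifted-index trick and must be handled separately, for instance by classical Kaufman-type exceptional-set estimates for radial projections in $\R^n$.
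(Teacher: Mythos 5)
The paper does not actually prove Theorem~\ref{thm:bright thm2}; it states it as a citation to \cite{bright2023exceptional} with only the one-line remark that it follows from Theorem~\ref{thm:bright thm1}, so there is no paper-internal argument to compare against. Evaluated on its own terms, your proposal has a genuine gap. The shifted-index application of Theorem~\ref{thm:bright thm1} (with index $k-1$, valid when $k\ge 2$) is correct and cleanly yields $\dim E_s \le \max\{(k-1)+s-\dim Y,0\} < k-1$ for $s < k-1$. But the ``annular'' remainder $F=\{x: k-1 \le \dim\pi_x(Y) < \dim Y\}$ — which is the actual content of the theorem — is only gestured at: you describe a hoped-for thin-tubes/Furstenberg contradiction (choose $\mu$ of exponent $> k$ on $F$, $\nu$ of exponent near $\dim Y$ on $Y$, argue that $\dim\mu+\dim\nu > 2k-1$ is incompatible with the forced tube-concentration) but you do not identify the incidence input that would be invoked, verify that the numerology closes, or carry out any of the bookkeeping. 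As written, that step is a plan rather than a proof, and you yourself flag it as such.

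The $k=1$ case is worse than a loose end: there the sub-exceptional range $s<k-1=0$ is empty, so the shifted-index reduction does \emph{nothing} and the entire exceptional set lies in the unproven annular regime. Appealing to ``classical Kaufman-type exceptional-set estimates'' will not close this either — for $\dim Y\in(0,1]$ in $\R^n$ the bound $\dim\{x:\dim\pi_x(Y)<\dim Y\}\le 1$ is already a nontrivial radial-projection statement (sharp, e.g., for $Y$ on a line), not something that falls out of the orthogonal-projection Kaufman estimate. To turn this into a proof you would need to either supply the missing incidence argument for $F$ in full, or — more in the spirit of this being billed as a ``consequence'' — find the actual reduction used in \cite{bright2023exceptional}, which is likely different in kind from the two-regime split you propose.
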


% Theorem \ref{thm:bright thm2} followed via a dimensional reduction by the first author and Gan, and a ``swapping trick'' of Liu (similarly utilized by Orponen and Shmerkin in the planar case \cite{orponen2022exceptional}). 
In the context of radial projections, Theorems \ref{thm:lund-exceptional}, \ref{thm:bright thm1}, and \ref{thm:bright thm2} are known as \emph{exceptional set estimates} because they address the size of sets of points $x$ where the radial projection to $x$ signficantly compresses the dimension of $Y$---something that is ``exceptional'' from a measure-theoretic point of view. Theorems \ref{thm:bright thm1} and \ref{thm:bright thm2} are significant because they extend similar known bounds for radial projections in the plane to all Euclidean spaces in a unified way.

Although OSW's main radial projection theorems are results in the plane, by considering projections from higher dimensional Euclidean spaces to lower dimensional ones, OSW are able to prove an array of new results, including stronger exceptional set estimates than those of Theorems \ref{thm:bright thm1} and \ref{thm:bright thm2} in all dimensions.

In this study guide, we will focus on exploring the main Theorems \ref{thm:mainthm1.1} and \ref{thm:mainthm1.2} in the plane, although the device of thin tubes will be discussed in a natural way in all dimensions when we introduce it in Definition \ref{defn:thin-tubes-2}.

\subsection{Furstenberg sets}\label{ss:Furstenberg Intro}

An important blackbox for OSW's results is the $\epsilon$-improvement for the Furstenberg set problem. Before explaining what this result says, we give some background. Let $\mathcal{A}(2,1)$ denote the collection of all affine lines in the plane. We can regard $\mathcal A(2,1)$ as a metric space by parametrizing (non-vertical) lines in slope-intercept form $\ell(m,b) = \{(x,y)\in\R^2:y = mx+b\}$. The distance between two lines $\ell,\ell'$ can then be defined as $|m-m'| + |b-b'|$. In particular, a subset of non-vertical lines in $\mathcal A(2,1)$ has dimension $\alpha$ if the corresponding subset of $\R^2$ has dimension $\alpha$. This metric space structure on the non-vertical lines agrees with the ``coordinate-free'' definition which we discuss following Definition \ref{def:delta-s-set}.

\begin{defn}\label{def:furst-points}
    Let $0\leq s \leq 1$ and $0\leq t \leq 2$. An \define{$\bm{(s,t)}$-Furstenberg set of points} (often abbreviated to just an $(s,t)$-Furstenberg set) is a set $P\subset \R^2$ such that there exists a line set $\mathcal L\subset \mathcal{A}(2,1)$ with $\dim \mathcal L \geq t$ and $\dim (P\cap \ell) \geq s$ for all $\ell \in \mathcal L$. See \S\ref{ss:discrete-furstenberg} for the definition of the dimension of a line set.
\end{defn}

The \textit{Furstenberg set problem} asks what lower bounds one can obtain for the Hausdorff dimension of a Furstenberg set. 

A result due to Lutz--Stull and Wolff (for different ranges of $s$ and $t$, and later reproven by H\'era--Shmerkin--Yavicoli), proves that an $(s,t)$-Furstenberg set $P$ satisfies
\[
\dim P \geq s + \min\{s,t\};
\]
see \cite{lutz2020bounding}, \cite{wolff1999recent}, and \cite{hera2021improved}. In particular, if $t \ge 2$, the dimension of an $(s,t)$-Furstenberg set is at least $2s$. One of the striking features of OSW's work is that they are able to leverage just an $\epsilon$-improvment to the $2s$ lower bound, due Orponen and Shmerkin \cite{orponen2021hausdorff}, in order to prove the sharp continuum Beck's theorem. The following result (and its $\delta$-discretized version Theorem \ref{thm:furstenberg}) will be referred to throughout this study guide as ``the $\epsilon$-improved Furstenberg set lower bound.''
\begin{thm}[\cite{orponen2021hausdorff} Theorem 1.1]\label{thm:epsilon-improved-furst}
    For every $s\in (0,1)$ and $t\in (s,2]$, there exists $\epsilon = \epsilon(s,t)>0$ such that the following holds. Given $P$ is an $(s,t)$-Furstenberg set, then $$\dim P \geq 2s + \epsilon.$$
\end{thm}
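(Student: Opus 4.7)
My plan is to prove Theorem \ref{thm:epsilon-improved-furst} by a discretization-and-contradiction strategy: reduce to a $\delta$-scale problem about $(\delta,s)$-sets of lines and points, apply a deeper ``inverse'' tool (for instance an $\epsilon$-improvement to a discretized projection or incidence theorem) to the forced configuration, and translate the resulting discrete gain back to an $\epsilon$-improvement of the Hausdorff dimension.

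First, I would pass to a $\delta$-discretized setup. Choose a Frostman measure $\mu$ on $P$ of exponent just below $\dim P$, and a Frostman measure $\nu$ on the line set $\mathcal{L}$ of exponent just below $t$. Standard dyadic pigeonholing over scales and lines should produce a scale $\delta>0$, a $(\delta,t)$-subset $\mathcal{L}_\delta \subset \mathcal{L}$, and, for each $\ell \in \mathcal{L}_\delta$, a $(\delta,s)$-set $P_\ell$ of $\delta$-balls contained in the $\delta$-neighborhood of $\ell$. Further refinement arranges that incidence multiplicities and cardinalities are essentially uniform. The task then reduces to showing
\[
    \Big|\bigcup_{\ell \in \mathcal{L}_\delta} P_\ell \Big| \gtrsim \delta^{-2s-\epsilon}
\]
for some $\epsilon = \epsilon(s,t) > 0$, and translating back from $\delta$-covers to Hausdorff dimension via a routine (if tedious) multiscale argument.

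Second, I would invoke an $\epsilon$-improvement to an auxiliary tool. Candidate inputs are Shmerkin's inverse theorem for $L^q$-dimensions of convolutions, or an $\epsilon$-improvement to Bourgain's discretized projection theorem; each says, roughly, that a measure whose projections onto a rich family of directions have near-minimal dimension must exhibit rigid multiscale self-similar structure. The hypothesis $t > s$ is used in an essential way: it provides the ``direction set'' of positive size along which Bourgain-type improvements can be applied, and this room disappears at the endpoint $t = s$ where no $\epsilon$-improvement is expected. Applied to the configuration above via point-line duality, such an inverse theorem would contradict the assumption $\dim P < 2s + \epsilon$ for sufficiently small $\epsilon$.

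The main obstacle is the auxiliary $\epsilon$-improvement itself. The bound $2s$ is saturated at every single scale by a Cauchy--Schwarz / double-counting incidence argument, so the gain cannot be extracted by a purely combinatorial refinement; it must come from the structural rigidity of near-extremizers furnished by Shmerkin's and Orponen--Shmerkin's prior work. Making the two ``rooms'' — the room $t - s > 0$ on the dual side and the hypothesized room $\epsilon$ on the primal side — interact so as to yield a single $\epsilon(s,t) > 0$ that survives pigeonholing across scales (rather than a sequence of scale-dependent gains that cancel in the limit) is the technical crux and the reason the theorem is deep.
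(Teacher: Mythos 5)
This theorem is not proved in the paper you are reading: OSW and the study guide both cite it as a black box from Orponen--Shmerkin \cite{orponen2021hausdorff}. The study guide's only engagement with it is to record the discretized form as Theorem \ref{thm:furstenberg} (OSW Theorem 2.7, Orponen--Shmerkin Theorem 1.3) and then apply that discretized bound inside the proof of the key Lemma \ref{lem:lem2.8}. Your first paragraph correctly identifies the discretization step that converts the continuous Hausdorff-dimension statement into the $\delta$-scale covering statement $\big|\bigcup_{\ell} P_\ell\big| \gtrsim \delta^{-2s-\epsilon}$; that reduction is standard and matches how the two versions are related in \cite{orponen2021hausdorff}, so up to that point you are on the right track.

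Beyond the reduction, though, your proposal does not prove anything: the second and third paragraphs sketch a plausible roadmap (pigeonholing, point-line duality, Shmerkin's inverse theorem for $L^q$-dimensions of convolutions, an improvement to Bourgain's discretized projection theorem), and this is indeed roughly the shape of the Orponen--Shmerkin argument, but the entire analytic content is deferred to ``the auxiliary $\epsilon$-improvement,'' and that deferred piece \emph{is} the theorem. In particular, nothing in your sketch explains how the room $t-s>0$ gets converted into a gain $\epsilon(s,t)$ that is uniform on compact subsets of $\{(s,t)\}$ and survives the multiscale pigeonholing --- a property you flag as ``the technical crux'' without resolving, and which is precisely what makes Theorem \ref{thm:furstenberg} strong enough to drive the bootstrapping in \S\ref{ss:thm1.1-key-lemma}. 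So: correct identification of strategy, but a genuine gap as a proof, since the hard step is missing. For the purposes of reading OSW, it suffices to take Theorem \ref{thm:furstenberg} as given and to track carefully how its uniform-in-$(\sigma,s)$ feature is what keeps the increment $\eta$ in Lemma \ref{lem:lem2.8} from degenerating.
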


% \begin{rem}
%     This is an $\epsilon$-improvement of the result of Wolff from 1999 \cite{wolff1999recent}\textemdash hence the colloquial name for it in this study guide being ``the $\epsilon$-improvement for the Furstenberg set problem."
% \end{rem}

\begin{rem}
    We note that in late 2023, Kevin Ren and Hong Wang resolved the Furstenberg set conjecture in the plane (see \cite{ren2023furstenberg}), namely that if $F$ is an $(s,t)$-Furstenberg set of points, then
    \[
    \dim F \ge \min\left\{s+t,\frac{3s+t}{2},s+1\right\}.
    \]
\end{rem}

As we mentioned in the discussion in Section \ref{ss:becks theorem}, in the context of Beck's theorem, Furstenberg sets of lines are natural objects to consider, but the discussion so far in this section has focused on Furstenberg sets of points and theorems justifying lower bounds on their dimensions. The relationship between the two is \emph{point-line duality}, which we briefly review here.

Given a point $(m,b)$ in the plane, we consider the associated \emph{dual line} $\mathbf D(m,b) = \{(x,mx+b): x\in \R\}$. Similarly, given a line in slope-intercept form $\ell=\{(x,c x + d):x\in\R\}$, we consider the \emph{dual point} $\mathbf D^\ast (\ell) = (-c,d) \in \R^2$. These maps are dual in the sense that they preserve incidences between points and (non-vertical) lines:
\begin{equation*}
p \in \ell \iff \mathbf D^\ast (\ell) \in \mathbf D(p).
\end{equation*}
If $X\subset \R^2$, we say $\mathbf D(X) = \bigcup_{x\in X}\mathbf D(x)$ is the corresponding dual set of lines, and if $\mathcal L\subset \mathcal A(2,1)$, $\mathbf D^*(\mathcal L) = \bigcup_{\ell\in\mathcal L}\mathbf D^*(\ell)$ is the corresponding dual set of points. Locally, both $\mathbf D,\mathbf D^\ast$ are bilipschitz mappings between the standard metric on $\R^2$ and the metric on $\mathcal A(2,1)$. Therefore, the Hausdorff dimension of a set of points agrees with the Hausdorff dimension of the corresponding dual set of lines, and vice-versa.

Given an $(s,t)$-Furstenberg set of lines, what does its image look like under $\mathbf D$ and $\mathbf D^*$? Recall the data of an $(s,t)$-Furstenberg set of lines (Definition \ref{def:furst-lines}):
\begin{itemize}
    \item A $t$-dimensional set of points $X\subset \R^2$, and
    \item For each $x\in X$, an $s$-dimensional set of lines $\mathcal L_x$ \emph{containing} $x$.
\end{itemize}
Under $\mathbf D$ and $\mathbf D^*$, the corresponding dual data is given by
\begin{itemize}
    \item A $t$-dimensional set of lines $\mathbf D(X)\subset\mathcal A(2,1)$, and
    \item For each line $\mathbf D(x)\in\mathbf D(X)$, an $s$-dimensional set of points $\mathbf D^*(\mathcal L_x)$ \emph{contained in} $\mathbf D(x)$.
\end{itemize}

Comparing with Definition \ref{def:furst-points}, an $(s,t)$-Furstenberg set of lines has a corresponding $(s,t)$-Furstenberg set of points, and vice-versa. Therefore, the question of finding optimal lower bounds for the dimension of a Furstenberg set of lines is equivalent to finding optimal lower bounds for the dimension of a Furstenberg set of points.

\begin{rem}
    With our notational conventions, an $(s,t)$-Furstenberg set of points precisely corresponds to an $(s,t)$-Furstenberg set of lines under duality, but be aware that the order of ``$s$'' and ``$t$'' in the definition matters, and in some conventions, the order of $(s,t)$ may be reversed on taking duality.
\end{rem}

\subsection{An outline of the study guide}

In this study guide, we focus on OSW's radial projection results\textemdash Theorems \ref{thm:mainthm1.1} and \ref{thm:mainthm1.2}\textemdash in \S\ref{s:Section 2} and \S\ref{s:Section 3}, respectively, in order to highlight how the $\epsilon$-improvement in the  Furstenberg set problem translates into a sharp theorem (continuum Beck). Then, in \S\ref{s:Section 4}, we discuss further results that have come out since the work of OSW, as well as work that is currently in progress. The topics include a continuum Erd\H{o}s--Beck theorem and generalizations of Theorems \ref{thm:mainthm1.1} and \ref{thm:mainthm1.2}.

\subsection{Notation and preliminaries}
We use the following notation for certain standard ideas coming from measure theory. 
\begin{itemize}

    \item If $\mu$ is a measure on $A$, we let $\spt \mu$ denote the support of $\mu$. This is the smallest closed subset $F \subset A$ such that $\mu (A \setminus F) = 0$.

    \smallskip
    
    \item We let $\mathcal P(X)$ denote the space of Borel probability measures $\mu$ with $\spt \mu \subset X$.

    \smallskip
    
    \item If $T\colon X\to Y$ is a measurable map, and $\mu$ is a measure on $X$, we let $T\mu$ denote the pushforward of $\mu$ by $T$. The pushforward defines a measure on $Y$ via the formula $(T\mu)(E) = \mu(T^{-1}(E))$.

    \smallskip
    
    \item A measure $\mu$ on a metric space $X$ is $\sigma$-Frostman if $\mu(B(x,r))\le C_\mu r^\sigma$ for all $x\in X$ and $r > 0$.
    
    \smallskip
    
    \item If $\mu$ is a measure, and $\sigma > 0$, we let $I_\sigma(\mu)$ denote the $\sigma$-energy of $\mu$, defined by $\iint |x-y|^{-\sigma}\,d\mu(x)\,d\mu(y)$.
    The notions of being $\sigma$-Frostman and having finite $\sigma$-energy are closely related. For instance,  if $\mu$ is $\sigma$-Frostman, then $I_\sigma(\mu)<\infty$. 
    
\end{itemize}
We found Chapter 2 of Mattila's book \cite{mattila2015fourier} to be a helpful reference for the commonly used facts from geometric measure theory in OSW. In this study guide, we also use the following notation.
\begin{itemize}
    \item If $X$ is a (separable) metric space and $A \subset X$, then $\mathcal{H}^s$ and $\dim A$ always refer to the $s$-dimensional Hausdorff measure and Hausdorff dimension on $X$. See Chapter 5 of \cite{mattila1995geometry} for a treatment of these objects in general metric spaces.

    \smallskip
    
    \item The map $\pi_x \colon \mathbb{R}^n \setminus \{x\} \rightarrow \mathbb{S}^{n-1}$ defined as $$\pi_x(y) = \frac{y-x}{|y-x|}, \quad y \in \mathbb{R}^n \setminus \{x\},$$
    always denotes radial projection onto the unit sphere about $x \in \mathbb{R}^n$.

    \smallskip

    \item For $e \in \mathbb{S}^{n-1}$, let $P_e \colon  \mathbb{R}^n \rightarrow \ell_e$ denote orthogonal projection onto the the line $\ell_e = \{t e : t \in \mathbb{R}\}$. The formula is
    $$
    P_e (x) : = (x \cdot e) e, \quad \forall x \in \mathbb{R}^n.
    $$

    \smallskip
    
    \item For integers $1 \leq m \leq n$, we let $\mathcal{G}(n,m)$ denote the Grassmannian of $m$-dimensional linear subspaces in $\mathbb{R}^n$; the set $\mathcal{A}(n,m)$ is the corresponding affine Grassmannian of $m$-dimensional affine subspaces in $\mathbb{R}^n$. See Chapter 4 of \cite{mattila1995geometry} for a discussion of these objects as metric spaces, and their associated invariant measures.

    \smallskip

    \item If $G$ is a subset of a product set $X \times Y$ we let $G|_x := \{ y \in Y \!: (x,y) \in G \}$ denote the $x$-slice of $G$, and $G|_y=\{x\in X:(x,y)\in G\}$ denotes the $y$-slice of $G$.

    \smallskip
    
    \item Let $\delta > 0$. If $X$ is a metric space and $A \subset X$, then $\vert A \vert_{\delta}$ denotes the $\delta$-covering number of $A$. That is, the (minimum) number of balls of diameter $\delta$ needed to cover $A$.
\end{itemize}

\bigskip
\begin{sloppypar}
\noindent {\bf Acknowledgements.} This study guide was developed as a part of the Study Guide Writing Workshop 2023 at the University of Pennsylvania. We would like to thank Josh Zahl for his mentorship, Hong Wang for her insightful discussions, and Phil Gressman, Yumeng Ou, Hong Wang, and Joshua Zahl for organizing this workshop.
\end{sloppypar}

\section{The Kaufman-type estimate, Theorem 1.1} \label{s:Section 2}

The first main result of OSW is the so-called ``Kaufman-type" radial projection theorem. (See Appendix \ref{Appendix C} for its relation to Kaufman's classical exceptional set estimate.)

\begin{thm}[\cite{orponen2022kaufman} Theorem 1.1] \label{thm:radial-kaufman}
    Let $X\subset \R^2$ be a (non-empty) Borel set which is not contained on any line. Then, for every Borel set $Y\subset \R^2$, 
    \begin{equation} \label{eq:radial-kaufman}
    \sup_{x\in X} \dim \pi_x(Y\setminus \{x\}) \geq \min \{\dim X, \dim Y, 1\}.
    \end{equation}
\end{thm}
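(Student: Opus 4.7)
The plan is to argue by contradiction. Suppose there exists $s < \min\{\dim X, \dim Y, 1\}$ such that $\dim \pi_x(Y \setminus \{x\}) \leq s$ for every $x \in X$. Fix Frostman measures $\mu_X$ on $X$ and $\mu_Y$ on $Y$ with exponents just below $\dim X$ and $\dim Y$, respectively. After $\delta$-discretizing, the hypothesis says that for each $x \in X$ essentially all of $\mu_Y$ is captured by a family of $\lesssim \delta^{-s}$ many $\delta$-tubes through $x$.

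The central device for deriving a contradiction is the \emph{thin tubes} property of OSW: for $\mu_X$-a.e.~$x \in X$, every $\delta$-tube $T \ni x$ should satisfy $\mu_Y(T) \lesssim \delta^{\min\{\dim X, \dim Y, 1\} - \eta}$ in a suitable averaged sense. Via a standard Frostman/energy argument applied to the pushforwards $(\pi_x)_\ast \mu_Y$, this bound is equivalent to inequality \eqref{eq:radial-kaufman}, so the whole theorem reduces to establishing thin tubes for an appropriate pair $(\mu_X,\mu_Y)$.

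To prove thin tubes I would run a bootstrapping scheme. Starting from a weak base estimate (available from classical radial projection bounds, see Appendix \ref{Appendix C}), at each step I would dualize the picture: the family of ``bad'' $\delta$-tubes which carry anomalously large $\mu_Y$-mass through many $x \in X$ becomes, under point-line duality, a $\delta$-discretized Furstenberg configuration whose $(s,t)$-parameters are governed by the current thin tubes exponent and by $\dim X$. Applying the $\varepsilon$-improved Furstenberg estimate (Theorem \ref{thm:epsilon-improved-furst}, in its $\delta$-discretized form) gives a strictly larger lower bound for the size of this dual set than the tube-counting hypothesis can accommodate, forcing the exponent in the thin tubes bound upward by a definite $\varepsilon = \varepsilon(s, \dim X) > 0$. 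Iterating finitely many times pushes the exponent to the target value $\min\{\dim X, \dim Y, 1\}$, contradicting the contradiction hypothesis.

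The non-collinearity hypothesis on $X$ is essential here: it ensures $\dim X > 0$ and, more importantly, rules out configurations in which both $X$ and the bad-tube family degenerate onto a single line, thereby guaranteeing that the dualized Furstenberg parameters fall into the regime $s \in (0,1)$, $t \in (s,2]$ where the $\varepsilon$-improvement applies. The main obstacle I anticipate is the bootstrap step itself: each iteration must be set up with enough care (pigeonholing scales, refining to obtain Frostman-like measures on the relevant tube families, and excising exceptional $x$'s) that the gain $\varepsilon$ is genuinely uniform across iterations. A secondary technical difficulty is keeping the dictionary between the three viewpoints on the problem---radial projections on $\mathbb{S}^1$, tube incidences in $\mathbb{R}^2$, and dual Furstenberg sets in $\mathcal{A}(2,1)$---tight enough that no parameters leak at the interfaces.
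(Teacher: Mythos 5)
Your high-level strategy matches OSW's: reduce the theorem to a thin-tubes statement for Frostman measures, then bootstrap the thin-tubes exponent via the $\epsilon$-improved Furstenberg estimate, exploiting the uniformity of the gain $\eta$. However, two of the supporting steps as you have sketched them would not carry the argument. The base case does not come from ``classical radial projection bounds,'' and Appendix~\ref{Appendix C} (which treats Kaufman's orthogonal-projection estimate and its supremum reformulation) contains no thin-tubes statement. The actual base case is OSW Proposition~2.4 --- Theorem~B.1 of Shmerkin--Wang, recorded here as Lemma~\ref{lem:prop2.4} --- asserting that if every $\delta$-tube has $\nu$-mass at most $\tau$, then $(\mu,\nu)$ have $(\beta,K,1-\epsilon)$-thin tubes for some $\beta>0$. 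That is a nontrivial recent input, not a classical one; without it the bootstrap cannot launch.

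The second gap is the case analysis around degenerate configurations. The Shmerkin--Wang base case, and hence the entire bootstrap, demands that neither measure concentrate mass on $\delta$-tubes, which fails precisely when $\mu$ or $\nu$ puts positive mass on a line. OSW's Corollary~2.21 (Corollary~\ref{cor:cor2.21} here) therefore splits into three cases: if $\nu(\ell)>0$ and $\mu(\ell)<1$, a bi-Lipschitz argument for $\pi_x|_\ell$ gives $s$-thin tubes directly; if $\mu(\ell)>0$, a Kaufman/Marstrand-type energy argument supplies $\sigma$-thin tubes for $(\mu|_\ell,\nu|_K)$; and only in the residual case, where both measures vanish on every line, is the Furstenberg bootstrap invoked at all. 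Your proposal suppresses the first two cases, and your gloss on the non-collinearity hypothesis is correspondingly off: it is used not to ``place the dualized Furstenberg parameters in the regime $s\in(0,1)$, $t\in(s,2]$'' but rather (a) to guarantee $X\setminus\ell\ne\varnothing$ when $\spt\nu\subset\ell$, so that the easy bi-Lipschitz argument has a vantage point, and (b) to ensure, after restricting to compact supports, that $\mu(\ell)\nu(\ell)<1$ for every line $\ell$ --- the hypothesis that opens the door to the case analysis. Finally, the paper's key lemma (Lemma~\ref{lem:lem2.8}) works directly with $(\delta,s,\delta^{-\epsilon})$-sets of points and $(\delta,\sigma,\delta^{-\epsilon})$-sets of tubes through those points, feeding Theorem~\ref{thm:furstenberg} without an explicit point--line dualization step; the duality you invoke is the right mental model but is not literally the mechanism.
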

%%% Alex's addition v %%%

% Generally I was thinking of putting here some kind of simplified explanation of thin tubes, relating it heuristically to the size of radial projections, and then mentioning how the authors have a way to improve the degree of thin tubes based on the epsilon improvement to the Furstenberg set dimension lower bound.

%%% Alex's above ^ %%%

The authors prove this by a bootstrapping scheme that outsources the base case to \cite{shmerkin2022non} and exploits the Furstenberg set bound in \cite{orponen2021hausdorff} to execute the actual bootstrapping. Roughly, the argument reads as follows.
\begin{enumerate}[label=\textbf{\arabic*.}, itemsep=3pt, topsep=0pt]
    \item As is generally the case in geometric measure theory, rephrase the problem concerning the geometry of sets as one concerning the geometry of measures. In particular, we replace the question about the radial projections of $Y$ about points of $X$ with the question of whether a certain pair $(\mu,\nu)$ of Frostman measures have \textit{thin tubes}. This notion quantifies how well one measure ``sees" another; see Definition \ref{defn:thin-tubes-2}. 
    \item Initiate the bootstrapping scheme with Theorem B.1 of \cite{shmerkin2022non}. This states precisely that the required pair $(\mu,\nu)$ of $s$-Frostman measures have $\beta$-thin tubes for some $\beta > 0$, so there is no work to be done here.
    \item Argue that, if a pair $(\mu,\nu)$ of $s$-Frostman measures have $\sigma$-thin tubes for some $\beta \leq \sigma < s$, then in fact there is some $\eta = \eta(s,\sigma) > 0$ such that the pair have $(\sigma+\eta)$-thin tubes. It is important here that $\eta$ not shrink too rapidly as $\sigma \uparrow s$, i.e., that the bootstrapping scheme maintain enough ``momentum" to carry $\sigma$ to the target value $s$.
    \item From Steps 2 and 3, conclude a simple criterion\textemdash a measure version of the hypotheses of Theorem \ref{thm:radial-kaufman}\textemdash for $(\mu,\nu)$ to have $\sigma$-thin tubes for all $\sigma < s$.
    \item By an appropriate choice of measures $\mu$ on $X$ and $\nu$ on $Y$, conclude from Step 1 that \eqref{eq:radial-kaufman} holds.
\end{enumerate}

Step 3 is by far the most arduous of the five, although developing intuition for thin tubes is not so trivial as Steps 1 and 5 might suggest.

\subsection{The geometry of thin tubes} \label{ss:thin-tubes}

The definition of ``thin tubes" first appeared in \cite{shmerkin2021distance} for much the same purpose as in OSW. On the most basic level, the definition should seem natural: in order to understand the properties of the pushforward measure $\pi_x \nu \in \mathcal{P}(\bbs^{n-1})$, one ought to examine the properties of $\nu$ on (neighborhoods of) the fibers of $\pi_x$. On the other hand, the particulars of the definition can seem unmotivated, so we begin with a provisional definition based on \cite{shmerkin2021distance}.

For the remainder of this section we work in the closed unit ball $B^n \subset \R^n$ (or $B^2 \subset \R^2$, when the arguments do not generalize), but any sufficiently large disc containing positive-measure subsets of $X$ and $Y$ would do just as well.

\begin{defn} \label{defn:thin-tubes-1}
    Let $x \in B^n$ and $\nu \in \mathcal{P}(B^n)$. We say that $(x,\nu)$ have \define{$\bm{(\sigma,K,c)}$-thin tubes} if there exists a Borel set $F \subset \spt \nu$ such that $\nu(F) \geq c$ and
    \begin{equation} \label{eq:thin-tubes-1}
        \nu(T \cap F) \+\leq\+ K\delta^\sigma \qquad \text{for all open } \delta \text{-tubes } T \text{ containing } x
    \end{equation}
    and for all $\delta \leq 1$. When the constants are unimportant, we simply say that $(x,\nu)$ have \define{$\bm{\sigma}$-thin tubes}.
\end{defn}

The parameter $c \in (0,1]$ is the mass of $\spt \nu$ with the desired geometric property, and our only concern should be that it not stray too far from $1$. As we bootstrap from $\sigma$-thin tubes to $(\sigma+\eta)$-thin tubes, the constant $c$ incurs a loss that must not be left uncontrolled. However, the constant $K$ is unimportant for the same reason that the analogous constant in the definition of Frostman measure (cf.~Appendix \ref{Appendix B}) is frequently unimportant.

\begin{lem} \label{lem:frostman-iff-tubes}
    Suppose $x$ and $\spt \nu$ are positively separated and $\sigma > 0$. Then the pushforward measure $\pi_x \nu$ is $\sigma$-Frostman if and only if $(x,\nu)$ have $(\sigma,K,1)$-thin tubes. Consequently, $\mathcal{H}^\sigma(\pi_x(Y)) > 0$ if and only if $Y$ supports a probability measure $\nu$ such that $(x,\nu)$ have $\sigma$-thin tubes.
\end{lem}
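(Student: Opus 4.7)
The plan is to make precise the geometric correspondence between $\delta$-tubes through $x$ and spherical caps around the corresponding direction, then read off both the equivalence and its measure-theoretic consequence. Write $d := \operatorname{dist}(x,\spt\nu) > 0$, positive by hypothesis, and $D := \operatorname{diam}(B^n \cup \{x\})$. Since $d \leq |y-x| \leq D$ for every $y \in \spt\nu$, elementary trigonometry yields a constant $C = C(d,D) \geq 1$ such that
\begin{enumerate}[label=(\roman*)]
    \item for any $\delta$-tube $T$ containing $x$ with axis direction $e \in \mathbb{S}^{n-1}$, one has $T \cap \spt\nu \subset \pi_x^{-1}(B(e,C\delta))$, and
    \item for any cap $B(e,r) \subset \mathbb{S}^{n-1}$, the set $\pi_x^{-1}(B(e,r)) \cap B^n$ is contained in some $Cr$-tube through $x$ with axis direction $e$.
\end{enumerate}
Both (i) and (ii) reduce to the comparability $|\pi_x(y) - e| \sim |y-x|^{-1}\operatorname{dist}(y,\{x+te : t \geq 0\})$ and its inverse, which is uniform on $\spt\nu$ precisely because of positive separation.

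Given (i) and (ii), the equivalence is immediate. For the forward direction, assume $\pi_x\nu(B(e,r)) \leq C_0 r^\sigma$ for all $e$ and $r$. Then for any $\delta$-tube $T$ containing $x$ with axis direction $e$,
\[
\nu(T) \leq \nu(\pi_x^{-1}(B(e,C\delta))) = \pi_x\nu(B(e,C\delta)) \leq C_0 C^\sigma \delta^\sigma,
\]
so $(x,\nu)$ has $(\sigma,K,1)$-thin tubes with $K = C_0 C^\sigma$ and $F = \spt\nu$. Conversely, if $(x,\nu)$ has $(\sigma,K,1)$-thin tubes, fix a cap $B(e,r)$ and choose a $Cr$-tube $T \ni x$ in direction $e$ provided by (ii); then $\pi_x\nu(B(e,r)) \leq \nu(T) \leq K C^\sigma r^\sigma$, so $\pi_x\nu$ is $\sigma$-Frostman with constant $KC^\sigma$.

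For the ``consequently'' statement, first suppose $\nu \in \mathcal{P}(Y)$ has $(\sigma,K,c)$-thin tubes with $c > 0$; normalizing $\nu|_F$ to a probability measure $\nu'$ gives $(\sigma,K/c,1)$-thin tubes at $x$, whence $\pi_x\nu'$ is $\sigma$-Frostman on $\pi_x(Y)$ and $\mathcal{H}^\sigma(\pi_x(Y)) > 0$ by the mass distribution principle. Conversely, if $\mathcal{H}^\sigma(\pi_x(Y)) > 0$, Frostman's lemma (for analytic sets) supplies a $\sigma$-Frostman probability measure $\mu$ on $\pi_x(Y)$; a (universally) measurable section of $\pi_x|_Y$ then lifts $\mu$ to $\nu \in \mathcal{P}(Y)$ with $\pi_x\nu = \mu$, whereupon $(x,\nu)$ has $\sigma$-thin tubes by the equivalence above. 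The only real friction lies in invoking the correct measurable selection theorem to perform the lift; the rest is an elementary geometric dictionary combined with standard Frostman-lemma bookkeeping.
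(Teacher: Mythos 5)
Your tube/cap dictionary in (i)--(ii) is exactly the observation the paper makes (it phrases it as a comparability between the $\nu$-mass of a $\delta$-tube through $x$ and the $\nu$-mass of the cone of pitch $\delta$ with vertex $x$), so the equivalence between ``$\pi_x\nu$ is $\sigma$-Frostman'' and ``$(x,\nu)$ has $(\sigma,K,1)$-thin tubes'' follows the same route, just made more explicit. The paper's ``consequently'' sentence is a one-liner, so the measurable-selection machinery you bring in is a legitimate way to flesh it out. There is, however, a genuine gap in your backward direction: the lifted measure $\nu = s_*\mu$ may place positive mass in every punctured neighborhood of $x$, in which case $x$ and $\spt\nu$ are not positively separated --- and that is precisely the hypothesis needed to go from ``$\pi_x\nu$ is $\sigma$-Frostman'' to thin tubes (this is the direction of the equivalence that uses separation; the other direction does not). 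Without separation the implication can really fail: in $\R^2$, take
\[
\nu \;:=\; c\sum_{k\ge 1}\frac{1}{k^2}\,\omega_k,
\]
where $\omega_k$ is the uniform probability measure on the circle of radius $2^{-k}$ about $x$ and $c$ normalizes. Then $\pi_x\nu$ is the uniform measure on $\bbs^1$, hence $1$-Frostman, but for any $\delta$-tube $T$ through $x$,
\[
\nu(T)\;\ge\; c\sum_{2^{-k}\le\delta}\frac{1}{k^2}\;\gtrsim\;\frac{1}{\log(1/\delta)},
\]
which is not $O(\delta^\sigma)$ for any $\sigma>0$, so $(x,\nu)$ has no thin tubes at all. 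The fix is small and worth spelling out: since $\pi_x(Y\setminus\{x\})=\bigcup_{\epsilon>0}\pi_x\bigl(Y\cap B(x,\epsilon)^c\bigr)$, the hypothesis $\mathcal H^\sigma(\pi_x(Y))>0$ forces $\mathcal H^\sigma\bigl(\pi_x(Y\cap B(x,\epsilon)^c)\bigr)>0$ for some $\epsilon>0$; apply Frostman's lemma to that image and take your measurable section into $Y\cap B(x,\epsilon)^c$, which is positively separated from $x$ by construction. The rest of your argument then goes through unchanged.
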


\textit{Proof.} The second statement follows immediately from the first by Frostman's lemma, as we can always take $c = 1$ by restricting and renormalizing the measure. To prove the first statement, we observe that, since $\dist(x,\spt \nu) > 0$ and $\spt \nu$ is compact, the $\nu$-mass of a $\delta$-tube containing $x$ is always comparable to the $\nu$-mass of a cone of pitch $\delta$ with vertex at $x$. Hence, \eqref{eq:thin-tubes-1} is equivalent to the statement that $\pi_x \nu(D) \lesssim \delta^\sigma$ for every disc $D \subset \bbs^{n-1}$ of diameter $\delta$ and center incident to the axis of $T$, i.e., that $\pi_x \nu$ is $\sigma$-Frostman. \hfill $\square$

\vs{0.3}

\begin{figure}[h!]
    \centering
    \includegraphics[width=.6\textwidth]{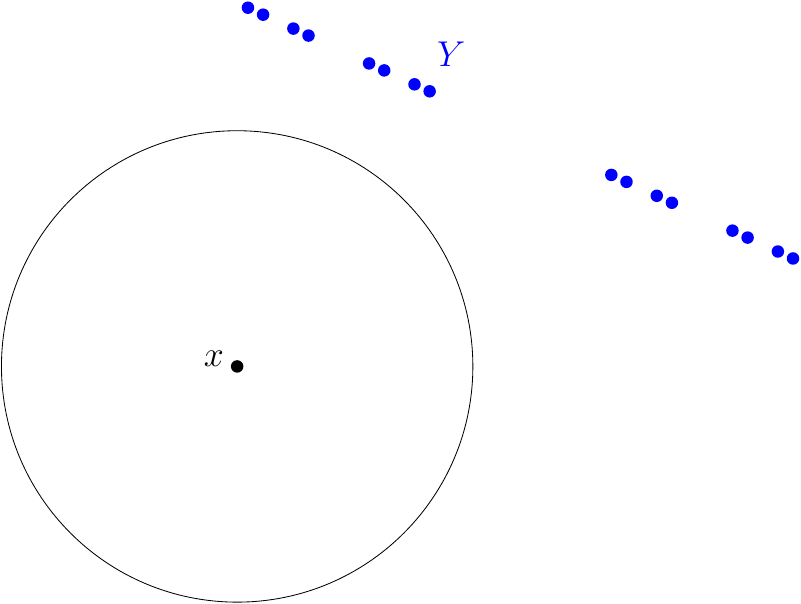}
    \caption{An illustration of a point $x$, and a measure supported on $Y$ such that $(x,\nu)$ has $\sigma$-thin tubes for some $\sigma>0$.}
    \label{fig:thin-tubes-1}
\end{figure}

With a sound intuition for what it means when a point and a measure have thin tubes, the generalization to pairs of measures having thin tubes should not be too daunting. Given a set $G$ contained in a product set $X \times Y$ and a point $x \in X$, denote by $G|_x := \{ y \in Y \!: (x,y) \in G \}$ the \textit{$x$-slice} of $G$.

\begin{defn} \label{defn:thin-tubes-2}
    Let $\mu,\nu \in \mathcal{P}(B^n)$. We say that $(\mu,\nu)$ have \define{$\bm{(\sigma,K,c)}$-thin tubes} if there exists a set $G \subset \spt \mu \times \spt \nu$ such that $(\mu \times \nu)(G) \geq c$ and, for each $x \in \spt \mu$,
    \begin{equation} \label{eq:thin-tubes-2}
        \nu(T \cap G|_x) \+\leq\+ K\delta^\sigma \qquad \text{for all open } \delta \text{-tubes } T \text{ containing } x
    \end{equation}
    and for all $\delta \leq 1$. We also say that any pair $(\tilde{\mu}, \tilde{\nu})$ of nonzero finite measures have $(\sigma,K,c)$-thin tubes when the normalized pair $\big( |\tilde{\mu}|^{-1} \tilde{\mu}, |\tilde{\nu}|^{-1} \tilde{\nu} \big)$ does.
\end{defn}

Note that Equations \eqref{eq:thin-tubes-2} and \eqref{eq:thin-tubes-1} are essentially identical: the former simply replaces $F$, which implicitly depends on $x$, with the $x$-slice $G|_x$. However, $(\mu,\nu)$ having $(t,K,c)$-thin tubes is weaker than $(x,\nu)$ having $(t,K,c)$-thin tubes for all $x \in \spt \mu$, as the definition does not constrain the $\nu$-measure of any \textit{individual} slice $G|_x$. We \textit{do} require the growth condition \eqref{eq:thin-tubes-1}-\eqref{eq:thin-tubes-2} to hold for all $x \in \spt \mu$, but the slices $G|_x$ only need to have $\nu$-measure $\geq c$ \textit{on average}. The notion of ``weak thin tubes" OSW mention in passing involves yet another relaxation of this sort, with \eqref{eq:thin-tubes-1}-\eqref{eq:thin-tubes-2} only being required for $x$ in a set of $\mu$-measure $\geq c$.

\begin{lem}\label{lem:KEY thin tubes}
    Suppose $\mu$ is a measure with $\spt\mu=X$ and $\nu$ is a measure with $\spt \nu = Y$. If $(\mu,\nu)$ have $(\sigma,K,c)$-thin tubes for some $\sigma,K,c > 0$, then
    \[
    \sup_{x\in X}\dim\pi_x(Y\setminus\{x\}) \ge \sigma.
    \]
\end{lem}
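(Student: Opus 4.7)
The plan is to extract from the thin-tubes data a single point $x_0 \in X$ at which the pushforward measure $\pi_{x_0}\nu$ (appropriately restricted) obeys a Frostman-type growth estimate with exponent $\sigma$, and then invoke the standard fact that the support of a nonzero finite $\sigma$-Frostman measure has Hausdorff dimension at least $\sigma$. Since Definition~\ref{defn:thin-tubes-2} already gives pointwise (in $x$) control of $\nu$-mass in $\delta$-tubes through $x$, and such tubes are exactly the preimages under $\pi_x$ of small caps on $\mathbb{S}^{n-1}$, the translation is essentially mechanical; the only real content is picking a good $x_0$ and converting between caps and tubes.

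\textbf{Selecting $x_0$ and a nonzero pushforward.} Let $G \subset X \times Y$ and $K, c > 0$ be as in Definition~\ref{defn:thin-tubes-2}. Since $(\mu \times \nu)(G) \ge c > 0$, Fubini's theorem yields that the set $\{x \in X : \nu(G|_x) > 0\}$ has positive $\mu$-measure; in particular it is nonempty. Choose any such $x_0$ and note first that $\nu(\{x_0\}) = 0$: indeed $\{x_0\} \subset T$ for every $\delta$-tube $T$ through $x_0$, so the thin-tube inequality \eqref{eq:thin-tubes-2} forces $\nu(\{x_0\}) \le K\delta^\sigma$ for all $\delta \in (0,1]$. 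Define $\nu_0 := \nu|_{G|_{x_0} \setminus \{x_0\}}$; this is a nonzero finite Borel measure supported in $Y \setminus \{x_0\}$, and the pushforward $\theta := (\pi_{x_0})_* \nu_0$ is therefore a nonzero finite measure on $\mathbb{S}^{n-1}$ with $\spt \theta \subset \pi_{x_0}(Y \setminus \{x_0\})$.

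\textbf{Caps become tubes.} For a spherical cap $D \subset \mathbb{S}^{n-1}$ of radius $\delta \in (0,1]$, the preimage $\pi_{x_0}^{-1}(D)$ is a cone with apex $x_0$ and half-opening angle $\lesssim \delta$. Since all relevant mass of $\nu_0$ sits inside $B^n$, where distances to $x_0$ are at most $2$, this cone is contained in an open tube of radius $\le 2\delta$ whose axis passes through $x_0$. Applying \eqref{eq:thin-tubes-2} with this tube yields
\[
\theta(D) \+=\+ \nu_0\bigl(\pi_{x_0}^{-1}(D)\bigr) \+\le\+ \nu(T \cap G|_{x_0}) \+\le\+ K(2\delta)^\sigma \+\lesssim_{K,\sigma}\+ \delta^\sigma,
\]
so $\theta$ is $\sigma$-Frostman on $\mathbb{S}^{n-1}$. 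The standard dimension bound for Frostman measures then gives $\dim \spt \theta \ge \sigma$, and hence $\dim \pi_{x_0}(Y \setminus \{x_0\}) \ge \sigma$, which suffices.

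\textbf{Main obstacle.} Honestly there is none: this lemma is the easy half of the equivalence between thin tubes and Frostman-type radial projections (a measure version of Lemma~\ref{lem:frostman-iff-tubes}), and the only minor points requiring care are (i) ruling out an atom at $x_0$ so that the pushforward is well-defined, and (ii) the elementary cone-versus-tube comparison inside $B^n$. The genuinely hard work of the paper lies entirely in verifying the \emph{hypothesis}, i.e.\ producing pairs $(\mu,\nu)$ with $\sigma$-thin tubes via the bootstrapping scheme sketched in Steps 1--5 above.
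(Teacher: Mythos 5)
Your argument is essentially the same as the paper's: Fubini to locate an $x_0$ with $\nu(G|_{x_0}) > 0$, the observation that the thin-tube bound forces $\nu(G|_{x_0}\cap\{x_0\})=0$ (this is where $\sigma > 0$ matters), restriction of $\nu$ to $G|_{x_0}\setminus\{x_0\}$, the cone-to-tube comparison showing the pushforward is $\sigma$-Frostman, and then the mass distribution principle. The one place where the paper takes a small extra step and you do not: rather than restricting directly to $G|_{x_0}\setminus\{x_0\}$, the paper first picks a compact set $C \subset G|_{x_0}\setminus\{x_0\}$ of positive $\nu$-measure and pushes forward $\nu|_C$. This ensures $\pi_{x_0}(C)$ is compact (hence Borel), so the mass distribution principle applies with no measurability caveats; with your choice, $\pi_{x_0}(G|_{x_0}\setminus\{x_0\})$ is a priori only analytic, which still works for Hausdorff dimension but requires a word. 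This is a cosmetic difference, and otherwise your proof is correct and matches the paper's route.
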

\begin{proof}
    This is essentially Remark 2.3 in OSW. For this result, the value of $K$ is not important: the only things that matter are $\sigma$ and $c>0$. By Fubini's theorem,
    \begin{align*}
        \int \nu(G|_x)\,d\mu(x) = (\mu\times \nu)(G) \ge c > 0.
    \end{align*}
    Hence, there exists $x\in X$ such that $\nu(G|_x)>0$. Here is where $\sigma > 0$ matters; in the definition of thin tubes, $x\in T$, so we have $\nu(G|_x\cap\{x\}) \le \nu(G|_x\cap T) \le K\cdot r^\sigma$. By taking $r$ arbitrarily small, we see that $\nu(G|_x\cap\{x\})=0$. Therefore, we may pick a compact subset $C\subset G|_x\setminus\{x\}\subset Y\setminus\{x\}$ such that $\nu(C)>0$. By the definition of thin tubes, we find that $\pi_x(\nu|_C)$ satisfies a $\sigma$-Frostman condition with the constant $K$ of $(\sigma,K,c)$-thin tubes. By Lemma \ref{lem:dim lower bound}, $\dim\pi_x(Y\setminus\{x\}) \ge \dim\pi_x(C) \ge \sigma$.
\end{proof}

\begin{figure}[h!]
    \centering
    \includegraphics[width=.55\textwidth]{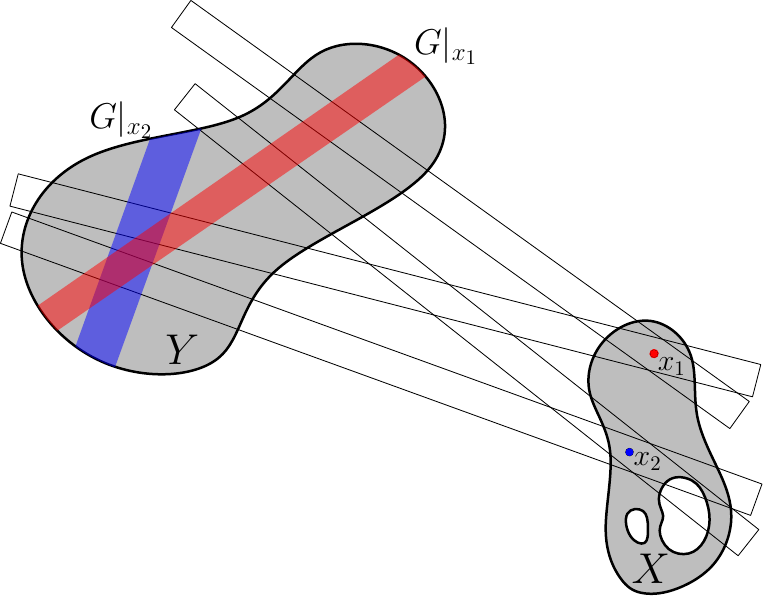}
    \caption{A schematic diagram of a pair $(\mu,\nu)$ of measures with $(\sigma,K,c)$-thin tubes. For each $x \in X$, we can discard (on average) no more than $(1-c) \nu(Y)$ of the mass of $Y$ and have $(x,\nu)$ with $(\sigma,K,c)$-thin tubes in the sense of Definition \ref{defn:thin-tubes-1}.}
    \label{fig:thin-tubes-2}
\end{figure}

% \begin{lem} \label{lem:tubes-to-projections}
%     If $\mu \in \mathcal{P}(X)$, $\nu \in \mathcal{P}(Y)$ are such that $(\mu,\nu)$ have $t$-thin tubes for some $t \in (0,1]$, then there exists $x \in X$ such that
%     \begin{equation*}
%         \mathcal{H}^t\big( \pi_x(Y \setminus \{x\}) \big) > 0.
%     \end{equation*}
% \end{lem}

% \textit{Proof.} Take $x \in X$ such that $\nu(G|_x) > 0$. Then \eqref{eq:thin-tubes-1} implies that $\nu|_{G|_x}$ has no atoms, so $\nu(G|_x \setminus \{x\}) > 0$ as well. Hence, there exists a compact set $K \subset G|_x \setminus \{x\}$ such that $\nu(K) > 0$ (with $\dist(x,K) > 0$). By Lemma \ref{lem:frostman-iff-tubes}, $\pi_x \nu|_K$ is $t$-Frostman, and since this measure is supported in $\pi_x(K) \subset \pi_x(Y \setminus \{x\})$, it follows that $\mathcal{H}^t(Y \setminus \{x\}) > 0$. \hfill $\square$

With the mechanics of thin tubes established, we can now undertake Step 1 of our Theorem 1.1 proof scheme and rephrase the result in terms of thin tubes. \textit{Per se}, this does not play a role in the logic and only comes to bear in Step 5, but without this initial step the entirety of OSW \S 2 will have no apparent relation to the problem at hand until the final few sentences.

The claim is that, to prove Theorem \ref{thm:radial-kaufman}, it suffices to show that a pair $(\mu,\nu)$ of $s$-Frostman measures have $\sigma$-thin tubes for all $\sigma < s \leq 1$. To see this, let $X,Y \subset \R^2$ be as in the theorem statement and $s < \min \{ \dim X, \dim Y \}$. Then, by Frostman's lemma, there exist $s$-Frostman probability measures $\mu$ on $X$ and $\nu$ on $Y$. By restricting the supports to compact sets and renormalizing, we can assume without loss of generality that $\dist(\spt \mu, \spt \nu) > 0$. If $\spt \nu$ is contained in a line $\ell$, then, since $X \not\subset \ell$ by hypothesis, we can project $Y$ radially about any $x \in X \setminus \ell$ to conclude that
\begin{equation*}
    \dim \pi_x(Y \setminus \{x\}) \geq \dim \pi_x(\spt \nu) = \dim \spt \nu \geq s.
\end{equation*}
(This is because the restricted projection $\pi_x |_\ell \!: \ell \to \bbs^1$ is locally bi-Lipschitz onto its image.) If this is not the case, then we will see in Step 4 that $(\mu,\nu)$ have $\sigma$-thin tubes for all $\sigma < \min \{ s, 1 \}$ (cf.~Lemma \ref{lem:lem2.8}). By Lemma \ref{lem:KEY thin tubes}, it follows that
\begin{equation*}
    \sup_{x \in X} \dim \pi_x(Y \setminus \{x\}) \geq \min \{ \sigma, 1 \}
\end{equation*}
for all $\sigma < \min \{ s, 1 \}$, and taking the supremum over such $\sigma$ gives the desired result.

This argument is a minor variant of the one appearing at the end of OSW \S 2. We now circle back to the thick of the proof of Theorem 1.1.

\subsection{A careful walk-through of the bootstrapping argument} \label{ss:thm1.1-bootstrapping}

Let us discuss the bootstrapping argument in detail, emphasizing how Steps 2, 3, and 4 lead to Step 5. Recall that the goal is to show that, if $\mu$ and $\nu$ are $s$-Frostman, then, under mild conditions, $(\mu,\nu)$ have $\sigma$-thin tubes for all $\sigma < s$. We do this by working our way up to larger values of $\sigma$ from the following black boxed lemma.

\begin{lem}[\cite{orponen2022kaufman} Proposition 2.4, \cite{shmerkin2022non} Theorem B.1] \label{lem:prop2.4}
    For all $C, \delta, \epsilon, s > 0$, there exist
    \begin{equation*}
        \beta := \beta(s) \in (0,s), \quad \tau := \tau(\epsilon,s) > 0, \quad \text{and} \quad K := K(C, \delta, \epsilon, s) > 0
    \end{equation*}
    such that the following holds. Let $\mu,\nu \in \mathcal{P}(B^2)$ be $s$-Frostman measures with positively separated supports. If each $\delta$-tube in $\R^2$ has $\nu$-mass at most $\tau$, then $(\mu,\nu)$ have $(\beta,K,1-\epsilon)$-thin tubes.
\end{lem}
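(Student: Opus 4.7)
The plan is to view this base case as a quantitative version of the classical Marstrand-type statement that, under a mild non-concentration hypothesis, radial projections of Frostman measures have positive-dimensional image. The single-scale tube-mass hypothesis $\nu(T) \leq \tau$ rules out the only real obstruction (concentration of $\nu$ on a line through each $x \in \spt \mu$), and the target regime is soft: $\beta$ is only required to be positive, not close to $s$, so we can afford rather lossy estimates.

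The key construction is a multi-scale ``good pair'' set. For each dyadic scale $\delta' \in (0,1]$ and each $x \in \spt \mu$, let $\mathcal{B}(x,\delta')$ be the collection of $\delta'$-tubes $T \ni x$ with $\nu(T) > K (\delta')^\beta$, and set $E(x,\delta') := \bigcup_{T \in \mathcal{B}(x,\delta')} T$. I would define
$$G := \{(x,y) \in \spt \mu \times \spt \nu \,:\, y \notin E(x,\delta') \text{ for every dyadic } \delta' \in (0,1]\}.$$
Two things must then be verified: (i) by Fubini together with a summable-in-$\delta'$ estimate on $\int \nu(E(x,\delta')) \, d\mu(x)$, one obtains $(\mu \times \nu)(G) \geq 1 - \epsilon$; (ii) the thin-tube bound \eqref{eq:thin-tubes-2} holds automatically from the construction, because if $T \ni x$ is a $\delta'$-tube and $T \in \mathcal{B}(x,\delta')$, then $T \cap G|_x = \varnothing$, while if $T \notin \mathcal{B}(x,\delta')$, then $\nu(T \cap G|_x) \leq \nu(T) \leq K (\delta')^\beta$ by definition.

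The crux is the incidence-type estimate $\int \nu(E(x,\delta'))\, d\mu(x) \lesssim \epsilon \cdot 2^{-j\gamma}$ for $\delta' = 2^{-j}$ and some $\gamma = \gamma(s)>0$, whose sum over $j$ delivers the $(1-\epsilon)$ mass bound. I would attack this by an $L^2$ / Cauchy--Schwarz argument over the collection of heavy tubes, transferring the hypothesis across scales: for $\delta' > \delta$, a $\delta'$-tube is covered by $O(\delta'/\delta)$ parallel $\delta$-tubes, each of $\nu$-mass $\leq \tau$; for $\delta' < \delta$, a $\delta'$-tube is contained in a $\delta$-tube, giving the crude bound $\nu(T) \leq \tau$. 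The $s$-Frostman properties of both $\mu$ and $\nu$ then let one count incidences between $\spt \mu$ and heavy tubes, producing the decay in $j$. The constant $K$ depends on $C, \delta, \epsilon, s$, swallowing all the losses.

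The main obstacle is the regime $\delta' < \delta$, where the tube hypothesis does not automatically decay with $\delta'$. To squeeze out the shrinking factor $(\delta')^\beta$, one must genuinely exploit the planar $s$-Frostman structure of $\nu$ through an $L^2$-energy estimate on radial projections (Mattila-type) together with the non-degeneracy afforded by $\tau$. A subtle but essential point is ensuring that the resulting $\beta$ depends on $s$ alone, independent of $\epsilon, \delta, C$: this is exactly what preserves the ``momentum'' of the bootstrapping scheme in Step 3 and allows $\sigma$ to be driven all the way up to $s$.
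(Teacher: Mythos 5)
The paper itself does not prove this lemma: it is explicitly taken as a black box, cited to \cite{shmerkin2022non} Theorem B.1, with the study guide noting that ``there is no work to be done here.'' So there is no in-paper proof to compare your sketch against. With that caveat, your outline is structurally reasonable and is in the spirit of the actual argument: the ``heavy tubes'' set $E(x,\delta')$, the definition of $G$ as the complement of the heavy-tube unions over dyadic scales, and the verification that $G$ then automatically furnishes $(\beta,K,c)$-thin tubes (heavy tubes miss $G|_x$; light tubes satisfy the bound by definition) is exactly the kind of construction one wants, and the small technicalities you elide (non-dyadic $r$, Borel measurability of $G$) are genuinely minor.

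The gap is that the entire content of the lemma lives in the one estimate you defer, namely that $\int \nu(E(x,\delta'))\,d\mu(x)$ is summable in $j$ for $\delta'=2^{-j}$, and in particular that this sum can be made $\le \epsilon$. For $\delta' \in [\delta,1]$ you are fine: by covering a $\delta'$-tube by $O(\delta'/\delta)$ parallel $\delta$-tubes and using $\nu(T)\le\tau$, one can choose $K = K(\delta,\tau) = K(C,\delta,\epsilon,s)$ so large that $E(x,\delta')=\varnothing$ outright in this range, exactly as you indicate. But for $\delta' < \delta$ the single-scale hypothesis gives only $\nu(T)\le\tau$ with no decay in $\delta'$, so to get any positive $\beta$ (let alone $\beta$ depending on $s$ alone) you must extract genuine decay from the $s$-Frostman structure of $\mu$ and $\nu$. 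This is not an easy Cauchy--Schwarz step; it is essentially Orponen's 2019 radial projection theorem in quantified form (which is what \cite{shmerkin2022non} Theorem B.1 actually is), and your sketch cites a ``Mattila-type $L^2$-energy estimate on radial projections'' without identifying which one or carrying it out. If you intend this to stand alone, you need to either reproduce that argument or cite a specific result that delivers the $\beta(s)>0$ lower bound on the Frostman exponent of $\pi_x\nu$ for $\mu$-typical $x$ under a $\nu$-tube-mass hypothesis. As written, the heart of the lemma is assumed rather than proved.
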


\begin{rem}
    The statement of the lemma is somewhat surprising. The enemy scenario to keep in mind is if $\mu$ and $\nu$ are both supported on a common line, then neither $(\mu,\nu)$ nor $(\nu,\mu)$ could have $\beta$-thin tubes for any $\beta > 0$, since radial projections of points on a line to a point on the same line is a set of at most two points. The hypothesis that $\nu$ does not give much mass to any $\delta$-tube rules out this pathological configuration, and the conclusion of Lemma \ref{lem:prop2.4} is that this is all we need to guarantee $(\mu,\nu)$ has $\beta$-thin tubes.
\end{rem}

%% Paragraph explaining what the lemma means:

The additional details provided in the statements in \cite{orponen2022kaufman} are not so important: uniform control of the parameters is necessary in \cite{orponen2022kaufman} Lemma 2.8, as that result is used iteratively in the ensuing corollaries, but Proposition 2.4 is only applied once, namely, as a base case (Step 2 in the proof scheme above).

\begin{lem}[\cite{orponen2022kaufman} Lemma 2.8, The ``key lemma"] \label{lem:lem2.8}
    For all $0 < \sigma < s \leq 1$, there exists $\eta > 0$ such that the following holds. Let $K,\epsilon > 0$ and let $\mu,\nu \in \mathcal{P}(B^2)$ be $s$-Frostman measures with positively separated supports. If $(\mu,\nu)$ have $(\sigma,K,1-\epsilon)$-thin tubes for some $\sigma \in [\beta,s)$, where $\beta$ is as in Lemma \ref{lem:prop2.4}, then in fact $(\mu,\nu)$ have $(\sigma+\eta,K',1-5\epsilon)$-thin tubes for some $K' > 0$. Furthermore, $\eta$ is bounded away from $0$ on compact subsets of $\{ (\sigma,s) \in (0,1]^2 \!: \sigma \in [\beta(s),s) \}$.
\end{lem}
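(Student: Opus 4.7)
The plan is to argue by contradiction, using the $\epsilon$-improved Furstenberg set bound (Theorem \ref{thm:epsilon-improved-furst}) as the main external input. Fix $\sigma \in [\beta, s)$ and a candidate $\eta > 0$ (to be chosen small). Let $G \subset \spt \mu \times \spt \nu$ be the set witnessing the $(\sigma, K, 1-\epsilon)$-thin tubes property. For each $\delta \in (0,1]$ and a threshold $K'$ to be chosen, define the bad set
\[
B_\delta \+:=\+ \{(x,y) \in G : y \in T \text{ for some } \delta\text{-tube } T \ni x \text{ with } \nu(T \cap G|_x) > K' \delta^{\sigma + \eta}\}.
\]
The goal is to show that $(\mu \times \nu)(B_\delta) \leq 4\epsilon$ uniformly in $\delta$; a standard pigeonholing over dyadic scales then packages all bad pairs into a single ``bad'' set of total $(\mu \times \nu)$-mass $\leq 4\epsilon$, and removing this set from $G$ yields the desired $(\sigma + \eta, K'', 1-5\epsilon)$-thin tubes witness.

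Suppose instead that $(\mu \times \nu)(B_\delta) > 4\epsilon$ at some scale $\delta$. By Fubini and a standard averaging argument, there is a set $X_* \subset \spt \mu$ with $\mu(X_*) \geq 2\epsilon$ on which $\nu(B_\delta|_x) \geq 2\epsilon$ for every $x \in X_*$. For such an $x$, the heavy $\delta$-tubes through $x$ carry total $\nu$-mass $\geq 2\epsilon$, and each carries mass $\leq K \delta^\sigma$ by the $(\sigma, K, 1-\epsilon)$-thin tubes hypothesis, so there are $\gtrsim \epsilon \delta^{-\sigma}$ many of them. Moreover, applying $\sigma$-thin tubes at every intermediate scale $r \geq \delta$ (by grouping bundles of $\delta$-tubes into a single tube of width $\sim r$) shows that the angular directions of these heavy tubes, call them $\Theta_x^\delta \subset \mathbb{S}^1$, satisfy the discretized Frostman condition $|\Theta_x^\delta \cap I|_\delta \lesssim (K/K') r^\sigma \delta^{-\sigma-\eta}$ for any arc $I$ of length $r \geq \delta$. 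Thus $\Theta_x^\delta$ is essentially a $(\delta, \sigma)$-set (with a loss governed by $\eta$).

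Combining these observations yields a Furstenberg configuration. The $s$-Frostman measure $\mu$ gives that $X_*$ contains a $(\delta, s')$-set for some $s' \in (\sigma, s]$ of cardinality $\gtrsim \epsilon \delta^{-s'}$, and through each of its points there is a $(\delta, \sigma)$-family of heavy tubes. Dualizing via the point-line duality reviewed in \S \ref{ss:Furstenberg Intro}, the tubes correspond to dual points lying on dual lines $\mathbf{D}(x)$, and the resulting set of dual points $P$ is (discretely) an $(\sigma, s')$-Furstenberg set of points. Applying the $\delta$-discretized version of Theorem \ref{thm:epsilon-improved-furst}, we obtain $\epsilon_0 = \epsilon_0(\sigma, s') > 0$ and $|P|_\delta \gtrsim \delta^{-2\sigma - \epsilon_0}$; transferring back, there are $\gtrsim \delta^{-2\sigma - \epsilon_0}$ distinct heavy $\delta$-tubes. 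On the other hand, each heavy tube carries $\nu$-mass $\geq K' \delta^{\sigma + \eta}$, and double-counting incidences (with the multiplicity bound on tubes through a fixed $y \in \spt \nu$ coming either from the $s$-Frostman property of $\nu$ or from a second application of $\sigma$-thin tubes now for $\mu$ on the dual side) produces an inequality that, for $\eta < \epsilon_0/2$ and $\delta$ small, is inconsistent with $\nu$ being a probability measure. This yields the contradiction, so $\eta$ can be chosen depending only on $\sigma, s$, and the uniformity statement on compact sets follows because both $\epsilon_0(\sigma, s')$ and $\beta(s)$ from Lemma \ref{lem:prop2.4} are uniform on compact subsets of the parameter range.

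The main obstacle is the final incidence-counting step: one must balance the loss in $\eta$ (coming from the Frostman-ness of $\Theta_x^\delta$), the gain from the $\epsilon$-improved Furstenberg bound $\epsilon_0$, and the mass budget imposed by $\nu$ being a probability measure, all while preserving a total mass loss of only $4\epsilon$ rather than something catastrophic like $O(\epsilon^{1/2})$. A secondary technical point is regularizing the heavy tubes and the bad set of points $X_*$ so that they actually form a $(\delta, \sigma)$- and $(\delta, s')$-set in the right quantitative sense (as opposed to merely a large collection), which typically requires a standard dyadic pigeonholing applied scale by scale. Finally, one must verify that $\sigma$ being bounded below by $\beta(s)$ allows the Furstenberg hypothesis $t > s$ in Theorem \ref{thm:epsilon-improved-furst} to be satisfied, since otherwise the $\epsilon$-improvement is not available.
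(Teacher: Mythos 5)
Your outline follows the right general shape (contradiction, discretize to a single scale, build a discrete dual Furstenberg configuration, invoke the $\epsilon$-improved Furstenberg bound), but there is a genuine gap at the final ``double-counting incidences'' step: the argument as written does not address the possibility that the heavy tubes are \emph{concentrated}. A heavy $\delta$-tube $T$ through $x$ is only required to satisfy $\nu(T\cap G|_x)\ge K'\delta^{\sigma+\eta}$; nothing in your construction prevents this mass from sitting entirely inside a tiny ball $B_T$ of radius $\delta^\kappa$ (with $\kappa$ close to $1$), and nothing prevents \emph{many} heavy tubes from different base points $x$ from all containing the \emph{same} small ball. In that situation the tubes share almost all of their $\nu$-mass, and a naive double-count (``$\#\{\text{tubes}\}\times\text{mass per tube} \le$ something controlled by $\nu$ being a probability measure'') is simply false as an inequality --- the right-hand side undercounts because it doesn't account for multiplicity, and the multiplicity can be as large as the number of tubes. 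Neither the $s$-Frostman property of $\nu$ (which controls ball masses, not tube-through-a-point counts) nor a dual $\sigma$-thin-tubes hypothesis for $(\nu,\mu)$ (which controls $\mu(T\cap G|^y)$, a mass bound, not a cardinality bound) directly supplies the multiplicity bound your sentence asserts.

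The paper resolves this by splitting into two cases. In the \emph{non-concentrated} case (no $\delta^\kappa$-ball captures a third of a tube's relevant $\nu$-mass), each heavy tube can be split into two pieces $Y_{T,1},Y_{T,2}$ that are $\gtrsim\delta^\kappa$-separated, and a classical ``two-ends'' counting argument --- the number of $\delta$-tubes containing a fixed pair of $\delta^\kappa$-separated points is geometrically bounded --- is what turns the Furstenberg cardinality lower bound into a contradiction. This is essential: without the two-ends device the Furstenberg lower bound alone is not in tension with anything. In the \emph{concentrated} case, the Furstenberg estimate is not used at all; instead the paper builds a set $H'$ of pairs $(x,y)$ where mass concentrates in balls $B(y,2\delta^\kappa)$, pigeonholes to find a common annulus radius $\xi$, fixes a single slice $H''|^y$, and derives a contradiction by playing the $(\nu,\mu)$ thin-tubes hypothesis off against the $s$-Frostman bound on $\mu$ applied to $B(y,2\xi)$. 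Your proposal never constructs this second branch, and it implicitly discards the $(\nu,\mu)$ thin-tubes input that the paper's proof pulls into play precisely here --- note that OSW's Step 1 intersects witness sets for both $(\mu,\nu)$ and $(\nu,\mu)$ at the outset, so the $(\nu,\mu)$ hypothesis is baked in from the start and is not merely a dual reformulation you can recover at will.

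Two smaller points. First, the uniform-in-$\delta$ scheme ``show $(\mu\times\nu)(B_\delta)\le 4\epsilon$ for every $\delta$, then union over dyadic $\delta$'' does not automatically produce a set of mass $\le 4\epsilon$: the union over infinitely many scales can be much larger, and one needs the geometric decay built into OSW's choice of $r_0$, $r_2$ (so that $\sum_{r\le r_0} r^\eta < \epsilon/2$) to make the losses summable; this is exactly what turns ``some bad scale exists'' into ``a single bad dyadic scale $\mathrm{r}$ with $(\mu\times\nu)(H_{\mathrm r})\ge 2\mathrm{r}^\eta$.'' Second, the Furstenberg input in the paper is the $\delta$-discretized Theorem~\ref{thm:furstenberg}, not a formal ``discretization'' of the Hausdorff-dimension statement; you should cite the discrete theorem directly, since its uniformity of $\epsilon(\sigma,s)$ on compacta is precisely what gives the claimed uniformity of $\eta$.
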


The conclusion of Lemma \ref{lem:lem2.8} tells us that the property of ``having $(\sigma, K, c)$-thin tubes'' is \textit{stable} under increases in the exponent $\sigma$ in Equation \eqref{eq:thin-tubes-2}, by which we mean that one can increase $\sigma$ to $\sigma + \eta$ at the expense of increasing $K$ and decreasing $c$. Remember: the parameter $K \geq 1$ is some multiplicative constant, and $0 \leq c \leq 1$ is the proportion of the support of $\mu \times \nu$ where  the Frostman-type inequality \eqref{eq:thin-tubes-2} holds. See also Table \ref{tab:parameters}.

A detailed treatment of Lemma \ref{lem:lem2.8} follows below in \S \ref{ss:thm1.1-key-lemma}. For now, we take it at face value and use it to conclude the existence of $\sigma$-thin tubes for all $\sigma < s$. In OSW, this is done over the course of two corollaries and a lemma, but Corollary 2.18 (a ``simple criterion for thin tubes") and Lemma 2.22 can reasonably be tucked away under the more pivotal Corollary 2.21 (a ``simpler criterion for thin tubes," which in fact encompasses Lemma 2.22 as a special case). The primary distinction between Corollaries 2.18 and 2.21 is the way in which they capture the idea that neither measure is concentrated along lines, but it is the non-discrete phrasing in terms of measure along a common line that we used in \S \ref{ss:thin-tubes} above.

\begin{cor}[\cite{orponen2022kaufman} Corollary 2.21] \label{cor:cor2.21}
    If $\mu,\nu \in \mathcal{P}(\R^2)$ are $s$-Frostman measures for some $s \in (0,1]$ and if $\mu(\ell) \nu(\ell) < 1$ for every line $\ell \subset \R^2$, then $(\mu,\nu)$ have $\sigma$-thin tubes for all $\sigma < s$.
\end{cor}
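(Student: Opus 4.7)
The plan is to start from the base case Lemma \ref{lem:prop2.4} and iterate the bootstrapping Lemma \ref{lem:lem2.8}, after a preliminary reduction to a convenient configuration of sub-measures. The first step is to reduce to compactly supported, positively separated sub-measures. Restricting $\mu,\nu$ to large balls preserves $s$-Frostmanity and the line hypothesis $\mu(\ell)\nu(\ell) < 1$; then, because the line hypothesis forbids both measures from simultaneously concentrating on a common line, a routine decomposition into small balls lets us pass to positive sub-measures $\mu', \nu'$ with disjoint compact supports. Since Definition \ref{defn:thin-tubes-2} allows for non-normalized finite measures, it suffices to prove the conclusion for $(\mu', \nu')$.

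Next I would verify the tube-mass hypothesis of Lemma \ref{lem:prop2.4}: for some $\delta, \tau > 0$ prescribed by that result, every $\delta$-tube must have $\nu'$-mass at most $\tau$. The line hypothesis enters here. By upper semicontinuity of $\ell \mapsto \nu'(\ell)$ over the compact family of lines meeting $\mathrm{supp}\,\nu'$, either $\sup_\ell \nu'(\ell) < 1$, or some single line $\ell_0$ carries all of $\nu'$; in the latter case $\mu'(\ell_0) < 1$, so swapping $\mu' \leftrightarrow \nu'$ reduces to the former case. A compactness and covering argument converts the bound $\sup_\ell \nu'(\ell) < 1$ into the $\delta$-tube bound, possibly after a further pigeonhole extraction of a sub-measure of $\nu'$ supported away from the worst lines.

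With the tube-mass hypothesis verified, Lemma \ref{lem:prop2.4} yields $(\beta(s), K, 1-\epsilon)$-thin tubes for a freely chosen $\epsilon > 0$. I would then iterate Lemma \ref{lem:lem2.8}: each application pushes the exponent from $\sigma_k$ to $\sigma_{k+1} = \sigma_k + \eta(\sigma_k, s)$, with the constant $K$ worsening and the mass parameter degrading from $1-5^k\epsilon$ to $1-5^{k+1}\epsilon$. Because $\eta$ is uniformly bounded below on each compact subinterval $[\beta, s-\kappa]$ of $(0,s)$, finitely many iterations suffice to push past any prescribed $\sigma^* < s$; choosing $\epsilon$ sufficiently small at the outset keeps $1 - 5^N \epsilon > 0$ over the $N$ required steps. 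The resulting $(\sigma^*, K^*, c^*)$-thin tubes for $(\mu', \nu')$ transfer back to $(\mu,\nu)$ with $c^*$ rescaled by $\mu(A')\nu(B') > 0$, and letting $\sigma^* \uparrow s$ completes the proof.

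The main obstacle is the tube-mass verification in the middle step. Converting the qualitative, global hypothesis $\mu(\ell)\nu(\ell) < 1$ into the quantitative local bound $\nu'(T) \le \tau$ requires both the symmetric swap (to handle the enemy case of one measure concentrated on a single line) and a covering argument to upgrade the $\sup_\ell \nu'(\ell) < 1$ gap to a bound by the small $\tau$ prescribed by Lemma \ref{lem:prop2.4}. The subsequent bootstrapping iteration, though delicate in its bookkeeping of the constants $K_k$ and $\epsilon_k$, is essentially mechanical once the base case is in hand.
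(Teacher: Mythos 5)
Your plan has a genuine gap in the middle step, where you try to convert the hypothesis $\mu(\ell)\nu(\ell) < 1$ into the tube--mass hypothesis of Lemma~\ref{lem:prop2.4}. That lemma requires, for a prescribed $\tau > 0$, some $\delta > 0$ such that \emph{every} $\delta$-tube has $\nu$-mass at most $\tau$. A compactness argument (cited in the paper as \cite{orponen2018dimension}~Lemma~2.1) shows that such a $\delta$ exists for every $\tau > 0$ exactly when $\nu(\ell) = 0$ for \emph{every} line $\ell$. The weaker bound $\sup_\ell \nu'(\ell) < 1$ that you extract does not suffice: if $\nu'(\ell_0) = 1/2$, say, then for every $\delta > 0$ the $\delta$-tube around $\ell_0$ has $\nu'$-mass at least $1/2$, which exceeds any small $\tau$. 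An $s$-Frostman measure with $s \le 1$ can certainly charge lines this way, and $\mu(\ell)\nu(\ell) < 1$ does not forbid it. Your proposed ``pigeonhole extraction of a sub-measure supported away from the worst lines'' does not rescue the argument either: after deleting finitely many heavy lines the remaining sub-measure might still be supported on (countably many) lines, or might have arbitrarily small total mass; and since the bootstrapping in Lemma~\ref{lem:lem2.8} needs both $(\mu,\nu)$ and $(\nu,\mu)$ to have thin tubes, the extraction would need to work symmetrically in $\mu$ and $\nu$ as well.

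The paper avoids this by splitting into three cases, and only the case where \emph{both} $\mu(\ell) = 0 = \nu(\ell)$ for every line $\ell$ goes through Lemma~\ref{lem:prop2.4} and the bootstrapping Lemma~\ref{lem:lem2.8}. The other two cases --- where one measure charges some line but the other does not give that line full mass --- are handled by direct, elementary arguments that bypass the bootstrapping machinery entirely: if $\nu(\ell) > 0$ and $\mu(\ell) < 1$, one restricts $\mu$ to a compact set off $\ell$ and uses that $\pi_x|_\ell$ is locally bi-Lipschitz, so $\pi_x\nu|_\ell$ is automatically $s$-Frostman; if $\mu(\ell) > 0$, one restricts $\nu$ to a compact set off $\ell$ and runs a Kaufman/Marstrand-type energy integral over $x \in \ell$ with respect to $\mu|_\ell$ to get $\sigma$-Frostman pushforwards $\mu|_\ell$-a.e. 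You need one of these arguments (or an equivalent) to handle the line-charging cases; routing everything through Lemma~\ref{lem:prop2.4} cannot work.

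Apart from this, your handling of the base case, the iteration bookkeeping, and the transfer of thin tubes from sub-measures back to $(\mu,\nu)$ are all in line with the paper's strategy for the non-concentration case.
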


\textit{Proof.} Our strategy is to work by cases according to the relationship between $\mu(\ell)$ and $\nu(\ell)$.

\textsc{Case 1.} Suppose that $\nu(\ell) > 0$ for some line $\ell \subset \R^2$ and that $\mu(\ell) < 1$. Any compact $K \subset \R^2 \setminus \ell$ with $\mu(K) > 0$ yields a pair $(\mu|_K,\nu|_\ell)$ with $s$-thin tubes (hence, $\sigma$-thin tubes for all $\sigma < s$): the measure $\pi_x \nu|_\ell$ for points $x \in K$ will always be $s$-Frostman because $\pi_x|_\ell \!: \ell \to \bbs^1$ is locally bi-Lipschitz, so Lemma \ref{lem:frostman-iff-tubes} applies.

\textsc{Case 2.} Suppose that $\mu(\ell) > 0$ for some line $\ell \subset \R^2$. Then $\nu(\ell) < 1$ by hypothesis, so there is a compact set $K \subset \R^2 \setminus \ell$ such that $\nu(K) > 0$.

The idea now is to appeal to OSW Lemma 2.22, which in turn cites Kaufman's proof of Marstrand's projection theorem. In that proof, one takes a $\sigma$-Frostman measure $\tilde{\nu}$ on the given set $E \subset \R^2$ ($0 < \sigma < s = \dim E$) and an $s$-Frostman measure $\tilde{\mu}$ on the exceptional set in $\bbs^1$ and proceeds to show that
\begin{equation} \label{eq:marstrand-energy}
    \int_{\bbs^1} I_\sigma(\pi_e\tilde{\nu}) \, d\tilde{\mu}(e) \lesssim I_\sigma(\tilde{\nu}) < \infty,
\end{equation}
whence $\dim \pi_e(E) \geq \sigma$ for $\tilde{\mu}$-a.e.~$e \in \bbs^1$. One derives \eqref{eq:marstrand-energy} from the geometric inequality
\begin{equation*}
    \tilde{\mu}\big( \{ e \in \bbs^1 \!: |\pi_e(y_1) - \pi_e(y_2)| \leq \delta \} \big) \lesssim \left( \frac{\delta}{|y_1 - y_2|} \right)^s,
\end{equation*}
and the same reasoning used to obtain this inequality gives the analogous inequality
\begin{equation*}
    \mu\big( \{ x \in \ell \!: |\pi_x(y_1) - \pi_x(y_2)| \leq \delta \} \big) \lesssim \left( \frac{\delta}{|y_1 - y_2|} \right)^s,
\end{equation*}
from which
\begin{equation*}
    \int_\ell I_\sigma(\pi_x\nu|_K) \, d\mu(x) \lesssim I_\sigma(\nu|_K) < \infty
\end{equation*}
follows by analogy with \eqref{eq:marstrand-energy}. See Mattila \cite{mattila2015fourier} Chapter 5 for a detailed account. This implies that $\pi_x\nu|_K$ is $\sigma$-Frostman for $\mu|_\ell$-a.e.~$x \in \ell$, so it follows from Lemma \ref{lem:frostman-iff-tubes} that $(\mu|_\ell,\nu|_K)$ has $\sigma$-thin tubes.

% \textsc{Case 3.} Now we may assume $\nu(\ell) = 0$ for all lines $\ell\subset\R^2$. Assume for some $\ell\subset\R^2$, $\mu(\ell)>0$. Since $\nu(\ell) = 0$, there is a compact set $K\subset\R^2\setminus \ell$ such that $\nu(K)>0$. By the same reasoning used in Case 2, $(\mu|_\ell, \nu|_K)$ (and hence $(\mu,\nu)$) have $\sigma$-thin tubes for every $\sigma < s$.

\textsc{Case 3.} Suppose that $\nu(\ell) = 0 = \mu(\ell)$ for every line $\ell \subset \R^2$. We reduce to the case of OSW Corollary 2.18, wherein the tube condition of Lemma \ref{lem:lem2.8} enters the picture. (Notice that this ``key lemma" was not used in either of the first two cases: the desired conclusion was reached by more general considerations.) To that end, assume without loss of generality (restricting and renormalizing the measures as necessary) that $\dist(\spt \mu, \spt \nu) > 0$. A short compactness argument shows that, since $\mu$ and $\nu$ vanish on every line, we have for every $\tau > 0$ some $\delta > 0$ such that
\begin{equation*}
    \max \, \{ \mu(T), \nu(T) \} \leq \tau \qquad \forall \+ \delta\text{-tubes } T \subset \R^2
\end{equation*}
(see \cite{orponen2018dimension} Lemma 2.1).

In particular, let $\tau = \tau(\bar{\epsilon},s) > 0$ and $\beta = \beta(s) > 0$ be as in Lemma \ref{lem:prop2.4}, where $\bar{\epsilon} = \bar{\epsilon}(\sigma,s) \in (0,\tfrac{1}{10})$ is such that $1 - 5^N \bar{\epsilon} \geq \tfrac{1}{2}$, $N$ satisfies $N \geq (\sigma-\beta)/\eta$, and $\eta = \eta(\sigma,s) > 0$ is as in Lemma \ref{lem:lem2.8}. Since $\eta$ is undefined if $\sigma < \beta$, we assume without loss of generality that $\sigma \geq \beta$. With $C > 0$ sufficiently large, Lemma \ref{lem:prop2.4} then implies that $(\mu,\nu)$ and (by symmetry) $(\nu,\mu)$ have $(\beta,K,1-\bar{\epsilon})$-thin tubes for some $\beta,K > 0$. Supposing we have shown that these pairs have $(\beta+k\eta, K', 1-5^k\bar{\epsilon})$-thin tubes for some integer $0 \leq k < N$ and some $K' > 0$, then, by our choice of $\bar{\epsilon}$ and $N$, the hypotheses of Lemma \ref{lem:prop2.4} are satisfied, so there are in fact $(\beta+(k+1)\eta, K'', 1-5^{k+1}\bar{\epsilon})$-thin tubes for some $K'' > 0$. By induction, $(\mu,\nu)$ and $(\nu,\mu)$ have $(\beta+N\eta, K, 1-5^N\bar{\epsilon})$-thin tubes for some $K > 0$. Since we chose $\beta+N\eta > \sigma$ and $1-5^N\bar{\epsilon} \geq \tfrac{1}{2}$, it follows that $(\mu,\nu)$ and $(\nu,\mu)$ have $(\sigma,K,\tfrac{1}{2})$-tubes, as desired. \hfill $\square$

The discussion at the end of \S \ref{ss:thin-tubes} presents the implication of the intended Theorem \ref{thm:radial-kaufman} from Corollary \ref{cor:cor2.21}. The only remaining hole in our argument is the proof of the ``key lemma."

\subsection[The epsilon-improvement of Wolff's Furstenberg set bound]{The \for{toc}{\texorpdfstring{$\epsilon$}{epsilon}}\except{toc}{\texorpdfstring{$\bm{\epsilon}$}{epsilon}}-improvement of Wolff's Furstenberg set bound} \label{ss:discrete-furstenberg}

The workhorse driving the proof of Lemma \ref{lem:lem2.8} is the discretized $\epsilon$-improvement of Wolff's celebrated Furstenberg set bound in \cite{wolff1999recent}. This discretized result in fact proves both the main results in \cite{orponen2021hausdorff} and merits appreciation in its own right. Let $|\+\cdot\+|_\delta$ denote the \textit{$\delta$-covering number}, i.e., the (minimum) number of balls of diameter $\delta$ required to cover a given set.

\begin{defn}\label{def:delta-s-set}
    Let $(X,d)$ be a metric space, let $\delta,C > 0$, and let $s \geq 0$. We call a set $P \subset X$ a \define{$\bm{(\delta,s,C)}$-set} if
    \begin{equation*}
        |P \cap B(x,r)|_\delta \leq C \+ r^s \+ |P|_\delta \qquad \forall \+ x \in X \text{ and } \forall r \+ \geq \delta.
    \end{equation*}
\end{defn}

When viewed at scales greater than $\delta$, such a set $P$ resembles the $\delta$-neighborhood of an (at least) $s$-dimensional set. In what follows, we will be especially interested in ``$(\delta,s,C)$-sets of tubes." The affine Grassmannian $\mathcal{A}(2,1)$ of lines in the plane enjoys the following metric space structure: if $\ell_i \in \mathcal{A}(2,1)$ has direction vector $e_i$ and closest point to the origin $a_i$ for $i = 1,2$, then the distance between $\ell_1$ and $\ell_2$ is defined by
\begin{equation*}
    d(\ell_1, \ell_2) := \| \pi_{\Span e_1} - \pi_{\Span e_2} \| + |a_1 - a_2| = \angle (e_1 ,e_2) + \vert a_1 - a_2 \vert
\end{equation*}
where $\| \cdot \|$ is the operator norm and $|\+\cdot\+|$ is the Euclidean distance. For more on the metric space structure of the affine Grassmannian, see Chapter 3 of \cite{mattila1995geometry}.

For a fixed $\delta$, we may then call a family of $\delta$-tubes a \textit{$(\delta,s,C)$-set of tubes} if the axes of the tubes form a $(\delta,s,C)$-set of lines, considered as a set in the metric space $\big( \mathcal{A}(2,1), d \big)$. With this language, the $\epsilon$-improvement of the Furstenberg set estimate can be phrased as follows.

\begin{thm}[\cite{orponen2022kaufman} Theorem 2.7, \cite{orponen2021hausdorff} Theorem 1.3] \label{thm:furstenberg}
    Given $\sigma \in (0,1)$ and $s \in (\sigma,2)$, there exist $\delta_0(\sigma,s), \epsilon(\sigma,s) > 0$ such that the following holds for all $0 < \delta \leq \delta_0$: if $X \subset B^2$ is a $(\delta,s,\delta^{-\epsilon})$-set of points and if for each $x \in X$ there is a $(\delta,\sigma,\delta^{-\epsilon})$-set $\mathcal{T}_x'$ of $\delta$-tubes containing $x$, then
    \begin{equation*}
        \left| \bigcup_{x \in X} \mathcal{T}_x' \right|_\delta \geq \delta^{-2\sigma-\epsilon}.
    \end{equation*}
    Moreover, $\epsilon$ can be taken uniform on compact subsets of $\{ (\sigma,s) \in \R^2 \!: \sigma \in (0,1), s \in (\sigma,2) \}$.
\end{thm}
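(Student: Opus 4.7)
My plan to establish Theorem \ref{thm:furstenberg} is to combine point-line duality with a multi-scale bootstrap powered by an $\epsilon$-improved exceptional set estimate for orthogonal projections. Applying the duality maps $\mathbf{D}, \mathbf{D}^{\ast}$, the $(\delta, s, \delta^{-\epsilon})$-set $X$ becomes a $(\delta, s, \delta^{-\epsilon})$-set $\mathcal{L} = \mathbf{D}(X)$ of lines in $\mathcal{A}(2, 1)$, and each $(\delta, \sigma, \delta^{-\epsilon})$-set $\mathcal{T}_x'$ of tubes through $x$ becomes a $(\delta, \sigma, \delta^{-\epsilon})$-set of points on the dual line $\mathbf{D}(x)$. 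Since $\mathbf{D}$ is locally bi-Lipschitz, $|\bigcup_{x \in X} \mathcal{T}_x'|_\delta$ is comparable to $|P|_\delta$, where $P$ is the union of all the dual points, and the target becomes $|P|_\delta \gtrsim \delta^{-2\sigma - \epsilon}$. This is precisely the discretized Furstenberg set estimate in point form.

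The Wolff baseline gives $|P|_\delta \gtrsim \delta^{-2\sigma}$ by an $L^2$ double-count of incidences between the lines in $\mathcal{L}$ and balls covering $P$, using that two $\delta$-tubes meeting at an angle $\theta$ overlap in a region of area $\lesssim \delta^2/\theta$. To extract the $\delta^{-\epsilon}$ gain, I would introduce $N = N(\sigma, s)$ intermediate dyadic scales $\delta \leq \Delta_N < \cdots < \Delta_1 \leq 1$, pigeonhole at each scale to extract a uniformly branching sub-collection of lines and per-line point sets, and boost the incidence exponent by a small additive gain at each step. The engine providing this per-block gain is the $\epsilon$-improved Kaufman-type projection estimate of Orponen--Shmerkin \cite{orponen2021hausdorff}: for $\sigma \in (0, 1)$ and $s \in (\sigma, 2)$, there exists $\eta = \eta(\sigma, s) > 0$ such that any $(\delta, s, \delta^{-\epsilon_0})$-set $A \subset B^2$ which is not concentrated near a single line satisfies $|\pi_\theta A|_\delta \geq \delta^{-\sigma - \eta}$ outside a $(\delta, \sigma)$-set of exceptional directions $\theta \in \mathbb{S}^1$. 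Applying this theorem to $P$ in the directions determined by the dual tube axes forces a contradiction with any assumed failure of the target bound, since $P$ would otherwise have to be anomalously compressed under too many projections simultaneously.

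The main obstacle will be the projection exceptional set estimate itself, whose proof rests on Bourgain's discretized sum-product theorem and intricate multi-scale pigeonholing; a bare-hands argument seems unlikely to succeed here, so I would treat it as a carefully quantified blackbox. A secondary but delicate issue is bookkeeping: each pigeonholing and each projection step degrades the Frostman constants by factors polynomial in $\delta^{-\epsilon}$, so one must choose the Furstenberg $\epsilon$ small compared to the uniform gain $\eta$ furnished by the projection theorem on compact subsets of $\{(\sigma, s) : \sigma \in (0, 1),\, s \in (\sigma, 2)\}$, so that $N$ iterations of the bootstrap preserve the final $\delta^{-2\sigma - \epsilon}$ bound. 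The uniformity of $\epsilon$ on compact subsets claimed in the conclusion then transfers directly from the corresponding uniformity of $\eta$ in the projection theorem.
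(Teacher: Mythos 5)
Your plan has a genuine circularity at its core. You propose to prove Theorem \ref{thm:furstenberg} (which is Theorem 1.3 of \cite{orponen2021hausdorff}) by a multi-scale bootstrap whose engine is ``the $\epsilon$-improved Kaufman-type projection estimate of Orponen--Shmerkin \cite{orponen2021hausdorff},'' namely the statement that a $(\delta,s,\delta^{-\epsilon_0})$-set has $\delta$-projections of size $\gtrsim \delta^{-\sigma-\eta}$ outside a $(\delta,\sigma)$-set of exceptional directions. But in \cite{orponen2021hausdorff} the logical dependency runs in exactly the opposite direction: the discretized incidence estimate (their Theorem 1.3, i.e., the statement you are trying to prove) is the underlying result, and \emph{both} the $\epsilon$-improved Furstenberg bound (their Theorem 1.1) and the $\epsilon$-improved exceptional set estimate for orthogonal projections (their Theorem 1.2) are deduced from it by short duality arguments of the kind sketched in Appendix \ref{Appendix C} of this study guide. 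So the projection estimate you want to invoke as a blackbox is a downstream consequence of Theorem \ref{thm:furstenberg}, not an independent input, and feeding it into a bootstrap for Theorem \ref{thm:furstenberg} begs the question. This is not a mere bookkeeping flaw: in its discretized $(\delta,\sigma)$-exceptional-set form, the projection estimate you write down is essentially the point--line dual of the conclusion $\left|\bigcup_x\mathcal T_x'\right|_\delta\geq\delta^{-2\sigma-\epsilon}$.

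For context, the study guide treats Theorem \ref{thm:furstenberg} as a pure blackbox and does not reproduce its proof, so there is no internal proof here to compare against. In \cite{orponen2021hausdorff} the actual engine supplying the $\epsilon$-gain is not a Kaufman-type exceptional set estimate but Bourgain's discretized sum-product/projection theorem (which gives a gain for \emph{every} positive-dimensional set of directions, not merely outside a set of dimension $\sigma$), applied inside a multi-scale pigeonholing scheme that first reduces to an approximately Ahlfors--David regular configuration and then extracts the sum-product gain at one well-chosen intermediate scale. Your opening moves---duality, the Cauchy--Schwarz/Wolff baseline $|P|_\delta\gtrsim\delta^{-2\sigma}$, dyadic pigeonholing into branching-regular trees---are consistent with the true shape of the argument, but the per-block gain must come from the sum-product input, not from a Kaufman improvement that is itself a corollary of the theorem. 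To repair the proposal, replace the cited engine with Bourgain's discretized projection theorem (or the underlying sum-product estimate) and verify that a single application at one scale, rather than a projection-theorem bootstrap, already yields the uniform $\epsilon(\sigma,s)$.
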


The set $\bigcup_{x \in X} \mathcal{T}_x'$ should be understood as a discrete $(\sigma,s)$-Furstenberg set, and the conclusion of Theorem \ref{thm:furstenberg} is that these sets have ``dimension" at least $2\sigma + \epsilon$. See also \S \ref{ss:epsilon-improvement} below. The final statement about the uniformity of $\epsilon$ is crucial, as it in turn leads to the uniformity of $\eta$ in Lemma \ref{lem:lem2.8}\textemdash a detail that stars prominently in the proof of Corollary \ref{cor:cor2.21} (OSW Corollary 2.18).

\subsection{Proof of the key lemma} \label{ss:thm1.1-key-lemma}

The idea behind Lemma \ref{lem:lem2.8} is as follows. Suppose that the conclusion fails, say, that $(\mu,\nu)$ does not have $(\sigma+\eta)$-thin tubes with the prescribed parameters. A lengthy but rudimentary construction then produces a $(\sigma,s)$-Furstenberg set of tubes to which Theorem \ref{thm:furstenberg} applies. Roughly, this is done as follows. Begin with the ``bad tubes" afforded by our contradiction hypothesis. The number, sizes, and arrangement of the tubes are all unknown, but, by loosening our definition of ``bad" (essentially, relaxing the parameters $K$ and $c$ in Definition \ref{defn:thin-tubes-2}), we can construct a large collection of bad tubes of uniform width. A density argument shows that most of these tubes are ``non-concentrated," but this non-concentrated property implies that the tubes form a Furstenberg set. From there, a careful counting (``discrete energy") argument implies that the number of tubes is inconsistent with the $\nu$-measure required by our contradiction hypothesis.

It is finally time to work through the proof of Lemma \ref{lem:lem2.8} rigorously. Our purpose here is twofold: to break the argument down into steps, and to fill in details omitted in \cite{orponen2022kaufman}. Neither purpose supplants that of the original proof, through which the reader is encouraged to work carefully sooner than later. To make the dependencies clear among the various constants that arise, we frequently refer to Table \ref{tab:parameters} below.

\define{Proof of Lemma \ref{lem:lem2.8}.} \textsc{Step 1.} We set up the proof by contradiction. Since $\mu$ and $\nu$ play symmetric roles in the statement and conclusion, suppose for a contradiction that $(\mu,\nu)$ do not have $(\sigma+\eta, K', 1-5\epsilon)$-thin tubes, where $\eta$ and $K'$ are explicit functions of $K$, $C$, $s$, and $\sigma$. (These ``explicit functions" depend on $\epsilon$\textemdash which we suppress due to its eventual dependence on $s$ and $\sigma$ in Corollary \ref{cor:cor2.21}\textemdash and on several other parameters introduced on p.~11 of OSW. The important point is that one could in principle prescribe its value at the very beginning of the proof; cf.~Table \ref{tab:parameters}.) Letting $X := \spt \mu$ and $Y := \spt \nu$, we may apply the definition of $(\sigma, K, 1-\epsilon)$-thin tubes to obtain sets $G' \subset X \times Y$ and $G'' \subset Y \times X$ as in the definition of thin tubes for $(\mu,\nu)$ and $(\nu,\mu)$, respectively. Swapping $x$- and $y$-variables in $G''$ and intersecting with $G'$ yields a set $G \subset X \times Y$ with $(\mu \times \nu)(G) > 1 - 2\epsilon$ such that
\begin{equation} \label{eq:sigma-thin-tubes}
\begin{aligned}
    & \nu(T \cap G|_x) \leq K \+ r^\sigma \quad \forall \+ r > 0 \ \text{ and } \ \forall \+ r\text{-tubes } T \text{ containing } x, \\
    & \mu(T \cap G|^y) \leq K \+ r^\sigma \quad \forall \+ r > 0 \ \text{ and } \ \forall \+ r\text{-tubes } T \text{ containing } y.
\end{aligned}
\end{equation}

\textsc{Step 2a.} We begin the construction of a ``tame" family of ``bad" tubes. For each $x \in X$ and dyadic $r \leq 1$, begin with the family $\mathcal{T}_{x,r}''$ of $r$-tubes $T$ supporting our contradiction hypothesis, i.e., such that
\begin{equation} \label{eq:reverse-frostman}
    \nu(T \cap G|_x) \geq K' \+ r^{\sigma+\eta}.
\end{equation}
\textit{Some} such tube exists by hypothesis, but not necessarily with $r$ dyadic\textemdash a possibility we contend with later. To pass from $\mathcal{T}_{x,r}''$ to a ``well-behaved" surrogate, start with an $r$-separated family $\mathcal{T}^r$ of $2r$-tubes with $|\mathcal{T}^r| \sim r^{-2}$ such that the following holds: for each $r$-tube $T \subset \R^2$, the intersection $T \cap B^2$ lies in at least $1$ and at most (say) $50$ tubes in $\mathcal{T}^r$. Such a family exists by picking an $r$-net of directions, and a tiling of space by parallel $2r$-tubes, one tiling for each direction in the net.

Next we pass from $\mathcal{T}^r$ to a minimal subset $\mathcal{T}_{x,r}' \subset \mathcal{T}^r$ with the property that, for each $T'' \in \mathcal{T}_{x,r}''$, there exists $T' \in \mathcal{T}_{x,r}'$ such that $T'' \cap B^2 \subset T'$. The only reason for bringing $\mathcal{T}^r$ into the picture was to demonstrate how to choose $\mathcal{T}_{x,r}'$ to be $r$-separated. Notice that, whereas $|\mathcal{T}^r| \sim r^{-2}$, we have
\begin{equation} \label{eq:number-of-tubes}
    |\mathcal{T}_{x,r}'| \lesssim r^{-\sigma-\eta}.
\end{equation}
To see this, notice that, since each tube in $\mathcal{T}_{x,r}'$ contains a tube $T$ satisfying \eqref{eq:reverse-frostman}, that inequality holds for all $T \in \mathcal{T}_{x,r}'$ as well. (Although $\mathcal{T}_{x,r}'$ is a family of $2r$-tubes, we do not replace the $r$ in \eqref{eq:reverse-frostman} with $2r$.) Since $\mathcal{T}_{x,r}'$ at worst forms (say) a $10C$-fold cover of $\spt \nu$ (where $C$ is taken sufficiently large that $\tfrac{1}{C} \leq \dist(\spt \mu, \spt \nu)$), the sum of the $\nu$-masses of the tubes is $\leq 10C \sim 1$, from which \eqref{eq:number-of-tubes} follows. (This $10C$\textemdash an unimportant constant\textemdash follows from OSW Equation (2.12) with implicit constant $10$ by taking the infimum over all $y \in \spt \nu$. See \cite{mattila1995geometry} Chapter 3 and \cite{mattila2015fourier} Chapter 5.)

\textsc{Step 2b.} To accompany our family $\mathcal{T}_{x,r}'$ of bad tubes, we construct a set $H$ to serve as a substitute for $G$. As the argument in OSW is quite thorough, we do not reproduce the logic but simply recall the conclusions: for dyadic $r \in (0,1]$ and
\begin{equation*}
    H_r := \left\{ (x,y) \in G \!: y \in \bigcup \mathcal{T}_{x,r}' \right\}, \qquad H := \bigcup_{r \in 2^{-\N}} H_r,
\end{equation*}
we have $(\mu \times \nu)(H) \geq 1 - 3\epsilon$. Note that one can dispense with the rescaling of $K'$ by replacing the $r$-tube $T$ in their argument with a $2r$-tube in some $\mathcal{T}_{x,r}'$, and this also makes the chain of implications somewhat clearer.

\textsc{Step 3.} We begin constructing a discretized Furstenberg set with base points taken from a set $\upbold{X}$ (defined below) and tubes taken from the families $\mathcal{T}_{x,r}'$, $x \in \upbold{X}$. Let $r_0$, $r_1$, and $r_2$ be as in Table \ref{tab:parameters}. Since $K' \+ r_0^{\sigma+\eta} \geq 1$ and $|\nu| = 1$, Equation \eqref{eq:reverse-frostman} cannot hold for any $r > r_0$. Hence, $\mathcal{T}_{x,r}' = \varnothing$ for all such $r$ and, since $H_r |_x \subset \bigcup \mathcal{T}_{x,r}'$, the set $H_r$ is likewise empty. Writing
\begin{equation*}
    H = \bigcup_{r_0 \geq r \in 2^{-\N}} H_r
\end{equation*}
and recalling that
\begin{equation*}
    \sum_{r_0 \geq r \in 2^{-\N}} r^\eta < \frac{\epsilon}{2}
\end{equation*}
by our choice of $r_2$ and $r_0 \leq r_2$, we conclude from the lower bound $(\mu \times \nu)(H) \geq \epsilon$ that there exists some dyadic ${\rm r} \leq r_0$ such that $(\mu \times \nu)(H_{{\rm r}}) \geq 2\+{\rm r}^{\sigma+\eta}$. (Breaking from the notation of OSW, we distinguish this scale ${\rm r}$ from arbitrary values of $r \in (0,1]$ by omitting the italics. However, to avoid confusion, we hereafter use $\rho$ as a dummy index where normally we might use $r$.)

With this value of ${\rm r}$ in hand, let
\begin{equation*}
    \upbold{X} := \{ x \in X \!: \nu(H_{\rm r} |_x) \geq {\rm r}^\eta \}
\end{equation*}
and, for $x \in \upbold{X}$,
\begin{equation*}
    \mathcal{T}_x := \big\{ T \in \mathcal{T}_{x,{\rm r}}' \!: \nu(T \cap H_{\rm r} |_x) \geq {\rm r}^{\sigma + 3\eta} \big\} \quad \text{and} \quad Y_x := (H_{\rm r} |_x) \cap \bigcup \mathcal{T}_x. 
\end{equation*}
To obtain $Y_x$ from $H_{\rm r} |_x$, we are subtracting off at most $|\mathcal{T}_{x,{\rm r}}'| \lesssim {\rm r}^{-\sigma-\eta}$ tubes, each of mass $< {\rm r}^{\sigma + 3\eta}$, per the definition of $\mathcal{T}_x$. Since $\nu(H_{\rm r} |_x) \geq {\rm r}^\eta$ by our choice of $x \in \upbold{X}$, it follows that
\begin{equation} \label{eq:yx-lower-bd}
    \nu(Y_x) \gtrsim {\rm r}^\eta - {\rm r}^{-\sigma-\eta} \cdot {\rm r}^{\sigma + 3\eta} = {\rm r}^\eta - {\rm r}^{2\eta} \gg {\rm r}^{2\eta}.
\end{equation}
In addition, the relations $T \cap Y_x = T \cap H_{\rm r} |_x \subset T \cap G|_x$ and the thin tube condition \eqref{eq:sigma-thin-tubes} on $(\mu,\nu)$ give
\begin{equation} \label{eq:osw-eq2.15}
    {\rm r}^{\sigma + 3\eta} \leq \nu(T \cap H_{\rm r} |_x) \leq \nu(T \cap G|_x) \leq K\+{\rm r}^\sigma \leq r_0^{-\eta} {\rm r}^\sigma \leq {\rm r}^{\sigma - \eta},
\end{equation}
$T \in \mathcal{T}_x$. (Recall the definitions of $r_0$ and ${\rm r}$.) Using the lower bound \eqref{eq:yx-lower-bd} and summing the upper bound \eqref{eq:osw-eq2.15} over all $T \in \mathcal{T}_x$ yields
\begin{equation*}
    {\rm r}^{-\sigma+3\eta} \lesssim {\rm r}^{-\sigma + \eta} \+ \nu(Y_x) \lesssim |\mathcal{T}_x| \leq |\mathcal{T}_{x,{\rm r}}'| \lesssim {\rm r}^{-\sigma-\eta}.
\end{equation*}
As noted in OSW, the inequality $\nu\big( T^{(\rho)} \cap Y_x \big) \lesssim \rho^\sigma {\rm r}^{-\eta}$ holds for all $\rho \in [r,1]$ and $T \in \mathcal{T}_x$, where $T^{(\rho)}$ is the $\rho$-neighborhood of $T$ (i.e., an $(r+\rho)$-tube). Denoting by $\mathcal{L}_x \subset \mathcal{A}(2,1)$ the set of axial lines of the tubes in $\mathcal{T}_x$, we conclude that, for all $\ell \in \mathcal{A}(2,1)$, the inequalities
\begin{align*}
    |\mathcal{L}_x \cap B(\ell,\rho)|_{\rm r} &\leq \big| \big\{ \ell' \in \mathcal{L}_x \!: \ell' \cap B^2 \subset \ell^{(\rho)} \big\} \big|_{\rm r} \sim \big| \big\{ T' \in \mathcal{T}_x \!: T' \cap B^2 \subset T^{(\rho)} \big\} \big| \\
    &\leq \rho^\sigma {\rm r}^{-\sigma-2\eta} \lesssim \rho^\sigma {\rm r}^{-5\eta} |\mathcal{T}_x| \sim \rho^\sigma {\rm r}^{-5\eta} |\mathcal{L}_x|_{\rm r}
\end{align*}
hold up to a rescaling of the metric on $\mathcal{A}(2,1)$. (Note that, since $\mathcal{L}_x$ is ${\rm r}$-separated, its ${\rm r}$-covering number is proportional to its cardinality.) This is precisely what it means for $\mathcal{T}_x$ to be an $({\rm r}, \sigma, {\rm r}^{-5\eta})$-set of tubes.

\textsc{Step 4.} We divide the analysis of $\bigcup_{x \in \upbold{X}} \mathcal{T}_x$ into two cases. With $\kappa$ as in Table \ref{tab:parameters}, say that a tube $T \in \mathcal{T}_x$ is \textit{concentrated} if there is an ${\rm r}^\kappa$-ball $B_T \subset \R^2$ such that
\begin{equation} \label{eq:concentrated}
    \nu(T \cap B_T \cap Y_x) \geq \frac{1}{3} \+ \nu(T \cap Y_x).
\end{equation}
This means that at least one third of the $\nu$-mass of $T$ is contained in a fairly small ball (with radius not much greater than that of $T$)\textemdash a condition that holds when $T$ intersects $Y_x$ ``transversely."

This leads to our first case. Suppose there is a set $\upbold{X}' \subset \upbold{X}$ with $\nu(\upbold{X}') \geq \tfrac{1}{2} \nu(\upbold{X}) \geq \tfrac{1}{2} \+ {\rm r}^\eta$ such that, for all $x \in \upbold{X}'$, the set $\mathcal{T}_x' \subset \mathcal{T}_x$ of tubes that are \textit{not} concentrated (i.e., such that \eqref{eq:concentrated} \textit{fails} for every ${\rm r}^\kappa$-ball $B_T$) comprises at least half of $\mathcal{T}_x$. Since $\mu$ is $s$-Frostman, the quantitative form of the mass distribution principle yields the lower bound $\mathcal{H}_\infty^s(\upbold{X}') \gtrsim \tfrac{1}{C} \+ {\rm r}^\eta$ on the Hausdorff content of $\upbold{X}'$.

We can now apply the discrete Frostman's lemma of F\"{a}ssler--Orponen (Lemma \ref{lem:discrete-frostman} below) with $A = \upbold{X}'$ and $\rho = {\rm r}$ to obtain the stipulated $({\rm r}, s, {\rm r}^{-2\eta})$-set $P \subset \upbold{X}'$. Since each $\mathcal{T}_x$ is an $({\rm r}, \sigma, {\rm r}^{-5\eta})$-set of tubes and $|\mathcal{T}_x'| \geq \tfrac{1}{2} |\mathcal{T}_x|$, each $\mathcal{T}_x$ is an $({\rm r}, \sigma, 2{\rm r}^{-5\eta})$-set of tubes. Thus, $\mathcal{T}' := \bigcup_{x \in P} \mathcal{T}_x'$ is a discretized $(\sigma,s)$-Furstenberg set. The ${\rm r}$-separation property of $\mathcal{T}'$ then combines with Theorem \ref{thm:furstenberg} to produce the estimate
\begin{equation} \label{eq:furstenberg}
    |\mathcal{T}'| \gtrsim {\rm r}^{-2\sigma - \sqrt{\eta}},
\end{equation}
as $\sqrt{\eta} \gg 5\eta$.

Removing a well-chosen ${\rm r}^\kappa$-ball from each $T \in \mathcal{T}'$, we find an ${\rm r}^\kappa$-separated pair $Y_{T,1}, Y_{T,2} \subset T$ with $\nu(Y_{T,j}) \geq \tfrac{1}{3} {\rm r}^{\sigma+3\eta} \geq {\rm r}^{\sigma+4\eta}$. The remainder of the proof that this non-concentrated property leads to a contradiction is a straightforward computation to which the reader is referred in OSW. The upshot is that, on elementary geometric grounds, there cannot be many well-spaced tubes containing a given pair of not-too-close points, and this basic upper bound on the number of tubes containing points of $Y_{T,1}$ and $Y_{T,2}$ is enough to contradict \eqref{eq:furstenberg}. For further examples and diagrams of this classic ``two-ends argument," see \cite{orponen2015packing} Theorems 1.6 and 1.8; or, for the higher-dimensional analog, \cite{bushling2022packing}.

\textsc{Step 5.} By Step 4, there is a set $\upbold{X}' \subset \upbold{X}$ with $\mu(\upbold{X'}) \geq \tfrac{1}{2} \mu(\upbold{X})$ such that, for all $x \in \upbold{X}'$, the set $\mathcal{T}_x' \subset \mathcal{T}_x$ of tubes that \textit{are} concentrated comprises at least half of $\mathcal{T}_x$. With
\begin{equation*}
    H' := \big\{ (x,y) \in \R^2 \!: x \in \upbold{X}', \ \exists \+ T \in \mathcal{T}_x' \text{ s.t.}\ y \in T \cap B_T \cap Y_x \big\},
\end{equation*}
we have the lower bound $(\mu \times \nu)(H') \gtrsim {\rm r}^{7\eta}$. To see this, observe that one can approximate $(\mu \times \nu)(H')$ by considering each $x \in \upbold{X}'$, summing $\nu(T \cap B_T \cap Y_x)$ for each $T \in \mathcal{T}_x'$, and integrating with respect to $d\mu(x)$. The one issue is that each $y \in Y_x$ can lie in many distinct tubes, but this overcounting does not pose an issue because this number is bounded by the constant $10C$ (cf.~Step 2a). The ``undergraph" estimate
\begin{equation*}
    (\mu \times \nu)(H') \geq \frac{1}{10C} \mu(\upbold{X}') \cdot \inf_{x \in \upbold{X}'} |\mathcal{T}_x'| \cdot \inf_{x \in \upbold{X}', T \in \mathcal{T}_x'} \nu(T \cap B_T \cap Y_x)
\end{equation*}
follows at once, from which the lower bound $\sim{\rm r}^{7\eta}$ follows by the estimates in Step 3.

The trick now is to locate an annulus centered about a point of $Y$ such that the density of the annulus conflicts with the Frostman condition on $\nu$. We first identify the correct radius of the annulus. Notice that, if $(x,y) \in H'$, then $B_T \subset B(y,2{\rm r}^k)$. This pairs with the concentration hypothesis on $\mathcal{T}_x'$ to give \newpage
\begin{align*}
    \nu\big( T(x,y) \cap B(y,2{\rm r}^\kappa) \big) &\geq \nu\big( T(x,y) \cap B(y,2{\rm r}^\kappa) \cap Y_x \big) \\
    &\geq \nu\big( T(x,y) \cap B_{T(x,y)} \cap Y_x \big) \\
    &\geq \tfrac{1}{3} \nu(T(x,y) \cap Y_x) \geq \tfrac{1}{3} r^{\sigma+3\eta},
\end{align*}
where the final bound comes \eqref{eq:osw-eq2.15}. On the other hand, since we chose $3\eta < s - \sigma$ and ${\rm r}^\eta \leq r_1 \leq \tfrac{1}{6C}$ (cf.~Table \ref{tab:parameters}),
\begin{equation*}
    \nu(B(y,{\rm r})) \leq C \+ {\rm r}^s < C \+ {\rm r}^{\sigma + 3\eta} \leq C \+ r_0^\eta \+ {\rm r}^{\sigma + 3\eta} \leq \tfrac{1}{6} {\rm r}^{\sigma + 3\eta} \leq \tfrac{1}{2} \nu\big( T(x,y) \cap B(y,2{\rm r}^\kappa) \big).
\end{equation*}
Thus, we have a discrepancy in the $\nu$-measure in balls at different orders of magnitude: large balls contain disproportionately more mass than small ones, so there exists a dyadic scale $\xi(x,y)$ between the small scale ${\rm r}$ and the large scale ${\rm r}^\kappa$ at which the annulus $A(y,\xi(x,y),$ $2\xi(x,y))$ contains mass not too much less than that of $T(x,y) \cap B(y,2{\rm r}^\kappa)$\textemdash say,
\begin{equation*}
    \nu\big( A(y,\xi(x,y),2\xi(x,y)) \big) \geq {\rm r}^{\sigma+4\eta}.
\end{equation*}
We showed that the number of pairs $(x,y) \in H'$ is $\gtrsim {\rm r}^{7\eta}$, which is much greater than ${\rm r}^\kappa - {\rm r} \sim {\rm r}^\kappa$, so the elementary pigeonhole principle yields a $\xi \in [{\rm r}, {\rm r}^\kappa]$ independent of $x$ and $y$ such that
\begin{equation*}
    (\mu \times \nu)(H'') \geq {\rm r}^{8\eta}, \quad \text{where} \quad H'' := \{ (x,y) \in H' \!: \xi(x,y) = \xi \}.
\end{equation*}
This is more than we need, as we require a single $y \in Y$ such that $\mu(H''|^y) \geq {\rm r}^{8\eta}$.

\vs{0.3}

\begin{table}[h!]
\centering
\begin{tabular}{cl}
    \multicolumn{2}{c}{\textbf{Table of Constants}} \\[0.2cm]
    \hline \\[-0.25cm]
    \textbf{Symbol} & \textbf{Specification} \\[0.15cm]
    $\epsilon$ &  \multicolumn{1}{p{9.5cm}}{\raggedright for the purposes of this argument, an arbitrarily small number $< \tfrac{1}{10}$} \\[0.55cm]
    $\eta = \eta(\sigma,s)$ & \multicolumn{1}{p{9.5cm}}{\raggedright a sufficiently small constant in terms of $\sigma$ and $s$, say, \hs{1.9} $\min \+ \left\{ \epsilon(\sigma,s), \+ \tfrac{s-\sigma}{4}, \+ \tfrac{1}{2} \! \left( \frac{s-\sigma}{14 - 8(s-\sigma)} \right)^2 \right\}, \hs{1.9}$ where $\epsilon(\sigma,s)$ is as in Theorem \ref{thm:furstenberg} (and \textit{not} the $\epsilon$ above)} \\[1.35cm]
    $K' = K'(K,C,\sigma,s)$ & \multicolumn{1}{p{9.5cm}}{\raggedright a sufficiently large constant in terms of $K$, $C$, $\sigma$, and $s$, say, \hs{2.75} $\max \+ \left\{ K^{1/\eta}, \+ \tfrac{1}{r_2}, \+ \tfrac{1}{r_0^{\sigma+\eta}} \right\}$} \\[0.8cm]
    $\kappa = \kappa(\sigma,s)$ & $\tfrac{14\eta}{s-\sigma}$ \\[0.15cm]
    ${\rm r} = {\rm r}(K,C,\sigma,s)$ & a dyadic number $\leq r_0$ such that $(\mu \times \nu)(H_{{\rm r}}) \geq 2\+{\rm r}^\eta$ \\[0.15cm]
    $r_0 = r_0(K,C,\sigma,s)$ & $\min \big\{ K^{-1/\eta}, r_1, r_2 \big\}$ \\[0.15cm]
    $r_1 = r_1(C,\sigma,s)$ & \multicolumn{1}{p{9.5cm}}{\raggedright a dyadic scale less than $\tfrac{1}{(6C)^{1/\eta}}$ at which the discrete Frostman lemma (\cite{fassler2014restricted} Lemma A.1) can be applied} \\[0.55cm]
    $r_2 = r_2(\sigma,s)$ & \multicolumn{1}{p{9.5cm}}{\raggedright a sufficiently small number that $\sum_{r_2 \geq r \in 2^{-\N}} r^\eta < \tfrac{\epsilon}{2}$, say, $2^{(\log \eta\epsilon - 2)/\eta}$; its only use is in defining $r_0$}
\end{tabular}
\vs{0.6}
\caption{A table of all the constants introduced in the proof of the ``key" Lemma \ref{lem:lem2.8}, the main hurdle in obtaining Theorem \ref{thm:radial-kaufman}.}
\label{tab:parameters}
\end{table}

\vs{-0.15} Although the remainder of the proof is daunting at a glance, enough detail is presented in clear enough language that we refer the reader to OSW to complete Step 5 and, hence, the proof of the key lemma. We only lay out the main point, which is this: the contradiction in the geometry of $\upbold{X}'$ arises from a conflict between the thin tubes condition on $(\nu,\mu)$\textemdash the only time this hypothesis is used\textemdash and the $s$-Frostman condition on $\mu$. Naturally, balls and tubes have very different geometries, so this requires a little work, but the setup is already in place.

We selected a slice $H''|^y$ with large $\mu$-mass, and $H''$ was defined so that $H''|^y$ is covered by tubes through $y$ with mass concentrated in a very specific region\textemdash in an annulus centered at $y$ with inner radius $\xi$ and outer radius $2\xi$. Application of the thin tube condition to each of these tubes supplies an upper bound on the mass of the tubes and, hence, a lower bound on the number of tubes. On the other hand, since these tubes span a well-spaced set of directions about $y$, the $\nu$-mass of these tubes in $B(y,2\xi)$ is bounded in a nontrivial way by $\nu(B(y,2\xi))$ itself. It is this ball to which the Frostman bound applies, yielding an upper bound on the number of tubes inconsistent with the lower bound.

This proves that the set $\upbold{X}$ constructed in Step 3 cannot exist, whereupon we must reject the hypothesis that $(\mu,\nu)$ do not have $(\sigma+\eta,K',1-\epsilon)$-thin tubes. \hfill $\square$

\subsection[The role of epsilon]{The role of \for{toc}{\texorpdfstring{$\epsilon$}{epsilon}}\except{toc}{\texorpdfstring{$\bm{\epsilon}$}{epsilon}}} \label{ss:epsilon-improvement}

On the one hand, the role of $\epsilon$ in the proof of Lemma \ref{lem:lem2.8} is clear: the increment $\eta$ is some fraction of $\epsilon$, and this particular value is the ``right" one for the self-evident (and evidently unsatisfying) reason that the non-existence of $(\sigma+\eta)$-thin tubes entails the existence of a set of tubes with ``dimension" lower than what is permitted by the Furstenberg set bound. In this capacity, there is nothing so magical about the $\epsilon$. The important fact is that, if we \textit{do not} have thin tubes, then there is a family of ``bad" tubes of small ``dimension." This family of tubes may be concentrated or non-concentrated, and in the latter case, any sufficiently strong incidence estimate for tubes will lead to a contradiction. The cutoff for ``sufficiently strong" just so happens to be Wolff's bound from \cite{wolff1999recent}, with any strictly stronger estimate sufficing for our purpose.

On the other hand, that a mere $\epsilon$ makes the difference between truth or falsity of Theorem \ref{thm:radial-kaufman}\textemdash a \textit{qualitative} distinction\textemdash is perhaps bewildering. Why could we not weave another small parameter into the argument (say, in the Frostman condition on $\nu$) and exploit that margin for error so that we might work with Wolff's Furstenberg set bound instead? In fact, one \textit{could}, but the uniform control of $\eta$ in $\sigma$ and $s$ would be lost. It was this uniformity that guaranteed the convergence of the sequence
\begin{gather*}
    \beta + \eta(\beta,s), \ \beta + \eta(\beta,s) + \eta(\beta + \eta(\beta,s), s), \\
    \beta + \eta(\beta,s) + \eta(\beta + \eta(\beta,s), s) + \eta\big( \beta + \eta(\beta,s) + \eta(\beta + \eta(\beta,s), s), s \big), \ ...
\end{gather*}
to $s$ and not to some smaller value.

\section{The Falconer-type estimate, Theorem 1.2} \label{s:Section 3}

The following result is the ``Falconer-type" radial projection theorem of OSW.
\begin{thm}[\cite{orponen2022kaufman} Theorem 1.2]
    Let $X,Y\subset\R^2$ be Borel sets with $X\ne\varnothing$ and $\dim Y > 1$. Then
    \[
    \sup_{x\in X}\dim \pi_x(Y\setminus\{x\}) \ge \min \{\dim X + \dim Y - 1, 1\}.
    \]
\end{thm}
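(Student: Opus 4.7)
The plan is to mirror the proof of Theorem \ref{thm:radial-kaufman}, adapting the thin-tubes machinery to exploit the extra hypothesis $\dim Y > 1$. By the reduction at the end of \S\ref{ss:thin-tubes}, it suffices to show that whenever $\mu \in \mathcal{P}(X)$ is $s$-Frostman with $s < \dim X$ and $\nu \in \mathcal{P}(Y)$ is $t$-Frostman with $t \in (1, \dim Y)$, the pair $(\mu, \nu)$ has $\sigma$-thin tubes for every $\sigma < \min\{s+t-1, 1\}$; Lemma \ref{lem:KEY thin tubes} then converts this into the desired projection bound after taking the supremum over $s$ and $t$. The essential new feature is the asymmetry in the two Frostman exponents, which is what permits the target exponent to exceed $s$.

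The base case is now free: since any $\delta$-tube is covered by $\lesssim \delta^{-1}$ balls of radius $\delta$, the $t$-Frostman condition with $t > 1$ gives $\nu(T) \lesssim \delta^{t-1}$ for every $\delta$-tube $T$. Setting $G := \spt \mu \times \spt \nu$, the pair $(\mu, \nu)$ therefore has $(t-1, C, 1)$-thin tubes outright. This already disposes of the $\dim X = 0$ case of Theorem 1.2, and in general reduces the problem to bootstrapping $\sigma$ upward from $t-1$ by a total amount at most $s$.

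The bootstrap engine is an asymmetric analog of the key Lemma \ref{lem:lem2.8}: for $\sigma \in [t-1, \min\{s+t-1, 1\})$ there exists $\eta = \eta(\sigma, s, t) > 0$, uniform on compact subsets of the admissible parameter region, such that $(\sigma, K, 1-\epsilon)$-thin tubes upgrade to $(\sigma + \eta, K', 1-5\epsilon)$-thin tubes. After the contradiction hypothesis, Steps 1--3 of the proof of Lemma \ref{lem:lem2.8} carry over essentially unchanged and produce a collection $\mathcal{T} = \bigcup_{x \in \upbold{X}} \mathcal{T}_x$ whose base points form an $(r, s, r^{-O(\eta)})$-set and whose tubes form $(r, \sigma, r^{-O(\eta)})$-sets. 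The decisive difference is in the tools: in the non-concentrated subcase (Step 4), we replace the $(\sigma, s)$-Furstenberg bound of Theorem \ref{thm:furstenberg}\textemdash which alone would only take $\sigma$ up to $s$\textemdash with an $\epsilon$-improved incidence estimate for $(r, \sigma)$-tube families against $t$-Frostman measures, the ``incidence estimates for balls and tubes in the plane'' advertised in OSW's abstract. In the concentrated subcase (Step 5), the $t$-Frostman bound $\nu(B(y, \rho)) \leq C \rho^t$ replaces the $\rho^s$ used before, sharpening the annulus/density contradiction to the higher threshold $\sigma = s + t - 1$.

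The main obstacle is formulating and applying the correct discretized incidence estimate with the uniformity of parameters needed for iteration. Everything else\textemdash iterating the key lemma finitely many times in the spirit of Corollary \ref{cor:cor2.21} Case 3, and extracting the radial projection bound via Lemma \ref{lem:KEY thin tubes}\textemdash follows the Kaufman template with only bookkeeping changes to accommodate the distinct exponents $s$ and $t$.
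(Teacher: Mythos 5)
Your high-level architecture is correct and matches the paper: start from the observation that a $t$-Frostman measure with $t>1$ automatically gives $(\mu,\nu)$ the $(t-1)$-thin-tubes property (this is exactly Remark~\ref{rem:start thin tubes}), bootstrap via an asymmetric key lemma (this is Lemma~\ref{lem:falconer epsilon improvement}), and invoke a ball--tube incidence estimate rather than the Furstenberg set bound. You even correctly locate the target exponent $\min\{s+t-1,1\}$ and the iterative structure in the spirit of Corollary~\ref{cor:cor2.21}. Up to here your proposal and OSW's proof coincide.

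Where the proposal goes astray is the internal structure of the key lemma. You assert that Steps~1--3 of Lemma~\ref{lem:lem2.8} ``carry over essentially unchanged'' and that the new ingredients enter only at the concentrated/non-concentrated dichotomy of Steps~4--5. Neither claim matches the paper. OSW's proof of Lemma~\ref{lem:falconer epsilon improvement} has no concentrated/non-concentrated case split at all: it constructs a single discrete Furstenberg-type configuration $(P_X, \{\mathcal T_x\}_{x\in P_X}, P_Y)$ and feeds it into the Fu--Ren incidence theorem (Theorem~\ref{thm:fu-ren}), whose hypothesis $|T\cap P_Y|\ge r^{\sigma+\eta}|P_Y|$ already encodes the ``heaviness'' of the bad tubes and whose conclusion $\sigma\ge s+t-1-\zeta$ yields the contradiction in one shot. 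There is no separate annulus/density argument on the concentrated side; that mechanism is specific to the Kaufman lemma, where the Furstenberg bound alone is too weak and the contradiction must be manufactured by hand in the concentrated case.

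The more serious omission is the discretization of $Y$. In the Kaufman proof one never needs a discrete set $P_Y$ --- one works with the measure $\nu$ throughout. In the Falconer proof, to apply the Fu--Ren theorem one must produce an $(\mathrm{r},t,\mathrm{r}^{-O(\eta)})$-set $P_Y\subset Y$ for which cardinality and $\nu$-mass are genuinely comparable, i.e.\ $|P_Y|\lesssim 2^j\mathrm{r}^{-t}$ and $\nu(B(y,\mathrm{r}))\sim 2^{-j}\mathrm{r}^t$ on $P_Y$. This is achieved by decomposing $Y$ into the level sets $Y_j=\{y:2^{-j-1}C\mathrm{r}^t<\nu(B(y,\mathrm{r}))\le 2^{-j}C\mathrm{r}^t\}$, showing only $O(\log(1/\mathrm{r}))$ values of $j$ matter, and pigeonholing over $j$ and over $x$ to select a good $j$ and a large subfamily $P_X'\subset P_X$. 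Only after this is the crucial translation $\nu(T\cap Y_x\cap Y_j)\ge \mathrm{r}^{\sigma+O(\eta)}\Longrightarrow |P_Y\cap 2T|\gtrsim \mathrm{r}^{\sigma+O(\eta)}|P_Y|$ available, which is precisely the hypothesis the Fu--Ren theorem requires. Your proposal gestures at ``an $\epsilon$-improved incidence estimate for tube families against $t$-Frostman measures'' but never explains how a measure-theoretic hypothesis becomes a counting hypothesis; filling that in is the genuinely new work in Section~3, and it does not reduce to bookkeeping changes to Lemma~\ref{lem:lem2.8}.
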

The theorem provides a strong lower bound for the dimension of radial projections. One can view part of the challenge of proving OSW Theorem 1.2 is finding a condition which guarantees under our assumptions that for each $\sigma < \min \{\dim X + \dim Y - 1, 1\}$, there exists $x\in X$ such that $\pi_x\nu$ satisfies a $\sigma$-dimensional Frostman condition. With hindsight, the definition of $\sigma$-thin tubes captures this precisely (see Lemma \ref{lem:KEY thin tubes}).
% \begin{lem}\label{lem:KEY thin tubes}
%     Suppose $\mu$ is a measure with $\spt\mu=X$ and $\nu$ is a measure with $\spt \nu = Y$. If $(\mu,\nu)$ has $(\sigma,K,c)$-thin tubes for some $\sigma,c > 0$, then
%     \[
%     \sup_{x\in X}\dim\pi_x(Y\setminus\{x\}) \ge \sigma.
%     \]
% \end{lem}
% \begin{proof}
%     This is essentially Remark 2.3 in the paper. For this result, the value of ``$K$'' is not important, the only things that matter are $\sigma$ and $c>0$. By Fubini's theorem,
%     \begin{align*}
%         \int \nu(G|_x)\,d\mu(x) = (\mu\times \nu)(G) \ge c > 0.
%     \end{align*}
%     Hence there exists $x\in X$ such that $\nu(G|_x)>0$. Here is where $\sigma > 0$ matters; in the definition of thin tubes, $x\in T$, so we have $\nu(G|_x\cap\{x\}) \le \nu(G|_x\cap T) \le K\cdot r^\sigma$. By taking $r$ arbitrarily small, we see that $\nu(G|_x\cap\{x\})=0$. Therefore, we may pick a compact subset $C\subset G|_x\setminus\{x\}\subset Y\setminus\{x\}$ such that $\nu(C)>0$. By the definition of thin tubes, we find that $\pi_x(\nu|_C)$ satisfies a $\sigma$-Frostman condition with the constant $K$ of $(\sigma,K,c)$-thin tubes. By Lemma \ref{lem:dim lower bound}, $\dim\pi_x(Y\setminus\{x\}) \ge \dim\pi_x(C) \ge \sigma$.
% \end{proof}
\begin{rem}\label{rem:start thin tubes}
    Another consequence of the definition of thin tubes is that in the setup of Theorem 1.2, we immediately have the weaker conclusion that the supremum appearing on the left-hand side is at least $\dim Y-1>0$.
\end{rem}
\begin{proof}
    For arbitrary $1 < t < \dim Y$ and $s<\dim X$, choose a $t$-Frostman measure $\nu$ supported in $Y$, and an $s$-Frostman measure $\mu$ supported in $X$. We show $(\mu,\nu)$ has $(t-1)$-thin tubes. Take $G = X\times Y$. For each $x\in X$, and each $r$-tube $T$ containing $x$, $T$ may be covered by $\sim r^{-1}$-many $r$-balls, so by the $t$-Frostman condition on $\nu$,
    \[
    \nu(T\cap G|_x) = \nu(T) \lesssim r^{-1}\cdot r^{t} = r^{t-1}.
    \]
    By Lemma \ref{lem:KEY thin tubes}, $\sup_{x\in X}\dim\pi_x(Y\setminus\{x\})\ge t-1$. As $t < \dim Y$ was arbitrary, the conclusion follows.
\end{proof}
 The innovation of the authors' argument is realizing the value of $\sigma$ in ``$(\sigma,K,c)$-thin tubes'' can be improved by a small but uniform amount over and over, repeatedly, at the cost of a larger value of $K$ and a smaller (but still positive) value of $c$.

 The thin-tubes version of OSW Theorem 1.2 is the following:
\begin{thm}\label{thm:theorem 1.2 thin tubes}
    Let $s\in[0,2],t\in(1,2]$, $0\le \sigma < \min \{s+t-1,1\}$, $C>0$ and $\epsilon\in(0,1]$. Then there exists $K = K(s,t,\sigma,C,\epsilon)$ such that the following holds. Assume $\mu,\nu\in\mathcal P(B^2)$ satisfy $\mu(B(x,r))\le Cr^s$ and $\nu(B(x,r))\le Cr^t$, or alternatively $I_s(\mu)\le C$ and $I_t(\nu)\le C$. Then $(\mu,\nu)$ has $(\sigma, K, 1-\epsilon)$-thin tubes.

    In particular, whenever $s\in[0,2]$, $t\in(1,2]$, $I_s(\mu)<\infty$ and $I_t(\nu) < \infty$, then $(\mu,\nu)$ has $\sigma$-thin tubes for every $0\le \sigma < \min \{s + t - 1, 1\}$.
\end{thm}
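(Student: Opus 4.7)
The plan is to replicate the thin-tubes bootstrapping scheme used for Theorem \ref{thm:radial-kaufman} in this new regime, starting from the essentially free base case furnished by Remark \ref{rem:start thin tubes}: the $t$-Frostman condition on $\nu$ alone yields $(t-1, K_0, 1)$-thin tubes with $G = X \times Y$. Since $\min\{s+t-1, 1\} > t-1$ (and the range of $\sigma$ is otherwise empty when $s = 0$), the base case already handles the small-$\sigma$ regime, and bootstrapping is needed only for target exponents $\sigma \in [t-1, \min\{s+t-1,1\})$.

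The heart of the matter is an analog of Lemma \ref{lem:lem2.8} tailored to this regime: if $(\mu,\nu)$ has $(\sigma, K, 1-\epsilon)$-thin tubes with $\sigma \in [t-1, \min\{s+t-1,1\})$, then $(\mu,\nu)$ has $(\sigma+\eta, K', 1-5\epsilon)$-thin tubes, where $\eta = \eta(s,t,\sigma) > 0$ is bounded away from $0$ on compact subsets of the admissible parameter region. The proof is by contradiction, and Steps 1--3 of the proof of Lemma \ref{lem:lem2.8} transfer verbatim: from the failure of $(\sigma+\eta)$-thin tubes one extracts a dyadic scale ${\rm r}$, a set $\upbold{X} \subset X$ of $\mu$-mass $\gtrsim {\rm r}^\eta$, and for each $x \in \upbold{X}$ an $({\rm r}, \sigma, {\rm r}^{-5\eta})$-set $\mathcal{T}_x$ of ${\rm r}$-tubes through $x$ on each of which $\nu$ has mass $\gtrsim {\rm r}^{\sigma + 3\eta}$.

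The decisive new ingredient replaces Theorem \ref{thm:furstenberg}: one needs an $\epsilon$-improved \emph{incidence estimate for $\delta$-tubes against a $t$-Frostman measure}, asserting that under the structural hypotheses above with base set $X$ a $(\delta, s, \delta^{-\epsilon})$-set, the total number $|\bigcup_{x} \mathcal{T}_x|_\delta$ of distinct tubes exceeds $\delta^{-(s+t-1)-\epsilon'}$ for some uniform $\epsilon' = \epsilon'(s, t, \sigma) > 0$. This is the natural Falconer-side companion of the Furstenberg-line bound, with exponent $s+t-1$ (rather than $2\sigma$) reflecting the extra dimensional information carried by $\nu$ in each tube. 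Granting such an estimate, one mimics Steps 4 and 5 of the proof of Lemma \ref{lem:lem2.8}: in the \emph{non-concentrated} case the new incidence lower bound contradicts the trivial upper bound $|\mathcal{T}_x| \lesssim {\rm r}^{-\sigma-\eta}$ summed over $\sim {\rm r}^{-s - O(\eta)}$ base points precisely when $\sigma + \eta < s+t-1$; in the \emph{concentrated} case, the annulus argument of Step 5 proceeds unchanged, with the eventual contradiction extracted from the $t$-Frostman condition on $\nu$ at the scale $\xi$.

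With the bootstrap in hand, the theorem follows by iterating it $N \lesssim \eta^{-1}$ times from the base case, choosing the initial mass defect $\bar\epsilon \leq 5^{-N}\epsilon$ to absorb the compounding loss, exactly as in the proof of Corollary \ref{cor:cor2.21}. The main obstacle is thus the uniform $\epsilon$-improved tube-incidence estimate singled out above: without it the iteration either fails to advance past $\sigma = t-1$ or degrades $\eta$ to $0$, in complete analogy with the role played by Theorem \ref{thm:furstenberg} in the Kaufman-type bootstrap.
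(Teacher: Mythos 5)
Your high-level strategy is right: a free base case from Remark~\ref{rem:start thin tubes}, a bootstrap lemma improving the thin-tubes exponent by a uniform $\eta$, and a final iteration. But the proof of the bootstrap lemma (Lemma~\ref{lem:falconer epsilon improvement}) diverges from the paper's in a way that constitutes a real gap.

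The key new tool in OSW's Falconer argument is not a tube-counting lower bound of the form $|\bigcup_x \mathcal{T}_x|_\delta \gtrsim \delta^{-(s+t-1)-\epsilon'}$, and the paper's proof of the key lemma does \emph{not} branch into concentrated/non-concentrated cases as in the Kaufman-side Lemma~\ref{lem:lem2.8}. Instead it uses the Fu--Ren incidence theorem (Theorem~\ref{thm:fu-ren} $=$ OSW Theorem~3.1), which is a \emph{direct implication}: given an $(r,s,r^{-\eta})$-set of base points $P_X$, an $(r,t,r^{-\eta})$-set $P_Y$, and for each $x \in P_X$ an $(r,\sigma,r^{-\eta})$-set of tubes $\mathcal{T}_x$ through $x$ with $|T\cap P_Y|\ge r^{\sigma+\eta}|P_Y|$ for all $T\in\mathcal{T}_x$, one concludes $\sigma\ge s+t-1-\zeta$. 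The contradiction is immediate once the counter-assumption has been discretized to this configuration, with no dichotomy on the geometry of the ``bad'' tubes. Your sketched contradiction --- summing $|\mathcal{T}_x|\lesssim r^{-\sigma-\eta}$ over $\sim r^{-s}$ base points against the claimed tube-count lower bound --- also doesn't produce the required inequality in the right range of $\sigma$ (the threshold it generates is $\sigma+\eta>t-1$, not $\sigma+\eta<s+t-1$), so even granting your hypothesized incidence estimate, the deduction would need reworking.

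There is a second, subtler gap: the Fu--Ren hypothesis demands that \emph{every} tube in $\mathcal{T}_x$ contain a fixed positive fraction $r^{\sigma+\eta}$ of the \emph{same} $(r,t)$-set $P_Y$. Producing such a $P_Y$ is nontrivial and is where the $t$-Frostman hypothesis on $\nu$ actually earns its keep. OSW do this by dyadically decomposing $Y$ into sets $Y_j = \{y : \nu(B(y,r)) \approx 2^{-j}Cr^t\}$ of controlled local density, showing only $O(\log(1/r))$ values of $j$ matter, pigeonholing on $j$ to pick a subset $P_X'\subset P_X$ of base points seeing a common heavy slab $Y_j$, and then taking $P_Y$ to be a maximal $r$-net in $Y_j$ with the crucial \emph{upper bound} $|P_Y|\lesssim 2^j r^{-t}$. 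It is this two-sided control of $|P_Y|$ that converts the $\nu$-mass bound $\nu(Y_x\cap Y_j\cap T)\ge r^{\sigma+O(\eta)}$ into the combinatorial bound $|P_Y\cap 2T|\gtrsim r^{\sigma+O(\eta)}|P_Y|$ demanded by Theorem~\ref{thm:fu-ren}. Your proposal treats $P_Y$ as if it were supplied for free (or as if one could work directly with $\nu$-mass in tubes), which skips this entire construction.
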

Since $\sigma < \min \{\dim X + \dim Y - 1, 1\}$ can be made arbitrarily close to the endpoint, by Lemma \ref{lem:KEY thin tubes}, Theorem \ref{thm:theorem 1.2 thin tubes} immediately gives us the proof of OSW Theorem 1.2. On the other hand, Remark \ref{rem:start thin tubes} says that we start out with $t-1>0$-thin tubes, for any $1<t<\dim Y$.

The Key Lemma of the authors' paper is a precise form of what we've said that the value ``$\sigma$'' of thin tubes can be improved over and over, repeatedly. Let's recall the statement of the Key Lemma for the proof of the Falconer-type radial projection theorem:
\begin{lem}[\cite{orponen2022kaufman} Lemma 3.21, The ``key lemma"] \label{lem:falconer epsilon improvement}
Let $s\in[0,2]$, $t\in(1,2]$, and $0\le \sigma < \min \{s + t - 1, 1\}$. Let $\epsilon\in(0,\frac1{10})$ and $C, K> 0$. Let $\mu,\nu\in\mathcal P(B^2)$ such that $\mu(B(x,r)) \le Cr^s$ and $\nu(B(y,r)) \le Cr^t$ for all $x,y\in\R^2$ and $r > 0$. If $(\mu,\nu)$ has $(\sigma, K, 1-\epsilon)$-thin tubes, there exist $\eta=\eta(s,t,\sigma) > 0$ and $K' = K'(s,t,\sigma,\epsilon,C,K)>0$ such that $(\mu,\nu)$ has $(\sigma+\eta,K',1-4\epsilon)$-thin tubes. Moreover $\eta(s,t,\sigma)$ is bounded away from zero on any compact subset of 
\[
\Omega = \{(s,t,\sigma)\in[0,2]\times(1,2]\times[0,1):\sigma < \min \{s+t-1,1\}\}.
\]
\end{lem}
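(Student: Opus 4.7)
The plan is to adapt the bootstrapping-by-contradiction strategy of the Kaufman-type key lemma (Lemma \ref{lem:lem2.8}), with the crucial new input being the $t$-Frostman condition on $\nu$ with $t > 1$. Suppose for contradiction that $(\mu,\nu)$ fails to have $(\sigma+\eta, K', 1-4\epsilon)$-thin tubes, with $\eta = \eta(s,t,\sigma) > 0$ chosen sufficiently small and $K'$ chosen large in terms of $s, t, \sigma, \epsilon, C, K$. From the hypothesis of $(\sigma, K, 1-\epsilon)$-thin tubes, extract the good set $G \subset X \times Y$ exactly as in Step 1 of the proof of Lemma \ref{lem:lem2.8}, then pigeonhole a dyadic scale ${\rm r}$ and a ``momentum set'' $\mathbf{X} \subset \spt\mu$ of non-trivial $\mu$-mass, together with a refined family $\mathcal{T}_x$ of $2{\rm r}$-tubes through each $x \in \mathbf{X}$ violating the would-be $(\sigma+\eta)$-thin tube condition on $\nu$.

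Next, following the structural analysis of Steps 2a--3 of Lemma \ref{lem:lem2.8}, verify that $\mathcal{T}_x$ is an $({\rm r}, \sigma, {\rm r}^{-O(\eta)})$-set of tubes through $x$ and that $\mathbf{X}$ can be thinned to an $({\rm r}, s, {\rm r}^{-O(\eta)})$-set via the discrete Frostman lemma against the $s$-Frostman measure $\mu$. Split into cases according to whether the tubes in $\mathcal{T}_x$ are concentrated or not. The concentrated case transfers essentially verbatim from Step 5 of the Kaufman proof: the $s$-Frostman condition on $\mu$ forces a contradiction via the annulus argument at an intermediate scale $\xi \in [{\rm r}, {\rm r}^\kappa]$, independent of the value of $t$. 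The non-concentrated case is where the argument must substantively depart from the Kaufman setting. In place of the $\epsilon$-improved Furstenberg bound (Theorem \ref{thm:furstenberg}) --- whose exponent $2\sigma$ would only push $\sigma$ toward $\min\{s, 1\}$ --- invoke an $\epsilon$-improved incidence estimate for $\delta$-balls and $\delta$-tubes that genuinely exploits the $t$-Frostman property of $\nu$ with $t > 1$, and whose sharp exponent matches the target $\min\{s+t-1, 1\}$.

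The main obstacle is precisely the non-concentrated step: identifying and applying the correct $\epsilon$-improved incidence bound (or Furstenberg-type estimate with the appropriate higher-dimensional input) that genuinely uses $t > 1$, and verifying that its $\epsilon$-slack is uniform on compact subsets of $\Omega$ --- this is what ultimately forces $\eta(s,t,\sigma)$ to be bounded away from zero on compact subsets of $\Omega$. Once such an estimate is in hand and $\eta$ is chosen as a fixed fraction of its $\epsilon$-slack, the remaining bookkeeping --- choosing threshold scales analogous to $r_0, r_1, r_2$ in Table \ref{tab:parameters}, tracking the degradation from $(1-\epsilon)$ to $(1-4\epsilon)$ through the successive pigeonholes, and verifying the concentrated/non-concentrated dichotomy exhausts all tubes --- proceeds in close analogy with the Kaufman proof.
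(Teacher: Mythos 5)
Your proposal imports the concentrated/non-concentrated dichotomy from the Kaufman key lemma, but this is not what the paper does for the Falconer key lemma, and it is not merely a cosmetic difference — it would lead to genuine trouble. Two concrete issues.

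First, the concentrated case of the Kaufman argument (Step 5 of Lemma \ref{lem:lem2.8}) hinges on the \emph{symmetric} thin-tube hypothesis: in Step 1 of the Kaufman proof, one extracts sets $G'\subset X\times Y$ and $G''\subset Y\times X$ from the assumption that \emph{both} $(\mu,\nu)$ and $(\nu,\mu)$ have $(\sigma,K,1-\epsilon)$-thin tubes, and it is the thin-tube condition on $(\nu,\mu)$ (applied to tubes through a well-chosen $y$, paired against the $s$-Frostman bound on $\mu$) that produces the annulus contradiction. In Lemma \ref{lem:falconer epsilon improvement} the hypothesis is one-sided — only $(\mu,\nu)$ is assumed to have thin tubes — and $\mu,\nu$ do not play symmetric roles ($s$-Frostman vs.\ $t$-Frostman with $t>1$). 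So "transfers essentially verbatim" is not available; you would have to rebuild that step from scratch, and it is not clear it works.

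Second, and more fundamentally, the paper avoids the dichotomy entirely by reducing directly to the Fu--Ren incidence theorem (Theorem \ref{thm:fu-ren}), which handles the entire tube family at once. You correctly sense that some ball-tube incidence estimate exploiting $t>1$ is needed, but you leave unaddressed the actual work in that step: Theorem \ref{thm:fu-ren} requires, besides the $(\ar,s,\ar^{-O(\eta)})$-set $P_X$ and the $(\ar,\sigma,\ar^{-O(\eta)})$-sets $\mathcal{T}_x$, a discrete $(\ar,t,\ar^{-O(\eta)})$-set $P_Y\subset B^2$ for which one can verify the lower bound $|T\cap P_Y|\ge \ar^{\sigma+O(\eta)}|P_Y|$ for all tubes in play. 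The paper achieves this by decomposing $Y$ into density shells $Y_j=\{y:2^{-j-1}C\ar^t<\nu(B(y,\ar))\le 2^{-j}C\ar^t\}$, showing only $O(\log(1/\ar))$ values of $j$ matter, pigeonholing a single $j$ that carries most of the mass for most base points, and then taking $P_Y$ to be a maximal $\ar$-separated subset of $Y_j$ so that $|P_Y|\lesssim 2^j\ar^{-t}$, which is exactly the cardinality control needed to convert the measure bound $\nu(T\cap Y_x\cap Y_j)\ge \ar^{\sigma+O(\eta)}$ into the counting bound the theorem demands. This passage from measures to a counting lower bound via the density shells is the crux of the proof, and it is what genuinely uses $t>1$; it is absent from your plan. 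Without it, stating "invoke the correct $\epsilon$-improved incidence estimate" is aspirational rather than a proof.

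In short: drop the concentrated/non-concentrated split, appeal directly to Theorem \ref{thm:fu-ren}, and supply the $Y_j$-shell construction of $P_Y$ together with the verification of its $(\ar,t)$-set property and the incidence lower bound. The uniformity of $\eta$ on compact subsets of $\Omega$ then comes for free from the corresponding uniformity statement built into Theorem \ref{thm:fu-ren}.
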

Within the statement of the Key Lemma, the most important features are:
\begin{itemize}
    \item For fixed $s\in[0,2]$ and $t\in(1,2]$, the value $\eta(s,t,\sigma)$ can be taken uniformly positive for all $\sigma\in[t-1,\min\{s+t-1,1\}]$.
    \item We improve $\sigma$ to $\sigma + \eta$.
    \item It only costs us $3\epsilon$ in the third parameter of thin tubes.
\end{itemize}
With the Key Lemma in hand, the proof of the main technical theorem regarding thin tubes for the Falconer-type estimate is a very short and elegant proof by bootstrapping which we don't need to repeat here. Instead, we will focus on discovering what goes into the proof of Lemma \ref{lem:falconer epsilon improvement}.

\subsection{A zoomed-out look at the key lemma for the Falconer-type Theorem 1.2}
In this subsection, we will describe in less precise terms what goes into the Key Lemma for the Falconer-type Theorem 1.2. The proof of the Key Lemma is by contradiction, so we assume $(\mu,\nu)$ has $\sigma$-thin tubes, but not $(\sigma+\eta)$-thin tubes. 

The first big takeaway of this assumption is that this setup essentially immediately lets us find a dual $(s,\sigma)$-Furstenberg set of $r$-tubes $\mathcal T$ with base points in a large fraction of $X$, which we won't distinguish from $X$ at this point. Pretending that for some small scale $r>0$, $X$ is an $r$-separated $(r,s)$-set of points and $Y$ is an $r$-separated $(r,t)$-set of points, for each $x\in X$, we can find a $(r,\sigma)$-set $\mathcal T_x$ of $1\times r$ ``\textit{bad}'' tubes $\mathcal T_x$ containing $x$, such that
\begin{equation}\label{eq:disc not sigma+eta thin}
|P_Y\cap T| \ge r^{\sigma + \eta}|P_Y|,\qquad T\in\mathcal T_x.
\end{equation}
Inequality \eqref{eq:disc not sigma+eta thin} represents our assumption that $(\mu,\nu)$ does \emph{not} have $(\sigma+\eta)$-thin tubes, expressed in terms of the discretized sets $P_X$ and $P_Y$ and the bad tubes $\mathcal T_x$ which fail to satisfy the assumption of $(\sigma+\eta)$-thin tubes. 

This will lead to a contradiction by the following immediate consequence of the Fu--Ren incidence theorem, recorded in Theorem 3.1 of \cite{orponen2022kaufman} (with notation adapted for our use here):

\newpage

\begin{thm}[\cite{orponen2022kaufman} Theorem 3.1]\label{thm:fu-ren}
    For every $t\in (1,2]$, $\sigma\in[0,1)$, and $\zeta > 0$, there exist $\eta=\eta(t,\sigma,\zeta)>0$ and $r_0 = r_0(t,\sigma,\zeta)>0$ such that the following holds for all $r\in(0,r_0]$.

    Let $s\in[0,2]$. Let $P_Y\subset B^2$ be an $r$-separated $(r,t,r^{-\eta})$-set, and let $P_X\subset B^2$ be an $r$-separated $(r,s,r^{-\eta})$-set. Assume that for every $x\in P_X$, there exists a $(r,\sigma,r^{-\eta})$-set of tubes $\mathcal T_x$ with the properties $x\in T$ for all $T\in\mathcal T_x$, and
    \[
    |T\cap P_Y| \ge r^{\sigma+\eta}|P_Y|,\qquad T\in\mathcal T_x.
    \]
    Then $\sigma \ge s + t - 1 - \zeta$.
\end{thm}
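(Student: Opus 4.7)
The plan is to argue by contradiction, reducing the statement to the $\epsilon$-improved Fu--Ren incidence bound for $r$-tubes and $r$-balls in the plane. Morally, the hypothesis that every $T \in \mathcal{T}_x$ satisfies $|T \cap P_Y| \geq r^{\sigma+\eta}|P_Y|$ forces many more point--tube incidences than a generic Furstenberg-type configuration permits, and the $\epsilon$-improved incidence bound for tubes and balls translates this excess into a numerical improvement on $\sigma$.

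Assume for contradiction that $\sigma < s + t - 1 - \zeta$. The first step is a standard dyadic pigeonholing in the quantity $|T \cap P_Y| \in [r^{\sigma+\eta}|P_Y|,|P_Y|]$, isolating a single common scale $\lambda \geq r^{\sigma+\eta}|P_Y|$ and producing a large refined subset $P_X' \subseteq P_X$ together with refined tube families $\mathcal{T}'_x \subseteq \mathcal{T}_x$. After absorbing the resulting logarithmic losses into the constants, $P_X'$ is still a $(r,s,r^{-O(\eta)})$-set, each $\mathcal{T}'_x$ is still a $(r,\sigma,r^{-O(\eta)})$-set, and every $T \in \mathcal{T}'_x$ satisfies $|T \cap P_Y| \sim \lambda$.

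Next, form the global collection $\mathcal{T} := \bigcup_{x \in P_X'} \mathcal{T}'_x$. A double-counting argument using the elementary observation that no $r$-tube contains more than $r^{\max(s-1,0) - O(\eta)} |P_X|$ points of a $(r,s,r^{-\eta})$-set $P_X$ (so a single tube cannot appear in too many of the families $\mathcal{T}'_x$) gives
\[
|\mathcal{T}| \gtrsim r^{-s-\sigma+O(\eta)},
\]
and hence the total incidence count satisfies
\[
I(\mathcal{T}, P_Y) \;=\; \sum_{T \in \mathcal{T}} |T \cap P_Y| \;\gtrsim\; |\mathcal{T}| \cdot \lambda \;\gtrsim\; r^{-s-t+\eta+O(\eta)}.
\]

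Finally, I would apply the Fu--Ren incidence theorem to the tube family $\mathcal{T}$ and the $(r,t,r^{-\eta})$-set $P_Y$. Once the $(r,\cdot,r^{-O(\eta)})$-set exponents are matched, Fu--Ren produces an upper bound on $I(\mathcal{T}, P_Y)$ with an $r^{-\zeta/2}$ slack that is inconsistent with the lower bound above unless $\sigma \geq s + t - 1 - \zeta$; choosing $\eta = \eta(t,\sigma,\zeta)$ small enough that all of the $r^{O(\eta)}$ losses are eclipsed by $r^{-\zeta/2}$ yields the desired contradiction. The hard part has already been done inside the Fu--Ren theorem, which we use as a black box; the rest of the argument is a careful Furstenberg-style pigeonholing of the type already seen in \S\ref{ss:thm1.1-key-lemma}. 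The uniformity of $\eta$ on compact subsets of the parameter domain is inherited from the analogous uniformity of the $\epsilon$-improvement in Fu--Ren.
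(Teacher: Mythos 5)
The study guide does not actually prove this statement: it is quoted as an ``immediate consequence of the Fu--Ren incidence theorem'' and treated as a black box, so there is no step-by-step paper proof to compare against. That said, your blind attempt has a concrete gap in the double-counting step.

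You claim that because a single $r$-tube contains at most $r^{\max(s-1,0)-O(\eta)}|P_X|$ points of the $(r,s,r^{-\eta})$-set $P_X$, the global tube family satisfies $|\mathcal{T}|\gtrsim r^{-s-\sigma+O(\eta)}$. The overlap bound you quote is correct, but the conclusion does not follow. Double-counting pairs $(x,T)$ with $x\in P_X'$, $T\in\mathcal T'_x$ gives
\begin{equation*}
|\mathcal{T}| \;\gtrsim\; \frac{|P_X'|\cdot\min_{x}|\mathcal{T}'_x|}{\max_T|T\cap P_X'|} \;\gtrsim\; \frac{|P_X|\cdot r^{-\sigma+O(\eta)}}{r^{\max(s-1,0)-O(\eta)}|P_X|} \;=\; r^{-\sigma-\max(s-1,0)+O(\eta)},
\end{equation*}
which is $r^{-\sigma+O(\eta)}$ when $s\le1$ and $r^{1-s-\sigma+O(\eta)}$ when $s>1$ --- strictly weaker than $r^{-s-\sigma+O(\eta)}$ for every $s>0$. (Note that $|\mathcal{T}'_x|\lesssim r^{-\sigma-\eta}$ anyway, from summing $|T\cap P_Y|\ge r^{\sigma+\eta}|P_Y|$ over essentially disjoint tubes through $x$, so a lower bound of $r^{-s-\sigma}$ on $|\mathcal T|$ cannot come out of this counting.) Feeding the weaker $|\mathcal T|$ into $I(\mathcal T,P_Y)\gtrsim|\mathcal T|\cdot\lambda$ gives only $I\gtrsim|P_Y|r^{-\max(s-1,0)+O(\eta)}$; the $r^{-\sigma}$ and $r^{+\sigma}$ contributions cancel, so the $\sigma$-dependence you need for the contradiction has disappeared from the lower bound.

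Two smaller issues: the dyadic pigeonholing in $\lambda=|T\cap P_Y|$ is unnecessary because the hypothesis already supplies a uniform lower bound and no upper bound on $|T\cap P_Y|$ is used anywhere; and you implicitly use $|P_X|\sim r^{-s}$, $|P_Y|\sim r^{-t}$, but the $(r,s,r^{-\eta})$- and $(r,t,r^{-\eta})$-set hypotheses only give one-sided bounds $|P_X|\gtrsim r^{-s+\eta}$, $|P_Y|\gtrsim r^{-t+\eta}$. The intended route in OSW is closer to a direct notation-match with Fu--Ren's theorem (which, in the relevant formulation, already bounds $|T\cap\mathcal P|$ for most tubes of a $(\delta,\sigma,\delta^{-\eta})$-set against a $(\delta,t,\delta^{-\eta})$-set), so the cardinality gymnastics here are both more than needed and, as written, not tight enough to close the argument.
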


The contradiction comes by essentially choosing $\eta$ to be the same $\eta$ from Theorem \ref{thm:fu-ren}. Since $\sigma < s + t - 1$, in particular, for some small $\zeta > 0$, we also have $\sigma < s + t - 1 - \zeta$. But now we see that by assuming we do not have $(\sigma+\eta)$-thin tubes, as long as we are able to work down from $\mu$ and $\nu$ to sets $P_X$ and $P_Y$ as above, and find the collections $\mathcal T_x$ of tubes for $x\in P_X$, Theorem 3.1 implies $\sigma \ge s + t - 1 - \zeta$, and this is a contradiction. The conclusion is that $(\mu,\nu)$ \emph{do} have $(\sigma+\eta)$-thin tubes, where $\eta$ is essentially the same number that appears in Theorem 3.1.

\subsection{A close look at the key lemma for the Falconer-type Theorem 1.2}

We will follow along with the authors as they prove Lemma \ref{lem:falconer epsilon improvement}. The proof starts by contradiction: we assume that we start with a pair $(\mu,\nu)$ that has $(\sigma,K,1-\epsilon)$-thin tubes, but $(\mu,\nu)$ does \emph{not} have $(\sigma + \eta, K',1-4\epsilon)$-thin tubes, for $\eta$ and $K'$ that need to be determined at some point during the proof. Here we can highlight another facet of the definition of thin tubes which is easy to overlook. If $(\mu,\nu)$ does not have $(\sigma+\eta,K',1-4\epsilon)$-thin tubes, then in particular, for \emph{every} $G\subset X\times Y$ with $(\mu\times \nu)(G)\ge 1-4\epsilon$, we can find an $x\in X$ and an $r$-tube $T$ containing $x$ such that
\[
\nu(T\cap G|_x) \ge K'\cdot r^{\sigma + \eta}.
\]
The first thing that the authors do is to choose a suitable $G$.

Because $(\mu,\nu)$ has $\sigma$-thin tubes, we start with the $G$ given to us by the definition. For each $x\in X$, 
\[
\nu(T\cap G|_x) \le K\cdot r^\sigma\ \quad \text{for all $r > 0$ and all $r$-tubes $T$ containing $x$},
\]
and moreover, $(\mu\times \nu)(G) \ge 1-\epsilon$.

We introduce the \emph{bad} dyadic $r$-tubes containing $x$; $\mathcal T_{x,r}''$ is the collection of $r$-tubes $T$ containing $x$ such that $\nu(T\cap G|_x) \ge \frac{K'}{2}\cdot r^{\sigma+\eta}$, for an appropriate $K'$ we will have to choose later.

As in the proof of the Key Lemma for the Kaufman-type radial projection theorem, we need to pare down the potentially infinite collection $\mathcal T_{x,r}''$. For each $x$, we choose a maximal subset of $\mathcal M\subset \mathcal T_{x,r}''$ with $r$-separated angles, and then let $\mathcal T_{x,r}' = \{10T:T\in\mathcal M\}$ be the collection of $10r$-tubes we obtain by dilating each member of $\mathcal M$ by a factor of $10$. By maximality and inflating, it ensures that $(\cup\mathcal T_{x,r}'')\cap B^2 \subset \cup\mathcal T_{x,r}'$.

We also verify the cardinality estimate $|\mathcal T_{x,r}'| \lesssim (K')^{-1}\cdot r^{-\sigma-\eta}$. Let $C$ denote the unit circle centered on $x$. Because members of $\mathcal M$ have $r$-separated directions, when we intersect members of $\mathcal M$ with $C$, the resulting family of caps on $C$ is at most finitely overlapping, so we have
\begin{align*}
    1 = \pi_x\nu(C) &\ge \pi_x\nu(C\cap (\cup\mathcal M))\\
    &\sim \sum_{T\in\mathcal M}\pi_x\nu(C\cap T) \\
    &\sim \sum_{T\in\mathcal M}\nu(T) \gtrsim (K'\cdot  r^{\sigma+\eta})|\mathcal T_{x,r}'|.
\end{align*}
In the last line, we used that $|\mathcal M| = |\mathcal T_{x,r}'|$, by definition. The next point is to ensure that our set $G$ we pick satisfies the property that whenever $x\in X$, we have $(x,y)\in G$ if and only if $y$ is covered by at least one bad tube centered on $x$.

We make some general observations at this point to motivate the next step of the argument. If $H\subset X\times Y$ is \emph{any} set that has very small $\mu\times \nu$-measure, say $(\mu\times\nu)(H) < 4\epsilon$, then $(\mu\times\nu)(H^c)\ge 1-4\epsilon$. Because $(\mu,\nu)$ does \emph{not} have $(\sigma+\eta,K',1-4\epsilon)$-thin tubes by assumption, there exists $x\in H^c$ and an $r$-tube $T$ containing $x$ such that
\[
\nu(T\cap (H^c)|_x)\ge \frac{K'}{2}\cdot r^{\sigma+\eta}>0.
\]
In particular, $\nu(T\cap (H^c)|_x) > 0$, so there must be a point $y\in T\cap (H^c)|_x$. In other words, if $H$ is too small by $\mu\times\nu$-measure, there must be an $x\in X$ such that it is impossible to cover all of $H|_x$ by tubes $T$ containing $x$.

Returning to the authors' argument, for fixed dyadic $r$, we let $\bar H_r = \{(x,y)\in X\times Y:y\in\cup\mathcal T_{x,r}'\}$, and let $\bar H$ be the grand union $\bigcup_{r}\bar H_r$. By the general remarks we have just made, we know that $(\mu\times\nu)(\bar H)\ge2\epsilon$, since for each point $(x,y)\in\bar H$, we \emph{do} have $y\in  T\cap (\bar H|_x)$ for some $r$-tube $T$, by the definition of $\bar H$. We even have room to intersect $\bar H$ with our original $G$, so we have $H := G\cap \bar H$, and $H_r = G\cap \bar H_r$, with $(\mu\times\nu)(H)\ge \epsilon$ by inclusion-exclusion. 

By the union bound,
\begin{equation}\label{eq:union-bound}
(\mu\times \nu)(H) \le \sum_{r}(\mu\times\nu)(H_r)
\end{equation}
However, $K'$ is a free parameter, so for large values of $r$ the set $H_r = \varnothing$. This is because for any fixed $r_0$, we can choose $K'=K'(r_0,\sigma,\eta)$ to be so large that $\frac{K'}{2}\cdot r_0^{\sigma+\eta}>1$ holds, and hence the defining relation for $\mathcal T_{x,r_0}''$,
\[
\nu(T\cap G|_x) \ge \frac{K'}{2}\cdot r_0^{\sigma+\eta},
\]
is impossible to satisfy. Thus, fix $r_0$ sufficiently small so $\sum_{r\le r_0}r^\eta < \epsilon/2$, and then choose $K'$ so large that $(K'/2)\cdot r_0^{\sigma+\eta} > 1$, so we can update inequality \eqref{eq:union-bound} to say
\begin{equation}\label{eq:updated-union-bd}
    \epsilon\le (\mu\times\nu)(H)\le\sum_{r\le r_0}(\mu\times\nu)(H_r).
\end{equation}
By the first inequality of \eqref{eq:updated-union-bd} and the pigeonhole principle, there must be a $\ar\le r_0$ such that $(\mu\times\nu)(H_\ar)\ge 2\ar^\eta$. The set $H_\ar$ has the property that all of its $x$-sections are covered by bad $\ar$-tubes, and this ``bad scale" $\ar$ is now fixed.

We set $\mathbf X = \{x\in X:\nu(H_\ar|_x)\ge \ar^\eta\}$, which is almost the final set of ``base points'' for a discrete Furstenberg set; the only thing left to do is appeal to the version of Frostman's lemma which allows us to find an $(\ar,s,\ar^{-O(\eta)})$-set $P_X\subset\mathbf X$. To finish building the Furstenberg setup to invoke Theorem \ref{thm:fu-ren}, we need to choose the tube families $\mathcal T_x$, and prove that $\mathcal T_x$ is an $(\ar,\sigma,\ar^{-O(\eta)})$-set of tubes. What the authors do first is to pick the heavy tubes from the collection $\mathcal T_{x,\ar}'$:
\[
\mathcal T_x = \{T\in\mathcal T_{x,\ar}':\nu(T\cap H_\ar|_x) \ge \ar^{\sigma+3\eta}\}.
\]
Since $H_\ar|_x\subset\cup\mathcal T_{x,\ar}'$, we can write
\[
H_\ar|_x = \bigcup_{T\in\mathcal T_x}T\cap H_\ar|_x + \bigcup_{T\in\mathcal T_{x,\ar}'\setminus \mathcal T_x}T\cap H_\ar|_x.
\]
There are no more than $(K')^{-1}\cdot \ar^{-\sigma-\eta}$ tubes in $\mathcal T_{x,\ar}'\setminus\mathcal T_x$, each of which is a ``light'' tube with $\nu(T\cap H_\ar|_x)< r^{\sigma+3\eta}$, so with $Y_x := (H_\ar|_x)\cap(\cup\mathcal T_x)$,
\[
\ar^\eta \le \nu(H_\ar|_x) \le \nu(Y_x) + (K')^{-1}\ar^{2\eta}.
\]
The second term can be absorbed into the left-hand side, so we know that $\nu(Y_x) \ge \ar^{2\eta}$. In other words, by just keeping the heavy tubes and replacing $H_\ar|_x$ by $Y_x$, we don't throw away much $\nu$-mass. Now we use our assumption that $(\mu,\nu)$ has $\sigma$-thin tubes: for each $T\in\mathcal T_x$, and each $\rho\ge \ar$, letting $T^{(\rho)}$ denote the $\rho$-neighborhood of $T$, we have
\[
\nu(T^{(\rho)}\cap Y_x) \le \nu(T^{(\rho)}\cap G|_x) \lesssim \rho^\sigma.
\]
In particular, because the tubes in $\mathcal T_x$ are heavy and have $\ar$-separated directions, we have
\begin{align*}
\ar^{\sigma+3\eta}|\{T'\in\mathcal T_x:T'\subset T^{(\rho)}\}|& \le \sum_{T'\in\mathcal T_x:T'\subset T^{(\rho)}}\nu(T'\cap Y_x) \\[0.1cm]
&\le \nu(T^{(\rho)}\cap Y_x) \lesssim \rho^\sigma.
\end{align*}
Therefore, $|\{T'\in\mathcal T_x:T'\subset T^{(\rho)}\}|\lesssim \ar^{-3\eta}(\frac{\rho}{\ar})^\sigma$. Lastly, $\ar^{2\eta}\le \nu(Y_x) \le \sum_{T\in\mathcal T_x}\nu(T\cap Y_x) \le \ar^\sigma|\mathcal T_x|$, which implies $|\mathcal T_x| \ge \ar^{-\sigma+2\eta}$. Since all the tubes $T\in\mathcal T_x$ pass through $x$, cardinality is comparable to $\ar$-covering number, so if $\ell$ is a line and $B(\ell,\rho)$ denotes the $\rho$-neighborhood of $\ell$ in $\mathcal A(2,1)$, we have
\[
|\mathcal T_x\cap B(\ell,\rho)|_\ar \lesssim \ar^{-O(\eta)}\rho^\sigma\cdot|\mathcal T_x|_\ar.
\]
This verifies that $\mathcal T_x$ is an $(\ar,\sigma,\ar^{-O(\eta)})$-set.

\begin{rem}
We note that to produce the set $\mathbf X$ and the tube families $\mathcal T_x$ for $x\in\mathbf X$ we used both assumptions that $(\mu,\nu)$ has $\sigma$-thin tubes, and $(\mu,\nu)$ does not have $(\sigma+\eta)$-thin tubes, supporting what we said in our zoomed out look at the Key Lemma regarding the connection between this counter-assumption and a dual Furstenberg setup.
\end{rem}

To finish the setup for the invocation of the Fu--Ren incidence theorem in the form of Theorem \ref{thm:fu-ren}, we need to choose the set $P_Y$. Instead of taking an arbitrary $(\ar,t,\ar^{-O(\eta)})$-set in $Y$, which exists by Frostman's lemma, we need to find a set for which we have a good upper bound of its cardinality. We say a few more words here about the construction of the set $P_Y$. For each $j\ge 0$, we let
\[
Y_j  = \{y\in Y: 2^{-j-1}\cdot C\ar^t < \nu(B(y,\ar)) \le 2^{-j}\cdot C\ar^t\}.
\]
First we explain what the authors mean by there only being $O(\log(1/\ar))$ many choices of ``$j$'' which need to be considered. For each $j$, let $\mathcal B_j = \{B(y,\frac{\ar}{5}):y\in Y_j\}$ be a covering of $Y_j$ by centered balls of radius $\ar/5$. By a Vitali covering argument, we can extract a disjoint subcollection $\mathcal B_j' \subset\mathcal B_j$ such that $Y_j\subset \bigcup \mathcal B'_j$. By definition of $Y_j$,
\[
\nu(Y_j) \le \sum_{\mathcal B'_j} \nu(B(y,\ar)) \le 2^{-j}\cdot C\ar^t \cdot \#\mathcal B_j'.
\]
As the balls in $\mathcal B_j'$ are disjoint and contained in $B(0,C)$ by the compact support of $\nu$, 
 $\#\mathcal B_j' \lesssim \ar^{-2}$. Therefore, if $j > A\log(1/\ar)$ for a large enough constant $A$, we have $\nu(Y_j) \lesssim 2^{-j}\ar^{t-2} \lesssim 2^{-j/2}\ar^{5\eta}$. Consequently, for each $x$, we can write
\begin{align}
\ar^{2\eta} \le \nu(Y_x) &\lesssim \sum_{j\le A\log(1/\ar)}\nu(Y_x\cap Y_j) + \sum_{j>A\log(1/\ar)}2^{-j/2}\ar^{5\eta} \\
&\le \sum_{j\le A\log(1/\ar)}\nu(Y_x\cap Y_j) + \ar^{4\eta},
\end{align}
provided we chose $A$ sufficiently large. Absorbing $\ar^{4\eta}$ into the left-hand side, we see for each $x$, there exists $j(x)\le A\log(1/\ar)$ such that $\nu(Y_x\cap Y_j)\ge A^{-1}\log(1/\ar)^{-1}(\ar^{2\eta}-\ar^{4\eta})\ge \ar^{3\eta}$, provided we choose $r_0$ small enough depending on $A$.

Next, for each $0\le j\le A\log(1/\ar)$, we let $P_X(j) = \{x\in P_X:j(x) = j\}$. As there are only $O(\log(1/\ar))$-many values of $j$, by the pigeonhole principle, there is a particular $j$ so that with $P_X':= P_X(j)$, we have $|P_X'| \ge \ar^\eta|P_X|$. We let $P_Y$ be a maximal $\ar$-separated subset of the set $Y_j$, which by its construction has cardinality upper bounded by $C^{-1}\cdot 2^j\ar^{-t}$. As the authors verify, $P_Y$ is an $(\ar,t,\ar^{-O(\eta)})$-set.

Since $\nu(Y_x\cap Y_j) \ge \ar^{3\eta}$, by considering which tubes in $\mathcal T_x$ are heavy and light for the set $Y_x\cap Y_j$, we find a subset $\mathcal T_x'\subset\mathcal T_x$ such that $\nu(Y_x\cap Y_j\cap T) \ge \ar^{\sigma+O(\eta)}$ for all $T\in\mathcal T_x'$, and such that $|\mathcal T_x'|\ge \ar^{O(\eta)}|\mathcal T_x|$.  For each $T\in\mathcal T_x'$, by what's been shown, we have
\[
\ar^{\sigma+O(\eta)}\le \nu(Y_x\cap Y_j\cap T) \lesssim (2^{-j}\cdot C\ar^t)\cdot|P_Y\cap 2T|,
\]
the last upper bound following by covering $Y_x\cap Y_j\cap T$ by $\ar$-balls and using the definition of $Y_j$. Since we have the good upper bound on $|P_Y|\lesssim 2^j\cdot C\ar^{-t}$, this string of inequalities translates into the lower bound:
\[
|P_Y\cap 2T| \gtrsim \ar^{\sigma+O(\eta)}|P_Y|.
\]
With this, we set $\mathcal T_x'' = \{2T:T\in\mathcal T_x'\}$ to finish the construction of the tube families we need to invoke Theorem \ref{thm:fu-ren}. This finishes the proof of Lemma \ref{lem:falconer epsilon improvement}.

% We emphasize here that this is essentially a Furstenberg-type setup at the small scale $r$. For each point $x\in X$, which after some more pigeonholing can be replaced with a finite $(r,s)$-set, we know $\pi_x\nu$ satisfies an $(\sigma+\eta)$-Frostman condition at the scale $r$, which is very close to the discretized Furstenberg setup.

% By our counter-assumption that $(\mu,\nu)$ do not have $(\sigma+\eta)$-thin tubes, for each $r>0$ and each $x\in X$, we let $\mathcal T_{x,r}''$ be the collection of bad $r$-tubes containing $x$ which do not satisfy the condition of $(\sigma+\eta)$-thin tubes. For each fixed $r>0$, this collection can be infinite, so the authors pare down $\mathcal T_{x,r}''$ to the finite set $\mathcal T_{x,r}'$ of slightly thicker tubes, but we still think of $\mathcal T_{x,r}'$ as being ``the'' bad $r$-tubes passing through $x$.  For each pinned $x\in X$, the 

% The main input to the proof of Lemma \ref{lem:falconer epsilon improvement} is the following immediate corollary of the Fu--Ren incidence estimate for fractal configurations of balls and tubes \cite{}.

\section{Further results and current work}\label{s:Section 4}

\subsection{Strengthening Theorem 4.2.ii}\label{ss:Section 4.1}

Section 4 of OSW generalizes Theorem 1.1 to higher dimensions in Theorem 4.2.ii:

\begin{thm}[\cite{orponen2022kaufman} Theorem 4.2.ii] \label{thm:thm4.2.ii}
    If $X\subset \R^n$ be Borel with $\dim X \in (k-1,k]$ (for some $k \in \{1,\dots, n-1\}$) such that $X$ is not contained in any $k$-plane, the following holds. If $\dim Y > k - 1/k - \eta$ for a sufficiently small constant $\eta = \eta(n,k,\dim X)>0$, then 
\[
\sup_{x\in X} \dim \pi_x(Y\setminus \{x\}) \geq \min \{\dim X, \dim Y\}.
\]
For $k=1$, we require no lower bound from $\dim Y$.
\end{thm}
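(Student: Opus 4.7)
The plan is to adapt the five-step bootstrapping scheme of Theorem \ref{thm:radial-kaufman} to the higher-dimensional ambient space $\R^n$. I would first rephrase Theorem \ref{thm:thm4.2.ii} as a statement about thin tubes in $\R^n$: for any $s < \min\{\dim X, \dim Y\}$ and suitable Frostman measures $\mu \in \mathcal{P}(X)$ and $\nu \in \mathcal{P}(Y)$ with positively separated supports, the pair $(\mu,\nu)$ should have $\sigma$-thin tubes for every $\sigma < s$, whereupon Lemma \ref{lem:KEY thin tubes} delivers the desired radial projection bound upon taking a supremum over $s$. Note that Definition \ref{defn:thin-tubes-2} is already phrased in arbitrary dimension, so no reformulation is needed; tubes in $\R^n$ are simply $\delta$-neighborhoods of lines, i.e., one-dimensional sets with $(n-1)$-dimensional cross-sections.

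Next, the hypothesis ``$X$ is not contained in any $k$-plane'' plays exactly the role of the ``not on a line'' hypothesis of Theorem \ref{thm:radial-kaufman}, ruling out enemy scenarios that force concentration of both measures. The proof would split into cases according to whether $\mu$ or $\nu$ places positive mass on a $k$-plane. If $\nu$ concentrates on a $k$-plane $V$, then any $x \in X \setminus V$ (which exists by hypothesis) projects $V$ locally bi-Lipschitzly onto its image in $\mathbb{S}^{n-1}$, yielding $\dim \pi_x(Y \setminus \{x\}) \geq \dim Y$. The symmetric case for $\mu$ reduces to a higher-dimensional Kaufman--Marstrand energy argument analogous to Case 2 of Corollary \ref{cor:cor2.21}. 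Otherwise, both measures are non-concentrated on $k$-planes and we proceed to the bootstrap.

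The bootstrap requires two higher-dimensional black boxes: (i) a base-case analog of Lemma \ref{lem:prop2.4} producing $\beta$-thin tubes for some $\beta = \beta(n, k, \dim X, \dim Y) > 0$, which should follow from the appropriate $\R^n$ version of the Shmerkin non-concentration result; and (ii) a key-lemma analog of Lemma \ref{lem:lem2.8} upgrading $(\sigma, K, 1-\epsilon)$-thin tubes to $(\sigma+\eta, K', 1-5\epsilon)$-thin tubes. The main obstacle, and the reason the hypothesis $\dim Y > k - 1/k - \eta$ appears, lies in (ii): by contradiction one builds a discretized line-Furstenberg configuration in $\R^n$ consisting of tubes of ``dimension'' $\approx \sigma$ through points of dimension $\approx \dim Y$, and the bootstrap relies on an $\epsilon$-improvement to the trivial dimensional lower bound for such configurations. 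This higher-dimensional Furstenberg $\epsilon$-improvement is currently only available when $\dim Y$ exceeds the threshold $k - 1/k$, reflecting the state of the art for line-sets through $k$-dimensional point sets in $\R^n$; this matches the theorem's hypothesis exactly, and for $k = 1$ the threshold reads $1 - 1 = 0$, explaining why no lower bound on $\dim Y$ is needed. The most delicate step, I expect, is verifying that the incremental gain $\eta = \eta(\sigma, \dim X, \dim Y)$ extracted from this $\epsilon$-improvement is uniform in $\sigma$ over the compact interval $[\beta, \min\{\dim X, \dim Y\})$, so that finitely many iterations lift $\beta$ all the way to $\min\{\dim X, \dim Y\}$ without the increment degrading to zero.

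Once both black boxes are in place, the closing argument mirrors Corollary \ref{cor:cor2.21}: a finite induction on the key lemma yields $(\sigma, K, \tfrac{1}{2})$-thin tubes for any prescribed $\sigma < \min\{\dim X, \dim Y\}$, and Lemma \ref{lem:KEY thin tubes} together with a supremum over $\sigma$ completes the proof of Theorem \ref{thm:thm4.2.ii}.
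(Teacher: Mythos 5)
Your proposal is structurally closer to Ren's later proof of the strengthened version of this theorem than to OSW's own proof of Theorem 4.2.ii, and it contains a misdiagnosis of where the threshold $k - 1/k - \eta$ comes from. You assert that ``this higher-dimensional Furstenberg $\epsilon$-improvement is currently only available when $\dim Y$ exceeds the threshold $k - 1/k$.'' That is not the case: at the time OSW proved Theorem 4.2.ii, no higher-dimensional Furstenberg $\epsilon$-improvement existed at all, which is precisely why they could not run the \S 2 bootstrap directly in $\R^n$. The threshold $k - 1/k - \eta$ is an artifact of a different mechanism in OSW's argument -- a reduction to the planar Theorem 1.1 via orthogonal projections onto lower-dimensional subspaces, with Proposition 4.3 handling the passage to the 1-codimensional case; the loss to $k - 1/k$ is the cumulative cost of those projection steps. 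When Ren later proved the $\R^n$ Furstenberg $\epsilon$-improvement and ran the bootstrap directly (which is essentially the program you sketch), he obtained the sharper threshold $\dim Y > k - 1$ for the \emph{full} range $\sigma < \min\{\dim X, \dim Y\}$, not a partial range above $k - 1/k$. So if your proposed route were carried out, it would prove Ren's stronger theorem, not the stated Theorem 4.2.ii; conversely, it does not explain why OSW's formulation has the weaker hypothesis.

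The remaining structure of your outline -- thin tubes in $\R^n$, the case split on whether mass concentrates in a $k$-plane, the base-case black box, uniformity of the increment $\eta$ over a compact parameter set, and the closing induction -- is a correct abstraction of what such a bootstrap must look like and does match Ren's eventual argument in spirit. But to recover OSW's actual Theorem 4.2.ii as stated, you would instead need to present the projection-based reduction and show how iterating the planar result (whose Furstenberg input you do have) produces the $k - 1/k - \eta$ bound. That reduction, not a hypothetical partial Furstenberg bound, is the missing idea.
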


Here, notably, the lower bound of $\dim Y> k-1/k - \eta$ was conjectured to not be the correct lower bound; rather, OSW conjectured that the lower bound can be replaced by $\dim Y>k-1$. This conjecture has been resolved in recent work of Kevin Ren \cite{ren2023discretized}. In particular, he showed the following:
\begin{thm}[\cite{ren2023discretized} Theorem 1.1]
    Let $X,Y\subset \R^n$ be Borel sets with $\dim X, \dim Y\leq k$ for some integer $1\leq k\leq n$. If $X$ is not contained in a $k$-plane, then 
    \[
    \sup_{x\in X} \dim \pi_x(Y\setminus \{x\}) \geq \min \{\dim X, \dim Y\}.
    \]
\end{thm}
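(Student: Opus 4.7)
The plan is to follow the bootstrapping template of OSW's proof of Theorem \ref{thm:radial-kaufman} in higher dimensions, replacing planar tubes with $\delta$-neighborhoods of rays in $\R^n$ and replacing the planar $\epsilon$-improved Furstenberg estimate with a sharper higher-dimensional incidence ingredient that remains uniform all the way up to the endpoint $\dim X,\dim Y = k$. First, I would reduce the theorem to the statement that for any $s < \min\{\dim X,\dim Y\}$ and any $s$-Frostman probability measures $\mu$ on $X$ and $\nu$ on $Y$ supplied by Frostman's lemma, the pair $(\mu,\nu)$ has $\sigma$-thin tubes in $\R^n$ for every $\sigma < s$. The straightforward $n$-dimensional analog of Lemma \ref{lem:KEY thin tubes} (Fubini followed by the Frostman-to-dimension criterion) then yields $\sup_{x\in X}\dim\pi_x(Y\setminus\{x\}) \ge s$, and letting $s \uparrow \min\{\dim X,\dim Y\}$ closes the argument.

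Next, I would establish a strictly positive base case $\beta = \beta(s,n,k) > 0$ for which $(\mu,\nu)$ has $(\beta,K,1-\epsilon)$-thin tubes. The hypothesis that $X$ is not contained in any $k$-plane, combined with $\dim X \le k$, produces quantitative non-concentration of $\mu$ off every $k$-plane, and in particular off every $(k-1)$-dimensional $\delta$-slab; an analogous assertion holds for $\nu$ after a compactness reduction in the spirit of \cite{orponen2018dimension}. This delivers the tube non-concentration hypothesis required by the $n$-dimensional analog of Lemma \ref{lem:prop2.4}, giving some $\beta > 0$ to seed the bootstrap.

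The core of the argument is a higher-dimensional key lemma: if $(\mu,\nu)$ has $(\sigma,K,1-\epsilon)$-thin tubes with $\beta \le \sigma < s$, then it has $(\sigma+\eta,K',1-C\epsilon)$-thin tubes for some $\eta = \eta(s,\sigma,n,k) > 0$ that is uniform on compact subsets of $\{\sigma < s\}$. I would argue by contradiction, exactly as in \S\ref{ss:thm1.1-key-lemma}: at a carefully chosen dyadic scale ${\rm r}$, extract a large set $\mathbf X \subset X$ and, for each $x \in \mathbf X$, an $({\rm r},\sigma,{\rm r}^{-O(\eta)})$-set $\mathcal T_x$ of ``bad'' ${\rm r}$-tubes through $x$ whose $\nu$-mass witnesses the failure of $(\sigma+\eta)$-thin tubes. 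After a two-ends reduction peels off concentrated tubes, what remains is a discretized Furstenberg-type incidence configuration of points and tubes in $\R^n$, to which a sharp $\epsilon$-improved incidence bound is applied to force $\sigma \ge s$, producing the desired contradiction. Iterating this key lemma finitely many times carries the exponent from $\beta$ up to any $\sigma < s$, completing Step 1.

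The main obstacle, and precisely the point at which OSW's Theorem \ref{thm:thm4.2.ii} required the restriction $\dim Y > k - 1/k - \eta$, is the higher-dimensional incidence ingredient invoked inside the key lemma. To remove the restriction, one must replace the previous Furstenberg-type bound by a genuinely new $\epsilon$-improved discretized estimate for $\delta$-tubes in $\R^n$ that applies uniformly whenever the base-point set is a $(\delta,s)$-set not contained in any $k$-plane and $s \in (k-1,k]$, with $\eta$ bounded away from zero up to the endpoint. Producing such an estimate---plausibly by a multiscale decomposition that exploits the non-$k$-planarity of $X$ to escape the degenerate configurations responsible for the barrier in Theorem \ref{thm:thm4.2.ii}, together with (a higher-dimensional version of) the Fu--Ren incidence theorem quoted as Theorem \ref{thm:fu-ren}---is the crux of Ren's contribution. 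Once it is available, Steps 1--3 assemble into the full proof.
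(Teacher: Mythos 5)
Your overall blueprint---reduce to $\sigma$-thin tubes via the $n$-dimensional analog of Lemma \ref{lem:KEY thin tubes}, seed with a positive base case, and bootstrap via a key lemma driven by a discretized $\epsilon$-improved incidence estimate---does match the high-level structure of Ren's argument as described in \S\ref{ss:Section 4.1}. But two points need repair. First, Ren does not run the bootstrap directly in $\R^n$ for arbitrary $k$ as you propose. He establishes only the $1$-codimensional case $k = n-1$ (where $X$ avoids every hyperplane and the tubes are $\delta$-neighborhoods of lines in $\R^n$), and then obtains the remaining values of $k$ via the reduction in OSW Proposition 4.3, which uses a generic orthogonal projection $\R^n \to \R^{k+1}$ to preserve the Frostman exponents and the non-$k$-planarity. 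By omitting this reduction, your plan implicitly demands a Furstenberg-type incidence estimate adapted simultaneously to every codimension, a much harder target than what is actually needed.

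Second, you misidentify the incidence ingredient that is the ``crux.'' What Ren needs (and proves, as Theorem 1.3 of his paper) is a higher-dimensional generalization of the Orponen--Shmerkin $\epsilon$-improved Furstenberg set bound---that is, the analog of Theorem \ref{thm:furstenberg}, which drives OSW's Kaufman-type bootstrap in \S\ref{s:Section 2}. You instead invoke ``(a higher-dimensional version of) the Fu--Ren incidence theorem'' (Theorem \ref{thm:fu-ren}), but the Fu--Ren ball--tube estimate is the engine behind OSW's \emph{Falconer}-type Theorem 1.2 in \S\ref{s:Section 3}, and its conclusion ($\sigma \ge s+t-1-\zeta$) does not produce the $\min\{\dim X,\dim Y\}$ lower bound you are after. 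The statement at hand is the Kaufman-type theorem upgraded to higher dimensions, so the Furstenberg-set ingredient, not the Fu--Ren ingredient, must be upgraded. With those two corrections---route through $k=n-1$ via Proposition 4.3, and replace the Fu--Ren reference with the $\epsilon$-improved Furstenberg bound---your outline aligns with the paper's account of Ren's proof.
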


In the proof of this theorem, Ren first strengthens the planar $\epsilon$-improved Furstenberg set estimate to higher dimensions (\cite{ren2023discretized} Theorem 1.3). After proving this result, he uses a bootstrapping method much like that in \S2 of OSW to directly prove the 1-codimensional cases (Theorem \ref{thm:thm4.2.ii} above) with the conjectured lower bound. Proving this for the 1-codimensional cases completes the proof for all other values of $k$ by the same reduction done in OSW Proposition 4.3.

Ren's result in \cite{ren2023discretized} was further utilized in two other recent papers: one on the Falconer distance set problem \cite{du2023new} and one on weighted refined decoupling estimates \cite{du2023weighted}, both by Du--Ou--Ren--Zhang.

\subsection{Generalizing Theorem 1.2} \label{ss:Section 4.2}

In the above subsection, we briefly discussed current work being done completing a higher dimensional Theorem 1.1. However, in OSW, not much was done in the way of generalizing Theorem 1.2. 

To some extent, this makes sense for, as we saw in the proof of Theorem 1.2, the key ingredient was the Fu--Ren \textit{planar} incidence estimate between $\delta$-balls and $\delta$-tubes. However, if this incidence estimate were to be generalized to $\R^n$, one might expect the framework of OSW to carry over (due to the generality of Lemma \ref{lem:KEY thin tubes}). This is precisely done in ongoing work of the first author, Fu, and Ren. Furthermore, in this paper, the first author, Fu, and Ren exhibit a shorter proof of OSW's Theorem 1.2 utilizing Theorem \ref{thm:bright thm1}.

\subsection{From point-line geometry to radial projections}
In the course of studying \cite{orponen2022kaufman}, we focused on many of the geometric parallels between radial projection of fractal sets $X \subset \mathbb{R}^2$ and the incidence of point sets $P \subset \mathbb{R}^2$ and their associated set of lines $\mathcal{L}(P)$. While Beck's theorem (see \S\ref{ss:becks theorem}) was explicitly mentioned by the authors of \cite{orponen2022kaufman}, we found the following two results from point-line geometry to be especially interesting.

\begin{thm}[\cite{beck1983lattice} Weak Dirac conjecture]\label{diracconj}
    Suppose that $P \subset \mathbb{R}^2$ is a finite point set, with $|P| = N$, and that not all points of $P$ are collinear. Let $\mathcal{L}(P)$ denote the set of connecting lines for $P$. Then,
    \begin{equation}\label{dirac}
        \max_{p\in P}|\{\ell\in\mathcal L(P):p\in \ell\}|\gtrsim N.
    \end{equation}
\end{thm}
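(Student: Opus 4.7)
The plan is to deduce the weak Dirac conjecture directly from Beck's theorem (Theorem \ref{becktheorem}), which dichotomizes $P$ into two regimes: either some line $\ell_0$ contains $\gtrsim N$ points of $P$, or $P$ determines $\gtrsim N^2$ distinct lines in $\mathcal{L}(P)$. I would handle each alternative separately, showing in each case that some point of $P$ lies on $\gtrsim N$ connecting lines.

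In the ``rich line'' case, I would pick any point $p \in P \setminus \ell_0$; such a $p$ exists because $P$ is not collinear by hypothesis. The lines through $p$ determined by the $\gtrsim N$ points of $\ell_0 \cap P$ are pairwise distinct, since two distinct lines through the common point $p$ cannot share a second point on $\ell_0$. This immediately gives $|\{\ell \in \mathcal{L}(P) : p \in \ell\}| \gtrsim N$ and settles the case.

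In the ``many lines'' case, where $|\mathcal{L}(P)| \geq c_0 N^2$ for an absolute constant $c_0 > 0$, the argument is a double count. Writing $L_p := |\{\ell \in \mathcal{L}(P) : p \in \ell\}|$ and $M := \max_{p \in P} L_p$, the incidences between $P$ and $\mathcal{L}(P)$ satisfy
\begin{equation*}
NM \;\geq\; \sum_{p \in P} L_p \;=\; \sum_{\ell \in \mathcal{L}(P)} |\ell \cap P| \;\geq\; 2|\mathcal{L}(P)| \;\geq\; 2 c_0 N^2,
\end{equation*}
where the second-to-last inequality uses that every line in $\mathcal{L}(P)$ passes through at least two points of $P$. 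Dividing by $N$ yields $M \gtrsim N$, which is the desired conclusion.

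The substantive step is really the invocation of Beck's theorem itself; once that dichotomy is in hand, both cases reduce to an elementary counting or pigeonhole argument. If one instead wishes to reprove the weak Dirac conjecture from scratch, the main obstacle is precisely the content of Beck's theorem, namely producing either a rich line or a superlinear lower bound on $|\mathcal{L}(P)|$---which, as emphasized in Appendix \ref{s:Beck}, is itself a short consequence of an $\epsilon$-improvement to the trivial incidence bound between $P$ and $\mathcal{L}(P)$.
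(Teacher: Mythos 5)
The paper does not actually prove Theorem~\ref{diracconj}; it simply cites the result to Beck \cite{beck1983lattice} and moves on, so there is no "paper proof" for your argument to match. What you give is a correct and standard deduction of the weak Dirac conjecture from Beck's theorem (Theorem~\ref{becktheorem}), which the paper does prove in Appendix~\ref{s:Beck}. In the rich-line case, your claim that the $\gtrsim N$ points of $\ell_0 \cap P$ span pairwise distinct lines through any $p \in P \setminus \ell_0$ is right: two distinct $q_1, q_2 \in \ell_0 \cap P$ cannot both lie on a single line through $p$, since that line would then contain $q_1, q_2$ and hence equal $\ell_0$, which misses $p$. In the many-lines case, the double count
\[
NM \;\geq\; \sum_{p\in P} L_p \;=\; |I(P,\mathcal L(P))| \;=\; \sum_{\ell\in\mathcal L(P)} |\ell\cap P| \;\geq\; 2|\mathcal{L}(P)| \;\gtrsim\; N^2
\]
is correct and uses only that every connecting line contains at least two points of $P$. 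So your proposal is a complete proof given Beck's theorem; it just makes explicit a reduction the paper leaves implicit by citation.
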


The special case of Theorem 1.1 in \cite{orponen2022kaufman} where $X=Y$ can be viewed as a continuum analog of the Weak Dirac conjecture, because the cardinality of the set of lines in $\mathcal L(P)$ containing $p$, which equals $|\pi_p(P\setminus\{p\})|$, can be viewed as the analog of the dimension of $\pi_x(X\setminus\{x\})$ in the continuum setting.

% In the discrete world, the Weak Dirac Conjecture is a formal consequence of Beck's theorem (Theorem \ref{becktheorem}).

% \begin{proof}[Proof of Theorem \ref{diracconj}]
%     We have two cases to consider,
%     \begin{enumerate}[label=\textbf{\arabic*.}]
%         \item There exists some $\ell \in \mathcal{L}(P)$ such that $|P\cap\ell| \geq c_1 N$; or,
%         \item The set of connecting lines $\mathcal{L}(X)$ satisfies $| \mathcal{L}(X)| \geq c_2 N^2$. 
%     \end{enumerate}

%     In the first case, we know that $P \setminus \ell \neq \varnothing$. So, let $x_0$ be some point of $P$ not on $\ell$. Then each point in $P\cap \ell$ determines (at least) $c_1 N$-many lines through $x_0$.

%     % \medskip

%     In the second situation, define for each $p \in P$ the line set $\mathcal{L}_p := \{\ell \in \mathcal{L}(P) : p \in \ell\}$. We claim that there exists $p_0 \in P$ such that $\# \mathcal{L}_{p_0} \geq  \lceil \frac{c_2}{2} N \rceil$. If not, then, one necessarily has:
%     \begin{equation*}
%         \# \mathcal{L}(P) \leq \sum_{p \in P} \# \mathcal{L}_p \leq N \cdot \left\lfloor \frac{c_2}{2} N \right\rfloor < c_2 N^2. 
%     \end{equation*}
%     So, take $c = c_2/2$ in the conclusion of Theorem \ref{diracconj} to obtain the result.
% \end{proof}

The next theorem is a refinement of Beck's theorem that takes into account the number of points which lie on a given line of $\mathcal L(P)$.

\begin{thm}[\cite{beck1983lattice} Erd\H{o}s--Beck theorem]\label{erdosthm}
    Let $0\le k \le N-2$. For a finite set $P \subset \mathbb{R}^2$, with $|P| = N$, let $\mathcal{L}(P)$ denote the set of all lines spanned by pairs of points in $P$. If $\max_{\ell \in \mathcal{L}(P)} |P\cap \ell| = N - k$, then $| \mathcal{L}(P)| \gtrsim N \cdot k$, with a universal implied constant.
\end{thm}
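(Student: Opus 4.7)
The plan is to reduce the result to a short application of the Szemer\'edi--Trotter incidence theorem together with Beck's theorem (Theorem \ref{becktheorem}), splitting the argument into two regimes based on the size of $k$ relative to $N$. Let $\ell_0 \in \mathcal{L}(P)$ be a line achieving the maximum, so $|P \cap \ell_0| = N-k$, and set $Q := P \setminus \ell_0$, which has $|Q| = k$. The key geometric observation is that for any $q \in Q$, the $N-k$ lines $\overline{qp}$ joining $q$ to the points $p \in P \cap \ell_0$ are pairwise distinct and all different from $\ell_0$, since $q \notin \ell_0$. Thus each $q \in Q$ is incident to at least $N-k$ lines of $\mathcal{L}(P) \setminus \{\ell_0\}$, yielding
\[
I\big(Q,\, \mathcal{L}(P) \setminus \{\ell_0\}\big) \,\geq\, k(N-k).
\]

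In the first regime, suppose $k \leq cN$ for a small absolute constant $c < 1$ to be fixed. Then $N-k \gtrsim N$, so the above count gives $I \gtrsim Nk$. Writing $L := |\mathcal{L}(P) \setminus \{\ell_0\}|$ and applying Szemer\'edi--Trotter to the $k$ points of $Q$ against these $L$ lines,
\[
Nk \,\lesssim\, I \,\lesssim\, (kL)^{2/3} + k + L.
\]
For $N$ at least a fixed absolute constant, the term $k$ cannot dominate. In each of the two remaining regimes, one obtains $L \gtrsim Nk$: if $(kL)^{2/3}$ dominates, then $L \gtrsim k^{1/2}N^{3/2} \geq Nk$ (since $N \geq k$); and if $L$ dominates, the conclusion $L \gtrsim Nk$ is immediate. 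Either way, $|\mathcal{L}(P)| \geq L \gtrsim Nk$.

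In the complementary regime $k > cN$, apply Beck's theorem to the point set $Q$ of cardinality $k$. The conclusion is that either some line contains at least $c_{\mathrm{Beck}} \cdot k$ points of $Q$, or $|\mathcal{L}(Q)| \gtrsim k^2$. The first alternative yields a line $\ell$ with $|P \cap \ell| \geq c_{\mathrm{Beck}}\, k \geq c_{\mathrm{Beck}}\, cN$; choosing $c$ close enough to $1$ so that $c_{\mathrm{Beck}}\, c > 1 - c$ (i.e., $c > 1/(1 + c_{\mathrm{Beck}})$) contradicts the bound $|P \cap \ell| \leq N - k < (1-c)N$. Thus only the second alternative survives, and $|\mathcal{L}(P)| \geq |\mathcal{L}(Q)| \gtrsim k^2 \geq cNk$.

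The main obstacle is not conceptual but bookkeeping: the threshold $c \in (0,1)$ separating the two regimes must be compatible with the absolute constant in Beck's theorem, and one must verify that the Szemer\'edi--Trotter analysis of Case 1 yields uniform control for all $k$ up to this threshold. Beyond this constant-matching, all substantive inputs --- the Szemer\'edi--Trotter theorem and Beck's theorem --- are already developed (implicitly or explicitly) in Appendix \ref{s:Beck}, so no new machinery is required.
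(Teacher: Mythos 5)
The paper does not prove the Erd\H{o}s--Beck theorem --- it is cited directly from \cite{beck1983lattice} and used as known background --- so there is no in-paper argument to compare against. Your proof is correct and follows the standard route to this result. The key incidence count $I(Q,\mathcal{L}(P)\setminus\{\ell_0\}) \geq k(N-k)$ is right (the lines $\overline{qp}$, $p\in P\cap\ell_0$, are distinct because distinct $p$'s would otherwise force $q\in\ell_0$), the Szemer\'edi--Trotter case analysis is right (the crucial inequality $N^{3/2}k^{1/2}\geq Nk$ follows from $k\leq N-2<N$, and the boundary cases $k=0$ and bounded $N$ are harmless), and the constant-matching $c>1/(1+c_{\mathrm{Beck}})$ is exactly what makes the concentration alternative of Beck's theorem contradict the hypothesis $\max_\ell|P\cap\ell|=N-k<(1-c)N$. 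One small caveat: Beck's theorem needs $|Q|=k>1$, which the regime $k>cN$ with $c>1/2$ guarantees once $N>2/(1-c)$, and the finitely many remaining $N$ are again absorbed into the constant.

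One remark on your closing sentence: Appendix \ref{s:Beck} deliberately stops short of the full Szemer\'edi--Trotter bound and works with the weaker $\epsilon$-improvement $I\lesssim m+n+m^{1/2+\epsilon}n^{1-2\epsilon}$. It is worth noting that this weaker bound actually suffices for your Case~1 as well: with $n=k$ and $m=L$, if the cross term dominates one gets $L\gtrsim N^{2/(1+2\epsilon)}k^{4\epsilon/(1+2\epsilon)}$, and the ratio of this to $Nk$ is $(N/k)^{(1-2\epsilon)/(1+2\epsilon)}\geq 1$ for any $\epsilon\in(0,1/2)$ since $N\geq k$. So your argument can indeed be run entirely with the Appendix~\ref{s:Beck} machinery, which is pleasingly in the spirit of the paper's theme that an $\epsilon$-improvement over Cauchy--Schwarz already yields the sharp Beck-type statements --- but invoking the full Szemer\'edi--Trotter theorem, as you did, is the cleaner choice when discreteness is not the point.
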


%\begin{cor}[A continuous weak Dirac conjecture]\label{thm:contdirac}
    %Let $X \subset \mathbb{R}^2$ be a Borel set not contained in any line. Suppose $0 < \rho \ll 1$ is given. Then, there exists a point $x = x_{\rho} \in X$ such that,
    %\begin{equation*}
    %\dim \pi_x (X \setminus \{x\}) \geq \min \{\dim X - \rho, 1 \}
    %\end{equation*}
    %Further, if the set $X \subset \mathbb{R}^2$ is compact, we may find a point $x_* \in X$ such that,
    %\begin{equation*}
       %\dim \pi_{x_*} (X \setminus \{x_*\}) = \min \{\dim X, 1\}
    %\end{equation*}
%\end{cor}

%\begin{proof}
%This follows directly from Theorem 1.1 of \cite{orponen2022kaufman} in the special case where $X = Y$.
%\end{proof}

%The existence of this point $x_*$ and its associated line set $\mathcal{L}_{x_*}$ is what we call a generalization of the Weak Dirac Conjecture. That is, a single point $x_* \in X$ which is ``rich" in intersection with $\mathcal{L}(X)$. 

The method used by OSW to prove Beck's theorem also provides partial progress towards a continuum analog of the Erd\H{o}s--Beck theorem.

\begin{thm}[Towards a continuous Erd\H{o}s--Beck theorem]\label{thm:progressEB}
    Let $X \subset \mathbb{R}^2$ be a Borel set. Suppose there exists some $0 \leq t \leq \dim X$ such that $\dim (X \setminus \ell) \geq \dim X - t$ for every line $\ell \subset \mathbb{R}^2$. Then the set $\mathcal{L}(X)$ of connected lines for $X$ satisfies
    \begin{equation}
        \dim \mathcal{L}(X) \geq \min \{2 \dim X - 2t, 2\}.
    \end{equation}
\end{thm}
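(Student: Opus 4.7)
The plan is to adapt OSW's derivation of the continuum Beck theorem (Corollary 1.3) from Theorem \ref{thm:radial-kaufman}, running the same Frostman-measure construction but with the reduced exponent $s := \dim X - t$ in place of $\dim X$. Setting $d := \dim X$, we may assume $s > 0$ and that $X$ is not contained in any single line (otherwise the conclusion is trivial). It then suffices to prove $\dim \mathcal{L}(X) \geq 2\sigma$ for every $\sigma < \min\{s, 1\}$, since letting $\sigma \nearrow \min\{s, 1\}$ yields $\dim \mathcal{L}(X) \geq \min\{2s, 2\} = \min\{2(d-t), 2\}$.

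The first task is to manufacture a $\sigma$-Frostman probability measure $\mu$ on $X$ with $\mu(\ell) < 1$ for every line $\ell \subset \R^2$, so that Corollary \ref{cor:cor2.21} applies to the pair $(\mu, \mu)$. Start with any $\sigma$-Frostman $\mu_0$ produced by Frostman's lemma; if $\mu_0(\ell) < 1$ for every $\ell$, take $\mu := \mu_0$. Otherwise some line $\ell^\ast$ carries all the mass of $\mu_0$, and the hypothesis $\dim(X \setminus \ell^\ast) \geq s > \sigma$ supplies a second $\sigma$-Frostman measure $\mu_1$ supported in $X \setminus \ell^\ast$. The convex combination $\mu := \tfrac{1}{2}(\mu_0 + \mu_1)$ is $\sigma$-Frostman with at most a doubled Frostman constant and satisfies $\mu(\ell) \leq \tfrac{1}{2}$ for every line $\ell$: for $\ell \neq \ell^\ast$ the intersection $\ell \cap \ell^\ast$ is at most a point and $\mu_0$ gives zero to points, so $\mu_0(\ell) = 0$, while $\mu_1(\ell^\ast) = 0$ by construction.

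With $\mu$ in place, Corollary \ref{cor:cor2.21} furnishes a set $G \subset X \times X$ with $(\mu \times \mu)(G) > 0$ witnessing $\sigma'$-thin tubes for $(\mu, \mu)$ for every $\sigma' < \sigma$. Consider the line map $L \colon (X \times X) \setminus \Delta \to \mathcal{A}(2,1)$ sending $(x, y)$ to the unique line through $x$ and $y$, and push the restriction $(\mu \times \mu)|_{G \setminus \Delta}$ forward to obtain a non-trivial Borel measure $\lambda$ supported on $\mathcal{L}(X)$. The task then reduces to showing $\dim \lambda \geq 2\sigma'$, which on combining with $\sigma' \nearrow \sigma \nearrow \min\{s, 1\}$ delivers the conclusion.

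The principal obstacle is this last dimensional bound on $\lambda$, where OSW's continuum-Beck argument is used essentially unchanged. The natural route is to bound $\lambda(B(\ell, r))$ on Grassmannian balls via the geometric observation that $L(x, y) \in B(\ell, r)$ forces both $x$ and $y$ into the $O(r)$-neighborhood of $\ell$. Conditioning on $x$, the thin-tube bound $\mu(T' \cap G|_x) \lesssim r^{\sigma'}$ for the $O(r)$-tube $T'$ through $x$ parallel to $\ell$ controls the $y$-integral; the technical crux is to couple this with a matching $r^{\sigma'}$ bound on the $\mu$-mass of the $O(r)$-neighborhood of $\ell$ itself, extracted by invoking the symmetric thin-tube condition at an appropriate reference point in $X$ lying on that neighborhood, uniformly in $\ell \in \mathcal{A}(2,1)$. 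Assembling the two factors yields $\lambda(B(\ell, r)) \lesssim r^{2\sigma'}$, and the standard energy-dimension inequality then gives $\dim \lambda \geq 2\sigma'$, completing the proof.
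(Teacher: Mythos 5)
Your proposal diverges from the paper's proof at the structural level, and the divergence is where the gap appears. The paper first introduces the exceptional set $B_t := \{x\in X : \dim\pi_x(X\setminus\{x\}) < \min\{\dim X - t, 1\}\}$, shows $\dim(X\setminus B_t) = \dim X$ by the argument of OSW Corollary~1.4 (which uses Theorem~\ref{thm:radial-kaufman}), identifies $\bigcup_{x\in X\setminus B_t}\mathcal{L}_x$ as an $(s,\dim X)$-Furstenberg set of lines with $s=\min\{\dim X - t,1\}$, and then applies the classical Lutz--Stull/Wolff bound $\dim F\geq s+\min\{s,\dim X\}=2s$. Your plan skips the Furstenberg set theorem entirely and tries to get the factor of~$2$ from a Frostman bound on the pushforward measure $\lambda = L_*((\mu\times\mu)|_{G\setminus\Delta})$.

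The gap is in that last Frostman estimate. Your argument gives
\[
\lambda(B(\ell,r)) \;\leq\; \int_{\ell^{(Cr)}}\mu\bigl(\ell^{(Cr)}\cap G|_x\bigr)\,d\mu(x)\;\lesssim\; r^{\sigma'}\,\mu\bigl(\ell^{(Cr)}\bigr),
\]
and you propose to close this with a ``matching $r^{\sigma'}$ bound on the $\mu$-mass of the $O(r)$-neighborhood of $\ell$,'' extracted from ``the symmetric thin-tube condition at an appropriate reference point in $X$.'' But the thin-tube condition at a base point $x_0\in\spt\mu\cap\ell^{(Cr)}$ only controls $\mu\bigl(\ell^{(Cr)}\cap G|^{x_0}\bigr)$, not $\mu\bigl(\ell^{(Cr)}\bigr)$: the slice $G|^{x_0}$ can omit an arbitrarily large portion of $\spt\mu$, because the definition of $(\sigma,K,c)$-thin tubes only forces $\mu(G|^{x})\geq c$ \emph{on average} over $x$, not for the particular $x_0$ that happens to lie in a given tube. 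The Frostman condition on $\mu$ is also no help, since for $\sigma<1$ it only yields the vacuous $\mu(\ell^{(r)})\lesssim r^{\sigma-1}$. Worse, Corollary~\ref{cor:cor2.21} applies perfectly well to measures with $\mu(\ell_0)\in(0,1)$ for some line $\ell_0$, in which case $\mu(\ell_0^{(r)})\geq\mu(\ell_0)>0$ for all $r$ and the wanted decay $\mu(\ell_0^{(r)})\lesssim r^{\sigma'}$ fails outright. As written, your argument only produces $\lambda(B(\ell,r))\lesssim r^{\sigma'}$, hence $\dim\mathcal{L}(X)\gtrsim\sigma'\to s$, which is half the claimed exponent.

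This is not a fixable technicality in the measure $\lambda$: the factor of two is precisely what the $(s,t)$-Furstenberg set bound $\dim F\geq 2s$ (for $t\geq s$) encodes, and its proof requires a genuine bush-type double counting or incidence argument, not a product of two scalar Frostman estimates. The paper's route through $B_t$, Theorem~\ref{thm:radial-kaufman}, and the classical Furstenberg bound is exactly where that double counting is supplied. Your preliminary steps --- the averaged measure $\mu=\tfrac12(\mu_0+\mu_1)$ satisfying $\mu(\ell)\leq\tfrac12$ for all lines, and the invocation of Corollary~\ref{cor:cor2.21} --- are sound; the proof breaks down precisely at the point where you would need a Furstenberg set theorem and do not invoke one.
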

    \textit{Proof.} This follows from the same methodology as the proof of Corollary 1.4 in \cite{orponen2022kaufman}. The main difference being that we consider the set
    \begin{equation*}
        B_t : =\{x \in X : \dim \pi_x (X \setminus \{x\}) < \min  \{\dim X - t, 1\} \},
    \end{equation*}
    and then demonstrate as before that
    \begin{equation*}
        \dim (X \setminus B_t) = \dim X.
    \end{equation*}
    This argument is essentially unchanged from that of OSW in \cite{orponen2022kaufman}. 
    
    For each $x \in X \setminus B_t$, we identify $\pi_x(X\setminus \{x\})$ with the collection of lines through $x$, denoted $\mathcal{L}_x$. Note that as $\dim \pi_x(X\setminus \{x\})\geq \min \{\dim X -t, 1\}$, we have that $\dim \mathcal{L}_x \geq \min \{\dim X - t, 1 \} := s$. We then have 
    \begin{equation*}
        \mathcal{L}(X) \supset \bigcup_{x \in X \setminus B_t} \mathcal{L}_x,
    \end{equation*}
    and $\bigcup_{x\in X\setminus B_t} \mathcal{L}_x$ contains a $(s,\dim X)$-Furstenberg set of lines $F$ (see Definition \ref{def:furst-lines}). As such, by the classical bound $\dim F \ge s + \min\{s,\dim X\} = 2s$, we have
    \begin{equation*}
        \dim \mathcal{L}(X) \geq \min \{2\dim X - 2t, 2\}. \mathqed
    \end{equation*}
%  definition of dual Furstenberg%

Comparing Theorem \ref{erdosthm} and Theorem \ref{thm:progressEB}, one notices a distinction in numerology. In particular, from the discrete Erd\H{o}s--Beck theorem, one might expect a lower bound in the continuum setting akin to ``$\dim X + t$'', as this is akin to the logarithm of ``$N\cdot k$'' from the discrete setting. In fact, such a lower bound is currently being explored by the first and third authors, utilizing key results from both the work of Kevin Ren discussed in \S\ref{ss:Section 4.1}, and the work of Fu, Ren, and the first author discussed in \S\ref{ss:Section 4.2}. In fact, the first and third authors prove the result in all dimensions for unions of lines.

\appendix
\renewcommand{\theprop}{\Alph{section}.\arabic{prop}\rmfamily}

\section[Proving Beck's theorem using an epsilon-improvement]{Proving Beck's theorem using an \for{toc}{\texorpdfstring{$\bm{\epsilon}$}{epsilon}}\except{toc}{\texorpdfstring{$\epsilon$}{epsilon}}-improvement} \label{s:Beck}

Let $P \subset \R^2$ be a finite set of points, and let $\mathcal{L}$ be a finite set of lines in the plane. Furthermore, let 
\[
I(P,\mathcal{L}) := \{(p,\ell) \in P \times \mathcal L : p \in \ell\}
\]
be the set of \textit{incidences} of $P$ and $\mathcal{L}$.

\subsection[Quantifying an appropriate epsilon-improvement]{Quantifying an appropriate \for{toc}{\texorpdfstring{$\epsilon$}{epsilon}}\except{toc}{\texorpdfstring{$\bm{\epsilon}$}{epsilon}}-improvement}
An elementary argument utilizing the Cauchy--Schwarz inequality yields,
\begin{equation}\label{eq:cauchy--schwarz}
|I(P,\mathcal{L})| \leq |P| + |\mathcal{L}| + (|P||\mathcal{L}|)^{\frac{3}{4}}.
\end{equation}

To prove \eqref{eq:cauchy--schwarz}, by the Cauchy--Schwarz inequality,
\begin{align*}
    |I(P,\mathcal L)|^2 &= \left( \sum_{p\in P}\sum_{\ell\in\mathcal L}1_{p\in \ell} \right)^{\!2} \\
    &\le |P|\sum_{p\in P}\sum_{\ell_1,\ell_2\in\mathcal L}1_{p\in\ell_1}1_{p\in\ell_2} \\
    &= |P| \left( |I(P,\mathcal L)| + \sum_{\ell_1\ne\ell_2}\sum_{p\in P}1_{p\in\ell_1\cap \ell_2} \right).
\end{align*}
Finally, for each pair of distinct lines $\ell_1\ne\ell_2$, there is at most one $p\in P$ contained in $\ell_1\cap \ell_2$, so we have
\[
|I(P,\mathcal L)|^2 \le |P|(|I(P,\mathcal L)|+|\mathcal L|^2),
\]
from which we deduce
% let $|P| = n$, $|\mathcal{L}| =m$, and let $A$ be the $n\times m$ incidence matrix with entries $a_{ij} = 1$ if $x_i \in \ell_j$ and $a_{ij} = 0$ otherwise. Then, $|I(P,\mathcal{L})|$ is the sum of the entries in $A$. Using the Cauchy--Schwarz inequality, we see that 
% \begin{equation*}
%     |I(P,L)| = \sum_{i=1}^n \sum_{j=1}^m a_{ij}
%     = \sum_{j=1}^m \left( 1 \cdot \sum_{i=1}^n a_{ij} \right)
%     \leq m^{1/2} \left( \sum_{j=1}^m \left( \sum_{i=1}^n a_{ij} \right)^{\!\!2} \right)^{\!\!1/2}.
% \end{equation*}
% We now estimate the inner sum. Notice that 
% \begin{equation*}
%     \sum_{j=1}^m \left( \sum_{i=1}^n a_{ij} \right)^{\!\!2} = \sum_{j=1}^m \sum_{i \neq i'} a_{ij} a_{i'j} + \sum_{j=1}^m \sum_{i=1}^n a_{ij}.
% \end{equation*}
\begin{equation*}
    |I (P, \mathcal{L})| \leq |\mathcal L| + |\mathcal L|^{1/2} |P|.
\end{equation*}
Interchanging the roles of points and lines produces the analogous inequality,
\begin{equation*}
|I (P, \mathcal{L})| \leq |P| + |P|^{1/2} |\mathcal L|.
\end{equation*}
Taking the geometric mean of these two inequalities produces \eqref{eq:cauchy--schwarz}. 

To prove Beck's theorem, we will suppose that we have an $\epsilon$-improvement of the estimate in \eqref{eq:cauchy--schwarz} of the following form. Suppose for some $\epsilon \in (0,1/6)$, we knew the incidence estimate
\begin{equation} \label{eq:cauchy--schwarz-epsilon}
    |I(P,\mathcal{L})| \lesssim m + n + m^{1/2 + \epsilon} n^{1 - 2 \epsilon}
\end{equation}
holds for every set of $n$ points and $m$ lines. The case $\epsilon = 0$ is the Cauchy--Schwarz estimate, and $\epsilon = 1/6$ is the Szemer\'edi--Trotter estimate, so \eqref{eq:cauchy--schwarz-epsilon} is an $\epsilon$-improvement of Cauchy--Schwarz in an interpolation sense.

% We claim that this is asymptotically weaker than the Szemer\'{e}di--Trotter estimate,
% \begin{equation}\label{STestimate}
%     |I(P,\mathcal{L})| \lesssim m + n + m^{2/3} n^{2/3}.
% \end{equation}
% Notice that,
% \begin{equation}\label{thetaCS}
%     m^{1/2 + \epsilon} n^{1 - 2 \epsilon} = (m^{\frac{2}{3}} n^{\frac{2}{3}})^{\theta} (m^{\frac{1}{2}} n)^{1 - \theta},
% \end{equation}
% for some $\theta \in (0,1)$ satisfying $\epsilon = \frac{\theta}{6}$. Clearly, when $m^{2/3}n^{2/3} \lesssim m^{1/2} n$, \eqref{STestimate} implies \eqref{eq:cauchy--schwarz-epsilon}. So, suppose the parameters $m,n$ satisfy
% $$
% m^{2/3} n^{2/3} \lesssim m^{1/2} n  \Longrightarrow m^{1/2} \lesssim n
% $$
% Then, clearly, one must have $m^{2/3} n^{2/3} \lesssim n$.

% \medskip

\subsection{The proof of Beck's theorem} We now present a proof of Beck's theorem using the weaker ``$\epsilon$-improvement" incidence estimate of \eqref{eq:cauchy--schwarz-epsilon}. Recall that $P$ is a set of $n$ points and $\mathcal L(P)$ denotes the lines spanned by $P$. Further, we will let $Q := P \times P \setminus \{(x,x):x\in P\}$.

For $r\ge 2$ a dyadic number, let
\begin{equation*}
    \mathcal L_r := \{ \ell \in \mathcal L : |\ell \cap P| \geq r \} 
\end{equation*}
be the collection of \emph{$r$-rich lines} of $\mathcal L$ and
\begin{equation*}
    T_r := \{ (x,y) \in Q  : \exists \, \ell \in \mathcal L_r\setminus\mathcal L_{2r}\ \text{such that}\  x,y \in \ell\}.
\end{equation*}
be the collection of \textit{$r$-connected pairs} of points in $P$. By definition,
\[
|\mathcal L(P)| \sim \sum_{2\le r\le n}|T_r|.
\]
 For each pair $(x,y) \in T_r$, there exists a unique $\ell \in \mathcal L_r$ with $x,y \in \ell$. Each $\ell \in \mathcal L_r$ contains at least $r$ points of $X$, so
\begin{equation*}
    r^2 |\mathcal L_r| \lesssim \binom{n}{2} \sim n^2, \quad \text{i.e.,} \quad |\mathcal L_r| \lesssim \frac{n^2}{r^2}.
\end{equation*}
Plugging this into \eqref{eq:cauchy--schwarz-epsilon} with $\mathcal L_r$ in place of $\mathcal L$ yields, for each $r\ge 2$,
\begin{equation*} \label{eq:upper-incidence-bound}
    |I(P,\mathcal L_r)| \lesssim \frac{n^2}{r^2} + n + \left (\frac{n^2}{r^2} \right)^{\!1/2 + \epsilon} n^{1 - 2 \epsilon} \sim n  + \frac{n^2}{r^{1+2\epsilon}}.
\end{equation*}
In particular,
\begin{equation}\label{eq:richnessest}
    |I(P,\mathcal L_r)| \lesssim \frac{n^2}{r^{1+2\epsilon}}+n, \textrm{ for each } r\ge 2. 
\end{equation}

On the other hand, each line in $\mathcal L_r$ gives rise to at least $r$ incidences (and, of course, no two lines give rise to the same incidence), so 
\begin{equation} \label{eq:lower-incidence-bound}
    r |\mathcal L_r| \leq |I(P,\mathcal L_r)|.
\end{equation}
To estimate the cardinality of the set $\mathcal L(P)$ of lines spanned by $P$, we must estimate each $|T_r|$ and sum over $r$. The map $T_r \to \mathcal L_r$ taking a pair of points to the line containing them is (at least) $r^2$-to-$1$. Combining \eqref{eq:richnessest} with \eqref{eq:lower-incidence-bound} gives,
\begin{equation} \label{eq:j-connected-bound}
    |T_r| \leq r^2 |\mathcal L_r| \lesssim \frac{n^2}{r^{2\epsilon}}+nr.
\end{equation}

Let $C$ be a large constant (to be determined shortly). Summing \eqref{eq:j-connected-bound} over all dyadic $C < r < n/C$ gives:
\begin{align*}
    \sum_{C < r < n/C} |T_r| &\lesssim n^2\sum_{C < r < n/C} \frac{1}{r^{2\epsilon}} + n\sum_{C < r < n/C}r\\ 
    &\lesssim n^2 \left( \frac{1}{C^{2\epsilon}} + \frac{1}{C} \right) \sim \frac{n^2}{C^{2\epsilon}}
\end{align*}

Now, recall that $|T_r|$ was the number of $r$-connected pairs in $Q : = P \times P \setminus \diag (P)$.  So, let $T = \bigcup_{C\leq r \leq n/C} T_r$. Since the $T_r$ are pairwise-disjoint, we have that $|T| \lesssim \frac{n^2}{C^{2 \epsilon}}$. However, the total number of pairs of points forming lines in $\mathcal L$ is $|Q| := \frac{n^2-n}{2}$. Hence, by choosing $C$ large enough depending upon $\epsilon > 0$ (but independent of $n$) we find a set $P'\subset P$ of size $\gtrsim n^2$ that is not $r$-connected for any $C < r < n/C$.

Let $\mathcal L(P')\subset \mathcal{L}(P)$ denote the lines determined by the pairs of points in $P'$. These lines either pass through fewer than $C$-many points, or otherwise pass through more than $n/C$ many-points. The latter satisfies \textbf{(1)} of Theorem \ref{becktheorem}.

For the former, we may assume that all lines of $\mathcal L(P')$ connect $\leq C$-many points. Yet, each line can connect at most $\sim C^2$-many pairs of points from $P'$. Therefore, we see that there must by at least $\frac{n^2}{C^2}$-many lines formed by the pairs of points in $P$. This satisfies \textbf{(2)} of Theorem \ref{becktheorem}, and it concludes the proof.

\section{Tools from geometric measure theory} \label{Appendix B}

A general-purpose tool we use for proving lower bounds for the Hausdorff dimension of sets is the following. Recall that a finite nonzero Borel measure $\mu$ on a metric space $\Omega$ is \define{$\bm{s}$-Frostman} if there exists a constant $C > 0$ such that
\begin{equation*}
    \mu(B(x,r)) \leq C \+ r^s \qquad \forall \+ x \in \Omega \text{ and } \forall \+ r > 0.
\end{equation*}

\begin{lem}[Mass distribution principle] \label{lem:dim lower bound}
    If a Borel set $X$ supports an $s$-Frostman probability measure $\mu$, then $\mathcal H^s(X) > 0$.
\end{lem}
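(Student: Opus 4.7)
The plan is to exploit the standard duality between the Hausdorff measure's definition as an infimum over covers and the fact that a Frostman measure assigns mass proportional to the $s$-th power of the diameter. Concretely, I would argue that any sufficiently fine cover of $X$ must have a total $s$-dimensional cost bounded below by a positive constant depending only on the Frostman constant $C$ and the total mass $\mu(X) = 1$.

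First, I would fix $\delta > 0$ and an arbitrary cover $\{U_i\}_{i \in \mathbb{N}}$ of $X$ with $\mathrm{diam}(U_i) \le \delta$. For each $i$, pick a point $x_i \in U_i$ (if $U_i$ is empty, it contributes nothing). Then $U_i \subset B(x_i, \mathrm{diam}(U_i))$, so the $s$-Frostman condition applied to this ball gives
\begin{equation*}
    \mu(U_i) \;\le\; \mu\bigl(B(x_i, \mathrm{diam}(U_i))\bigr) \;\le\; C \, \mathrm{diam}(U_i)^s.
\end{equation*}
Summing over $i$ and using countable subadditivity together with $\mu(X) = 1$ yields
\begin{equation*}
    1 \;=\; \mu(X) \;\le\; \sum_i \mu(U_i) \;\le\; C \sum_i \mathrm{diam}(U_i)^s.
\end{equation*}

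Next, I would take the infimum over all admissible $\delta$-covers to conclude that $\mathcal{H}^s_\delta(X) \ge 1/C$. Because this lower bound is independent of $\delta$, letting $\delta \downarrow 0$ produces $\mathcal{H}^s(X) \ge 1/C > 0$, as desired.

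There is essentially no obstacle here: the argument is a one-line application of countable subadditivity after invoking the Frostman inequality on a ball containing each cover element. The only minor technical wrinkle is the choice of which ball to use to bound $\mu(U_i)$ — using $B(x_i, \mathrm{diam}(U_i))$ suffices and works in any metric space — but one could equally well use $B(x_i, 2\,\mathrm{diam}(U_i))$ at the cost of an extra factor $2^s$. Either way, the constant relating the Frostman constant $C$ to the lower bound on $\mathcal{H}^s(X)$ is explicit and, importantly for applications throughout the paper, depends only on $C$ and $s$ and not on the ambient metric space.
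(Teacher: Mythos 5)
Your proof is correct and takes essentially the same approach as the paper's: bound the mass of each cover element by the Frostman estimate, sum, and invoke countable subadditivity with $\mu(X)=1$. The only cosmetic difference is that you work with arbitrary $\delta$-covers and enclose each $U_i$ in a ball, whereas the paper starts directly from a cover by balls; both are the standard mass distribution argument.
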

\begin{proof}
    For any cover of $X$ by balls $\{B_i\}_{i=1}^\infty$, by the Frostman condition on $\mu$, we have
    \[
    \sum_{i=1}^\infty r_i^s \ \gtrsim \ \sum_{i=1}^\infty \mu(B_i).
    \]
    By assumption, $\{B_i\}_{i=1}^\infty$ is a cover of $X$ and $X \supset \spt \mu$, so $1 = \mu(X) \le \sum_{i=1}^\infty \mu(B_i)$. As the cover was arbitrary, $\mathcal{H}^s(X) \gtrsim 1$.
\end{proof}
In particular, Lemma \ref{lem:dim lower bound} implies that a Borel set supporting an $s$-dimensional Hausdorff measure has dimension at least $s$.

Converse to the mass distribution is Frostman's lemma (\cite{mattila1995geometry} Theorems 8.8 and 8.17), but what we require is a discretized version due to F\"{a}ssler and Orponen, which we paraphrase here. Let $\mathcal{H}_\infty^s$ denote the \textit{$s$-dimensional Hausdorff content}. It is well-known that a set $A \subset \R^n$ satisfies $\mathcal{H}^s(A) > 0$ if and only if $\mathcal{H}_\infty^s(A) > 0$.

\begin{lem}[\cite{fassler2014restricted} Discrete Frostman's lemma in $\R^2$] \label{lem:discrete-frostman}
    If $A \subset \R^2$ satisfies $\mathcal{H}_\infty^s(A) > 0$, then, for all sufficiently small $\rho > 0$, there exists a $(\rho,s)$-set $B \subset A$ with $|B| \gtrsim \mathcal{H}_\infty^\sigma(A) \+ \rho^{-s}$.
\end{lem}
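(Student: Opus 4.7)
The plan is to pass from the Hausdorff content bound to a $\rho$-discretization of $A$ and then prune that discretization to enforce the non-concentration condition, using the content itself to control the loss from pruning.

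\textbf{Discretization.} I would first take a maximal $\rho$-separated subset $A_\rho \subset A$. Every point of $A$ lies within $\rho$ of some $x \in A_\rho$, so the family $\{B(x,\rho)\}_{x \in A_\rho}$ covers $A$ by balls of diameter $2\rho$, and directly applying the definition of Hausdorff content yields
\[
\mathcal H_\infty^s(A) \leq \sum_{x \in A_\rho} (2\rho)^s = 2^s |A_\rho|\rho^s,
\]
whence $|A_\rho| \gtrsim \mathcal H_\infty^s(A) \rho^{-s}$. At this stage $A_\rho$ has the right cardinality but may still fail the Frostman-type upper bound at scales strictly between $\rho$ and $1$.

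\textbf{Pruning.} I would then enforce the $(\rho, s, C)$-set condition by a greedy pruning. Fix a large constant $C = C(s, \mathcal H_\infty^s(A))$, and iteratively: as long as there exists a ball $B(z, r)$ with $r \geq \rho$ currently containing more than $C r^s |A_\rho|$ points of the working set, delete half of those points. (Normalizing against the frozen initial count $|A_\rho|$, rather than the shrinking current count, simplifies the accounting.) The process terminates because each step strictly decreases a nonnegative integer, and on the surviving set $B$ the inequality $|B \cap B(z, r)| \leq C r^s |A_\rho| \lesssim C' r^s |B|$ holds for all $r \geq \rho$, which is exactly the required non-concentration condition.

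\textbf{Main obstacle.} The crux is bounding the total number of points pruned so that $|B| \gtrsim \mathcal H_\infty^s(A) \rho^{-s}$ survives. My plan is a Vitali-type covering applied to the sequence of overcrowded balls encountered: extract a disjoint subfamily $\{B(z_{i_k}, r_{i_k})\}$ whose $5$-dilates cover the originals, and bound the total deleted mass by the $s$-Hausdorff content of the union, namely $\sum_k (5 r_{i_k})^s$. Since each overcrowded ball contributed at least $C r_{i_k}^s |A_\rho| \gtrsim C \mathcal H_\infty^s(A) (r_{i_k}/\rho)^s$ deletions, dividing the total deletion count by the content bound forces the total pruning loss to be $O(C^{-1} |A_\rho|)$; taking $C$ large preserves $|B| \geq |A_\rho|/2$. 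Making this rigorous in the dynamic setting is the subtle part, since the set shrinks as we prune; a cleaner implementation is to run the procedure one dyadic scale at a time. At each scale $\rho \leq 2^{-k} \leq 1$, apply the Vitali covering argument to the overcrowded $2^{-k}$-cubes before proceeding, sum the $O(\log(1/\rho))$ per-scale losses into a geometric tail, and absorb the sum into $|A_\rho|/2$ for $C$ chosen large enough. Combined with the discretization step, this yields the claimed $(\rho, s)$-set $B$ with $|B| \gtrsim \mathcal H_\infty^s(A)\rho^{-s}$.
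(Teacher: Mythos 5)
Your discretization step is correct, and pruning a $\rho$-net dyadic scale by scale is indeed the skeleton of the cited F\"assler--Orponen argument (which the paper cites but does not reproduce). The gap is in what you prune against. You set the overcrowding threshold at scale $r$ to $C\,r^s|A_\rho|$, but $|A_\rho|$ is only a \emph{lower} bound for $\mathcal{H}^s_\infty(A)\rho^{-s}$ and can be vastly larger; when it is, the pruning can lawfully remove all but a vanishing fraction of $A_\rho$, so $|A_\rho|\gg|B|$ and the conversion $|B\cap B(z,r)|\le Cr^s|A_\rho|\lesssim C'r^s|B|$ forces $C'\sim C\,|A_\rho|/|B|$ to blow up. Concretely, with $s\in(0,2)$ let $A_\rho$ be the union of a genuine $(\rho,s,O(1))$-set of $\sim\rho^{-s}$ points (so $\mathcal{H}^s_\infty\sim1$) and a $\rho$-separated cluster of $L=\rho^{-(s+2)/2}$ points packed into a ball of radius $r_c=\sqrt{L}\,\rho=\rho^{(2-s)/4}$. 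The cluster contributes $o(1)$ to the content, dominates $|A_\rho|$, and after your pruning terminates it still dominates $|B|$ while living inside the ball of radius $r_c\to0$; thus $|B\cap B(z_c,r_c)|/|B|\sim1\gg r_c^s$, so $B$ is not a $(\rho,s,C')$-set for any $C'$ depending only on $s$ and $\mathcal{H}^s_\infty(A)$. The Vitali step does not rescue the loss bound either: it compares a deletion \emph{count} against the $s$-content $\sum_k(5r_{i_k})^s$, and the number of points of $A_\rho$ inside the dilate $5B(z_{i_k},r_{i_k})$ is not controlled by $r_{i_k}^s|A_\rho|$ --- indeed being uncontrolled is exactly what ``overcrowded'' means --- so the claimed $O(C^{-1}|A_\rho|)$ loss bound is simply false.

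The fix is structural rather than a matter of tightening constants. Normalize the threshold by $\rho^{-s}$ instead of $|A_\rho|$: prune each dyadic cube $Q$ of side $\ell$ down to $\lfloor(\ell/\rho)^s\rfloor$ points, processing levels from fine to coarse. Then, rather than bounding the loss as a fraction of $|A_\rho|$, lower-bound $|B|$ \emph{directly} by a covering argument: for each deleted initial point, take the \emph{coarsest} dyadic ancestor $Q$ at which any pruning ever occurred above it. Immediately after that level is processed $Q$ holds $\gtrsim(\ell(Q)/\rho)^s$ survivors, and since no coarser ancestor of $Q$ is ever pruned, this count is never reduced again. The maximal such cubes, together with the $\rho$-cubes of surviving points, form a disjoint family $\{Q_i\}$ covering a $\rho$-neighborhood of $A$, so $\mathcal{H}^s_\infty(A)\lesssim\sum_i\ell(Q_i)^s$ while $|B|\gtrsim\rho^{-s}\sum_i\ell(Q_i)^s$; combining gives $|B|\gtrsim\mathcal{H}^s_\infty(A)\rho^{-s}$. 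With the $\rho^{-s}$ normalization the resulting non-concentration constant is $\sim\mathcal{H}^s_\infty(A)^{-1}$, which is precisely what is needed when this lemma is invoked in Step~4 of the proof of Lemma~\ref{lem:lem2.8}.
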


See \cite{fassler2014restricted} Proposition A.1, where the proof involves only routine manipulations of dyadic cubes.

\section{Relation to orthogonal projections} \label{Appendix C}

The Kaufman estimate for orthogonal projections is closely related to the main Theorem \ref{thm:mainthm1.1}. Recall that the Kaufman estimate in the plane states the following:
\begin{prop}\label{prop:kaufman}
    Suppose $Y\subset \R^2$ is Borel. Then, for $0\leq \sigma < \min \{\dim Y, 1\}$, define the exceptional set 
    \[
    E_\sigma(Y) := \{e \in \mathbb{S}^1 : \dim P_e(Y) < \sigma \}.
    \]
    Then we have 
    \begin{equation}\label{eqn:kaufman}
    \dim E_\sigma(Y) \leq \sigma.
    \end{equation}
\end{prop}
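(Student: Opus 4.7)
The plan is to prove Proposition~\ref{prop:kaufman} by the classical Kaufman energy method (cf.\ Chapter 5 of \cite{mattila2015fourier}), arguing by contradiction: if the exceptional set $E_\sigma(Y)$ had dimension strictly greater than $\sigma$, we would produce a Frostman measure on $E_\sigma(Y)$ that is concentrated enough to make the averaged $\sigma$-energy of the orthogonal projections $P_e\nu$ finite, forcing $\dim P_e(Y) \ge \sigma$ on a $\mu$-positive set and contradicting the definition of $E_\sigma(Y)$.

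To set up, fix $\sigma' \in (\sigma, \dim E_\sigma(Y))$ and $t \in (\sigma, \dim Y)$; the latter choice is possible since $\sigma < \dim Y$. By Frostman's lemma, produce a $\sigma'$-Frostman probability measure $\mu \in \mathcal{P}(E_\sigma(Y))$ and a probability measure $\nu \in \mathcal{P}(Y)$ with $I_\sigma(\nu) < \infty$ (e.g., by taking $\nu$ to be $t$-Frostman). The target is the averaged energy bound
\begin{equation*}
    \int_{\mathbb{S}^1} I_\sigma(P_e \nu) \, d\mu(e) \lesssim I_\sigma(\nu) < \infty,
\end{equation*}
which, once known, yields $I_\sigma(P_e\nu) < \infty$ for $\mu$-a.e.\ $e$, hence $\dim P_e(Y) \ge \sigma$ for $\mu$-a.e.\ $e$, contradicting $\mu(E_\sigma(Y)) = 1$.

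The crux is a short Fubini--layer-cake computation built on the geometric inequality
\begin{equation*}
    \mu\big(\{e \in \mathbb{S}^1 : |P_e(z)| \le \delta\}\big) \+\lesssim\+ \left(\frac{\delta}{|z|}\right)^{\!\sigma'}, \qquad z \in \R^2 \setminus \{0\},\ \delta > 0,
\end{equation*}
which holds because the slab $\{e : |e \cdot z| \le \delta\}$ is a union of two spherical caps of radius $\sim \delta/|z|$, to which the $\sigma'$-Frostman hypothesis on $\mu$ directly applies. Interchanging the order of integration in the target bound and writing $|P_e(x-y)|^{-\sigma}$ via the layer-cake formula, the inner integral in $e$ splits at $\lambda = 1/|x-y|$: the low-$\lambda$ portion integrates trivially to $\sim |x-y|^{-\sigma}$, while the high-$\lambda$ tail converges thanks to the strict inequality $\sigma < \sigma'$ and also contributes $\sim |x-y|^{-\sigma}$. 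Integrating against $d\nu(x)\,d\nu(y)$ returns $I_\sigma(\nu)$, completing the estimate.

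The main obstacle—such as it is—lies in securing the strict gap $\sigma < \sigma'$ needed for the tail integral; this is precisely what forces the proof to proceed by contradiction rather than by taking $\mu$ to be $\sigma$-Frostman directly. Once $\dim E_\sigma(Y) > \sigma$ is hypothesized, Frostman's lemma supplies the requisite $\sigma'$, and the remainder is routine bookkeeping with measures and energies, no $\epsilon$-improvements or incidence geometry needed. This stands in stark contrast to the considerable machinery required for the radial analog Theorem~\ref{thm:mainthm1.1}, highlighting just how much more rigid radial projections are than their orthogonal counterparts.
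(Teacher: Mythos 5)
Your argument is correct and is precisely the classical Kaufman energy method; the paper states Proposition~\ref{prop:kaufman} as a known result in Appendix~\ref{Appendix C} rather than reproving it, but it sketches the identical averaged-energy estimate $\int_{\mathbb{S}^1} I_\sigma(P_e\tilde\nu)\,d\tilde\mu(e) \lesssim I_\sigma(\tilde\nu)$ and the accompanying slab geometric inequality in the discussion of Corollary~\ref{cor:cor2.21}, Case~2. Your layer-cake computation, the role of the strict gap $\sigma < \sigma'$ in making the tail integral converge, and the resulting contradiction are all exactly as intended (one could also note, as the paper does in passing, that $E_\sigma(Y)$ is Borel so Frostman's lemma applies, but this is routine).
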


As is stated in OSW, this statement is formally equivalent to the following: if $\varnothing \neq X \subset \mathbb{S}^1$ and $Y\subset \R^2$ are Borel, then 
\begin{equation}\label{eqn:ortho}
\sup_{e\in X} \dim P_e(Y) \geq \min \{\dim X, \dim Y, 1\}.
\end{equation}
This formulation of Proposition \ref{prop:kaufman} is analogous to the way Theorem \ref{thm:mainthm1.1} is stated, with the key difference being here we are considering orthogonal projections, rather than radial projections.

In this space we provide a proof of the formal equivalence of these two statements.

\begin{proof}[Proof that \eqref{eqn:kaufman} is equivalent to \eqref{eqn:ortho}]
We start by proving that \eqref{eqn:kaufman} implies \eqref{eqn:ortho} under the stated hypotheses. Let $\varnothing \neq X \subset \mathbb{S}^1$ and $Y\subset \R^2$ be Borel. Furthermore, let $\sigma = \min \{\dim X, \dim Y, 1\}$. If $\sigma = 0$, this implies that 
\[
\sup_{e\in X} \dim P_e(Y) \geq \min\{\dim X, \dim Y, 1\} = 0.
\]
which is certainly true. Hence, we may assume that $\sigma >0$, and let $\epsilon \in (0,\sigma)$.

Now, given that $X\subset \mathbb{S}^1$, applying \eqref{eqn:kaufman}, we have 
\[
    \dim \{e\in X : \dim P_e(Y) < \sigma - \epsilon\} \leq \dim \{e\in \mathbb{S}^1 : \dim P_e(Y) < \sigma - \epsilon\} \leq \sigma - \epsilon.
\]
(This is a Borel set\textemdash in fact, a $G_{\delta\sigma}$ set\textemdash by \cite{kaufman1968hausdorff} Statement A.) By definition of $\sigma$, $\sigma - \epsilon < \dim X$. This implies that for all $\epsilon \in (0,\sigma)$, $$X\not \subset \{e\in \mathbb{S}^1 : \dim P_e(Y) < \sigma - \epsilon\}.$$ Therefore, for all $\epsilon$ in this range, we have that there exists an $e_\epsilon \in X$ such that $\dim P_{e_\epsilon}(Y) \geq \sigma - \epsilon$. Hence,
\[
\sup_{e\in X} \dim P_e(Y) \geq \min \{\dim X, \dim Y, 1\}
\]
as desired.

We now show that \eqref{eqn:ortho} implies \eqref{eqn:kaufman} under the stated assumptions. Let $Y\subset \R^2$ be Borel, and let $0 \le \sigma \le \min\{\dim Y, 1\}$. If $\sigma = 0$, \eqref{eqn:kaufman} is immediate. Hence, suppose $0 < \sigma \leq \min \{\dim Y, 1\}$, let $\epsilon \in (0,\sigma)$, and define 
\[
X_\epsilon = \{e \in \mathbb{S}^1 : \dim P_e(Y) \leq \sigma - \epsilon\}.
\]
Since $E_\sigma(Y) = \bigcup_{\epsilon>0}X_\epsilon$, it suffices to prove for each $\epsilon\in(0,\sigma)$ that $\dim X_\epsilon \leq \sigma$. Suppose for the sake of contradiction that $\dim X_\epsilon>\sigma$. Since $X_\epsilon\ne\varnothing$, by  \eqref{eqn:ortho}, we have
\begin{equation}\label{eq:formal}
\sup_{e\in X_\epsilon} \dim P_e(Y) \geq \min \{\dim X_\epsilon,\dim Y, 1\}.
\end{equation}
% We break this into cases.
We divide the remainder of the proof into cases based on the value of the right-hand side of this inequality.

\begin{enumerate}[leftmargin=2cm]
    \item[\textsc{Case 1.}] 
    Suppose $\dim X_\epsilon \leq \min \{\dim Y, 1\}.$ By \eqref{eq:formal}, $\sup_{e\in X_\epsilon}\dim P_e(Y) \ge \dim X_\epsilon$. Furthermore, for each $e\in X_\epsilon$, $\dim P_e(Y) \le \sigma-\epsilon$, so by our counter-assumption $\dim X_\epsilon > \sigma$ we have
    \[
    \sigma < \dim X_\epsilon \leq \sup_{e\in X_\epsilon} \dim P_e(Y) \leq \sigma - \epsilon.
    \]
    This is a contradiction.
    \item[\textsc{Case 2.}] Now suppose $\dim Y \leq \min \{\dim X_\epsilon, 1\}$. By the definition of the supremum, we have that for $0 < \epsilon_1 \leq \epsilon$, there exists $e_{\epsilon_1} \in X_\epsilon$ such that 
    \begin{align*}
        \dim Y - \epsilon_1 &< \dim P_{e_{\epsilon_1}}(Y) \\
        &\leq \sigma - \epsilon \\
        &< \dim Y - \epsilon.
    \end{align*}
    The last line is due to the fact that $\sigma < \min \{\dim Y, 1\}$. Therefore, we have that $\epsilon <\epsilon_1$, which is a contradiction.
    \item[\textsc{Case 3.}] Finally, consider the case when $1 \leq \min \{\dim X_\epsilon, \dim Y\}$. Then, by \eqref{eq:formal}, and the fact that $\sigma \leq 1$, we have that 
    \[
    1\leq \sup_{e\in X_\epsilon}\dim P_e(Y) \leq \sigma - \epsilon < 1.
    \]
    which is a contradiction.
\end{enumerate}
This finishes the proof that $\dim X_\epsilon \le \sigma$. As $\epsilon$ was arbitrary, it finishes the proof.
\end{proof}

\bibliographystyle{plain}
\bibliography{references}

\end{document}